 \newtheorem{theorem}{Theorem}[section]
 \newtheorem{corollary}[theorem]{Corollary}
 \newtheorem{lemma}[theorem]{Lemma}
 \newtheorem{proposition}[theorem]{Proposition}
 \theoremstyle{definition}
 \newtheorem{definition}[theorem]{Definition}
 \theoremstyle{remark}
 \newtheorem{remark}[theorem]{Remark}
 \newtheorem{example}[theorem]{Example}
 \numberwithin{equation}{section}
\begin{document}
\begin{center}\Large{\textbf{On groupoid graded von Neumann regular rings and 
a Brandt groupoid graded Leavitt path algebras}}
\let\thefootnote\relax\footnotetext{2020 \emph{Mathematics Subject 
Classification} 16W50, 16E50, 16S88\\
\emph{Key words and phrases.} Graded ring, Von Neumann regular ring, Leavitt path algebra}
\end{center}
\begin{center}\textbf{Emil Ili\'{c}-Georgijevi\'{c}}\end{center}
\begin{abstract}
Let $S$ be a partial groupoid, that is, a set with a partial binary operation. An 
$S$-graded ring $R$ is said to be graded von Neumann regular if $x\in xRx$ for 
every homogeneous element $x\in R.$ Under the assumption that $S$ is 
cancellative, we characterize $S$-graded rings which are graded von Neumann 
regular. If a ring is $S$-graded von Neumann regular, and if $S$ is cancellative, then 
$S$ is such that for every $s\in S,$ there exist $s^{-1}\in S$ and idempotent 
elements $e,$ $f\in S$ for which $es=sf=s,$ $fs^{-1}=s^{-1}e=s^{-1},$ 
$ss^{-1}=e$ and $s^{-1}s=f,$ which is a property enjoyed by Brandt groupoids. We 
observe a Leavitt path algebra of an arbitrary non-null directed graph over a unital 
ring as a ring graded by a Brandt groupoid over the additive group of integers 
$\mathbb{Z},$ and we prove that it is graded von Neumann regular if and only if its 
coefficient ring is von Neumann regular, thus generalizing the recently obtained result 
for the canonical $\mathbb{Z}$-grading of Leavitt path algebras.
\end{abstract}
\section{Introduction}\label{introduction}
Throughout the paper, all rings are assumed to be associative, and, unless otherwise 
stated, without a unity (non-unital). A ring with unity is said to be unital.\\
\indent Assigning various algebraic structures to directed graphs is widely present in
the literature (see \cite{avk,aasm}). Given a field $K$ and a directed graph $E,$ a 
specific $K$-algebra associated to $E$ can be constructed, called the 
\emph{Leavitt path algebra}, denoted by $L_K(E).$ Leavitt path algebras are 
introduced independently in \cite{aap,amp} as algebraic analogues of graph 
$C^*$-algebras. One obtains \emph{Leavitt algebras of type} $(1,n)$ \cite{wgl} as a 
particular case, see for instance \cite{aasm}. Leavitt path algebras over coefficient 
rings other than fields have been considered as well. Commutative unital rings are 
considered in \cite{mt}, the ring of integers in \cite{js}, and Leavitt path algebras 
over arbitrary unital rings in \cite{rh1}.\\
\indent Although introduced relatively recently, 
Leavitt path algebras have received a lot of attention, see for instance 
\cite{ga,aasm} and references therein. In particular, it is of interest to relate various 
combinatorial properties of the graph $E$ with the algebraic properties of the Leavitt 
path algebra of $E$ (as for instance in \cite{hv}). It is also of interest to investigate 
various algebraic properties of the Leavitt path algebra $L_R(E)$ with respect to the 
correspondig properties of the coefficient ring $R$ (as for instance in \cite{rh}).\\ 
\indent Recall that a ring $R$ is said to be \emph{von Neumann regular} if 
$x\in xRx$ for every $x\in R.$ It is proved in \cite{ar} that $L_K(E)$ is 
von Neumann regular if and only if $E$ is acyclic. If $R$ is a $G$-graded ring,
where $G$ is a group with identity $e,$ we recall that $R$ is said to be
\emph{graded von Neumann regular} \cite{noy2} if every homogeneous element 
$x\in R$ belongs to $xRx.$ (Note that the graded von Neumann regularity can be 
defined the same way for any grading set.) As it is well-known, 
since $G$ is a group, this is equivalent to the existence of 
a homogeneous element $y\in R$ such that $x=xyx.$ Now, for any ring $R$ and a
directed graph $E,$ the Leavitt path algebra $L_R(E)$ is equipped with a natural 
$\mathbb{Z}$-grading, induced by the lengths of the paths of 
$E,$ where $\mathbb{Z}$ denotes the additive group of integers, see for instance 
\cite{dl}. If $K$ is a field, it was reasonable to ask 
what could be said of the $\mathbb{Z}$-graded von Neumann regularity of 
$L_K(E),$ that is, whether it is true that for every homogeneous element 
$x\in L_K(E)$ there exists a homogeneous element $y\in L_K(E)$ such that 
$x=xyx.$ This question was raised and answered in the affirmative in \cite{rh}.
\begin{theorem}[Theorem~10 in \cite{rh}]\label{theoremrh}
For a field $K$ and a directed graph $E,$ the $\mathbb{Z}$-graded Leavitt path
algebra $L_K(E)$ is graded von Neumann regular.
\end{theorem}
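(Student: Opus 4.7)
My plan is to reduce the verification of graded von Neumann regularity to an explicit computation inside a finite-dimensional graded matrix algebra over $K$. First, using the $*$-involution of $L_K(E)$, which reverses the $\mathbb{Z}$-grading and satisfies $(xyx)^{*}=x^{*}y^{*}x^{*}$, I would observe that $x=xyx$ holds for some homogeneous $y$ of degree $-n$ if and only if $x^{*}=x^{*}y^{*}x^{*}$. Hence we may assume $n\geq 0$.

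Next, I would write $x$ in standard normal form
$$x=\sum_{i=1}^{m}k_{i}\alpha_{i}\beta_{i}^{*},$$
where the $\alpha_{i},\beta_{i}$ are paths of $E$ with $r(\alpha_{i})=r(\beta_{i})$ and $|\alpha_{i}|-|\beta_{i}|=n$, $k_{i}\in K^{*}$, and the pairs $(\alpha_{i},\beta_{i})$ are distinct. The key manipulation is the Cuntz-Krieger relation $v=\sum_{e:\,s(e)=v}ee^{*}$ at a regular vertex $v$, which gives $\alpha\beta^{*}=\sum_{e}(\alpha e)(\beta e)^{*}$ whenever $r(\alpha)=r(\beta)=v$ is regular. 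Iterating this on summands whose range vertex is regular, one arrives at a representation of $x$ in which the summands fall into finitely many blocks, each block consisting of monomials sharing a common terminal vertex $v_{j}$ and common lengths $|\alpha|=p_{j}+n$, $|\beta|=p_{j}$. Summands whose range vertices are sinks or infinite emitters remain as they are, being already terminal in this sense.

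Within a fixed block corresponding to $(v_{j},p_{j})$, with $\alpha,\beta$ ranging over a finite list of paths of the prescribed lengths ending at $v_{j}$, the monomials satisfy $(\alpha\beta^{*})(\alpha'\beta'^{*})=\delta_{\beta,\alpha'}\alpha\beta'^{*}$ whenever the relevant lengths match; consequently the $K$-linear span of such $\alpha\beta^{*}$ is (the degree-$n$ component of) a finite-dimensional graded matrix algebra over $K$. In a finite-dimensional graded matrix algebra over a field, the existence of a homogeneous pseudo-inverse in the opposite degree is standard linear algebra, for instance by putting the coefficient matrix in rank-normal form by Gaussian elimination. Assembling the pseudo-inverses produced block by block yields a homogeneous element $y\in L_{K}(E)_{-n}$, and the different blocks do not interfere in the product $xyx$ because for distinct terminal vertices $v_{j}\neq v_{j'}$ and matching path lengths one has $\alpha^{*}\alpha'=0$.

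I expect the main obstacle to be executing the extension step rigorously in the presence of non-regular vertices. When $r(\alpha_{i})$ is a sink, the monomial $\alpha_{i}\beta_{i}^{*}$ cannot be further rewritten; when $r(\alpha_{i})$ is an infinite emitter, the expression $\sum_{e}(\alpha_{i}e)(\beta_{i}e)^{*}$ is not a finite sum and so (CK2) is unavailable. In both cases one must show that the corresponding monomial already sits in its own finite matrix block, and that this block is mutually orthogonal, in the precise sense required to glue the block-wise pseudo-inverses, to the blocks produced by extending the other monomials. Once this organizational matter is settled, the pseudo-inverse computation inside each block is entirely formal.
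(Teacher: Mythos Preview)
Your direct, element-wise route differs from the paper's, which deduces Theorem~\ref{theoremrh} from Theorem~\ref{theoremmain}: one shows that $L_K(E)$ is nearly epsilon-strongly graded (Theorem~\ref{theoremnesg}), that each idempotent component is ultramatricial and hence von Neumann regular (Lemma~\ref{lemmafdg} via Proposition~\ref{propositiond}), and then applies the general criterion Theorem~\ref{theoremm}; arbitrary graphs are reached as direct limits of finite ones (Proposition~\ref{propositionm}).

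Your block scheme is essentially correct for row-finite graphs, and the orthogonality you need does hold when the short block terminates at a sink, since a path ending at a sink cannot be a proper initial subpath of a longer path. The genuine gap is at infinite emitters, where the claimed orthogonality is false. Let $E$ have a single vertex $v$ carrying infinitely many loops $e_1,e_2,\dots$, so that (CK2) is unavailable at $v$. For $x=v+e_1e_1^{*}\in(L_K(E))_0$ your blocks are $(v,0)$ and $(v,1)$, with block pseudo-inverses $v$ and $e_1e_1^{*}$, giving $y=x$; but then $xyx=x^{3}=v+7\,e_1e_1^{*}\neq x$ over $\mathbb{Q}$. The cross terms survive because the length-$0$ path $v$ is a proper initial subpath of $e_1$, so $v\cdot e_1e_1^{*}=e_1e_1^{*}\neq0$. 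A correct pseudo-inverse, $y=v-\tfrac{1}{2}e_1e_1^{*}$, genuinely mixes your two blocks. The remedy is precisely the refinement encoded in Proposition~\ref{propositiond} (cf.\ Proposition~2.1.14 in \cite{aasm}): the honest system of matrix units must include the corrected elements $\mu\bigl(v-\sum_{\alpha\in Q}\alpha\alpha^{*}\bigr)\eta^{*}$ for finite $Q\subseteq s^{-1}(v)$; in the example the orthogonal idempotents are $v-e_1e_1^{*}$ and $e_1e_1^{*}$, and $x=(v-e_1e_1^{*})+2\,e_1e_1^{*}$ is diagonal with respect to them. Alternatively, first carry out your argument for row-finite graphs, where it goes through as stated, and then pass to arbitrary $E$ by a direct-limit argument, which is what both \cite{rh} and the present paper do.
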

Let us mention that, recently, in \cite{lv1,lv2}, a $\mathbb{Z}$-graded unit regular 
Leavitt path algebra $L_K(E)$ is characterized in terms of the properties 
of $E,$ as well as the other graded cancellation properties of $L_K(E),$ 
including the \emph{graded cleanness property} from \cite{igs} (see also 
\cite{eig11,eig7}).\\
\indent Let us return to graded von Neumann regularity. If $G$ is a group with 
identity $e$ and $R$ is a strongly $G$-graded ring with unity, then, it is well-known 
by Corollary~C.I.1.5.3 in \cite{noy2} that $R$ is graded von Neumann regular if and 
only if $R_e$ is von Neumann regular. This result was established by using Dade's 
theorem, and later on, Theorem~3 in \cite{hy} provided us with a direct, 
element-wise proof of this fact. Non-unital graded von Neumann regular rings are 
studied in \cite{rh} in order to get Theorem~\ref{theoremrh}, which is 
recently extended in \cite{dl} to the Leavitt path algebras over arbitrary 
unital rings with the help of a generalization of Theorem~3 in \cite{hy}. Namely, the 
following characterization holds.
\begin{theorem}[Theorem~1.4 in \cite{dl}]\label{theoremdl}
Let $R$ be a unital ring and $E$ a directed graph, which is not null. Then, the 
$\mathbb{Z}$-graded Leavitt path algebra $L_R(E)$ is graded von 
Neumann regular if and only if $R$ is von Neumann regular.
\end{theorem}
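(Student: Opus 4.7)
The plan is to prove the two implications separately. The $(\Rightarrow)$ direction is the easier half, transferring graded regularity of $L_R(E)$ down to ordinary regularity of $R$ via a single vertex. The $(\Leftarrow)$ direction is the substantive content and requires an explicit element-wise construction in the spirit of Theorem~3 in \cite{hy} and its extensions in \cite{rh,dl}.

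For $(\Rightarrow)$, since $E$ is not null it contains a vertex $v$, and the map $r\mapsto rv$ identifies $R$ with a subring $Rv\subseteq L_R(E)$ concentrated in degree $0$. Given $r\in R$, I would pick a homogeneous $y$ satisfying $rv=rvyrv$; comparing degrees on the two sides forces $\deg y=0$, and after replacing $y$ by $vyv$ we may assume $y\in vL_R(E)_0v$. Expanding $y$ in the normal form of \cite{rh1} as an $R$-linear combination $y=cv+\sum_{\alpha\beta^{*}\neq v}c_{\alpha\beta}\,\alpha\beta^{*}$ of reduced monomials with $s(\alpha)=s(\beta)=v$ and $|\alpha|=|\beta|$, and using that elements of $R$ commute with vertices and edges, one computes $(rv)y(rv)=rcr\cdot v+\sum rc_{\alpha\beta}r\,\alpha\beta^{*}$. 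Comparing this with $rv=r\cdot v$ via uniqueness of the normal form yields $r=rcr$, which is precisely von Neumann regularity of $R$ with witness $c$.

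For $(\Leftarrow)$, given a nonzero homogeneous $x\in L_R(E)$ of degree $n$, I would start with the normal form $x=\sum_{i=1}^{k}r_i\alpha_i\beta_i^{*}$ with $|\alpha_i|-|\beta_i|=n$, and then use the CK2 relation $\sum_{s(e)=v}ee^{*}=v$ at regular vertices to progressively rewrite $x$ so that all the $\beta_i$'s have a common length $m$ and pairwise distinct ranges. After this rewriting the coefficients $r_i$ assemble into a rectangular matrix $M$ over $R$, and since $R$ is von Neumann regular so is the matrix ring $M_k(R)$; a regular inverse of $M$, read back as the coefficients of a homogeneous element $y$ of degree $-n$, then yields $x=xyx$ after applying CK1 and CK2 to collapse the resulting product.

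The main obstacle is the rewriting step in $(\Leftarrow)$. Extending paths to a common length via CK2 is only possible at regular vertices, so singular vertices (sinks and infinite emitters) must be isolated and handled separately. Moreover, keeping track of how summands with different source and range data combine, and ensuring that the matrix one constructs is genuinely of the shape that admits a regular inverse which then translates back into a valid $y$, requires careful bookkeeping; this is the technical heart of the arguments in \cite{rh,dl}, and any attempted proof must confront it.
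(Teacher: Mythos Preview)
Your $(\Rightarrow)$ direction matches the paper's: both embed $R$ into $L_R(E)$ via $r\mapsto rv$ for a vertex $v$ and extract ordinary regularity of $r$ from graded regularity of $rv$. The paper defers the final coefficient extraction to Lemma~4.5 of \cite{dl}; your normal-form comparison reaches the same conclusion, provided the chosen normal form has $v$ itself as a basis monomial (as in the standard bases of \cite{aasm,rh1}).

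For $(\Leftarrow)$ the routes diverge. The paper does not construct $y$ element-wise. It derives Theorem~\ref{theoremdl} as the one-idempotent case of its main result, Theorem~\ref{theoremmain}, and that is proved structurally: Theorem~\ref{theoremm} reduces graded regularity to the two conditions ``nearly epsilon-strongly graded'' plus ``each ring component $R_e$ is von Neumann regular''; $L_R(E)$ is shown to satisfy the first by Theorem~\ref{theoremnesg}, and for finite $E$ the second follows by exhibiting each $(L_R(E))_e$ as a direct limit of matricial $R$-subalgebras (Proposition~\ref{propositiond} and Lemma~\ref{lemmafdg}); arbitrary graphs are then handled by a direct-limit argument over finite complete subgraphs. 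Your approach is instead a direct computation in the spirit of Hazrat's field-case argument in \cite{rh}, upgraded by the observation that matrix rings over a von Neumann regular $R$ remain regular. It can be made to work: once all $\beta_i$ have a common length, the monomials $\alpha\beta^*$, grouped by their common range vertex $r(\alpha)=r(\beta)$, behave as matrix units, and a von Neumann inverse of each resulting rectangular block over $R$ assembles into the desired homogeneous $y$. One correction: the rewriting via CK2 gives the $\beta_i$ a common \emph{length}, not ``pairwise distinct ranges''; you must then partition by range vertex and run the matrix trick blockwise. The paper's structural route avoids the singular-vertex bookkeeping you rightly flag as the main obstacle, trading it for the (also non-trivial) ultramatricial description of the identity components; your route is more explicit and self-contained within $L_R(E)$, at the cost of precisely that bookkeeping.
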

Now, let $K$ be a field and let $A_n$ be the oriented $n$-line graph with
$n$ vertices and $n-1$ edges:
\begin{center}\begin{tikzpicture}[node distance={20mm}]
\node(v_1){$v_1$};
\node(v_2)[right of=v_1]{$v_2$};
\node(v_3)[right of=v_2]{$v_3$};
\node(dots)[right of=v_3]{$\dots$};
\node(v_{n-1})[right of=dots]{$v_{n-1}$};
\node(v_n)[right of=v_{n-1}]{$v_n.$};
\draw[->] (v_1) -- node[midway, above]{$\alpha_1$} (v_2);
\draw[->] (v_2) -- node[midway, above]{$\alpha_2$} (v_3);
\draw[->] (v_3) -- node[midway, above]{$\alpha_3$} (dots);
\draw[->] (dots) -- node[midway, above]{$\alpha_{n-2}$} (v_{n-1});
\draw[->] (v_{n-1}) -- node[midway, above]{$\alpha_{n-1}$} (v_n);
\end{tikzpicture}\end{center}
It is well-known, and easy to check, that $L_K(A_n)$ is isomorphic to the full matrix 
ring $\mathbb{M}_n(K)$ (see Proposition~1.3.5 in \cite{aasm}). The
isomorphism is given by mapping each vertex $v_i$ to the standard matrix unit
$e_{i,i},$ and by mapping the edges $\alpha_i$ and the corresponding ghost edges 
$\alpha_i^*$ to $e_{i,i+1}$ and $e_{i+1,i},$ respectively.\\
\indent It is also well-known that $\mathbb{M}_n(K)$ can be observed as a 
$\mathbb{Z}$-graded ring $\bigoplus_{k\in\mathbb{Z}}(\mathbb{M}_n(K))_k,$ 
where $(\mathbb{M}_n(K))_k$ consists of matrices $(a_{i,j})$ for which 
$a_{i,j}=0$ whenever $i,j$ are such that $i-j\neq k,$ for $-(n-1)\leq k\leq n-1,$ and 
$(\mathbb{M}_n(K))_k$ is the zero matrix for $|k|\geq n.$ Moreover, the 
above isomorphism gives rise to a $\mathbb{Z}$-graded isomorphism between 
$L_K(A_n)$ and $\mathbb{M}_n(K).$ However, $\mathbb{M}_n(K)$ can be 
observed as a ring graded by a partial groupoid. In this article, by a groupoid we
mean a set with a binary operation.\\
\indent Let $(\mathbb{M}_n(K))_{(i,j-i,j)}$ be a subset of $\mathbb{M}_n(K)$ 
which consists of matrices with an entry from $K$ in the $(i,j)$ position and zeroes
elsewhere, where $i,j=1,\dots,n.$ Then, for the additive group of 
$\mathbb{M}_n(K)$ we have that 
$\mathbb{M}_n(K)=\bigoplus_{i,j=1}^n(\mathbb{M}_n(K))_{(i,j-i,j)},$ and,
moreover, $(\mathbb{M}_n(K))_{(i,j-i,j)}(\mathbb{M}_n(K))_{(k,l-k,l)}=\delta_{j,k}
(\mathbb{M}_n(K))_{(i,l-i,l)},$ where $\delta_{j,k}$ is the Kronecker delta. This 
induces a partial operation on the set $S=\{(i,j-i,j)\ |\ i,j=1,\dots,n\}$ given by
$(i,j-i,j)\cdot(k,l-k,l)=(i,l-i,l)$ if $j=k$ (see \cite{ak}). In fact, $S$ is a Brandt 
groupoid. For a vertex $v_i$ of $A_n,$ let us put $w(v_i)=(i,0,i)\in S,$ for an edge 
$\alpha_i,$ put $w(\alpha_i)=(i,1,i+1),$ and for the corresponding ghost edge 
$\alpha_i^*,$ put $w(\alpha_i^*)=(i+1,-1,i).$ Then, it is easy to see that 
$L_K(A_n)$ is also an $S$-graded ring 
$\bigoplus_{(i,j-i,j)\in S}(L_K(A_n))_{(i,j-i,j)},$ where $(L_K(A_n))_{(i,j-i,j)}$ is the 
$K$-linear span of monomials $\mu\eta^*,$ where $\mu,\eta$ are paths in $A_n$ 
such that $w(\mu\eta^*)=(i,j-i,j).$ With thus defined grading, the above 
isomorphism tells us that $L_K(A_n)$ and $\mathbb{M}_n(K)$ are also graded 
isomorphic as $S$-graded rings.\\
\indent As we know, since $K$ is von Neumann regular, 
$\mathbb{M}_n(K)$ is von Neumann regular. In particular, for every homogeneous 
element $A$ of an $S$-graded ring $\mathbb{M}_n(K),$ there exists 
an element $A'\in\mathbb{M}_n(K),$ which is homogeneous, such that $A=AA'A.$ 
Hence, $L_K(A_n)$ is graded von Neumann regular as an $S$-graded ring. Note that 
$S$ is cancellative and that it is such that for every $s\in S,$ there exist 
$s^{-1}\in S$ and idempotent elements $e,$ $f\in S$ for which $es=sf=s,$ 
$fs^{-1}=s^{-1}e=s^{-1},$ $ss^{-1}=e$ and $s^{-1}s=f,$ in which case we say in
this article that $S$ satisfies the (LRI) condition (existence of left and right 
inverses). A cancellative groupoid $S$ needs to be such in order for an $S$-graded 
ring to be graded von Neumann regular (Proposition~\ref{proposition2}).\\
\indent Let $B$ be an arbitrary Brandt groupoid over $\mathbb{Z}.$ The 
given example triggers out the following natural questions.\\ 
\indent \emph{Given an arbitrary directed graph $E,$ which is not null, and a unital 
von Neumann regular ring $R,$ is it possible to observe $L_R(E)$ as a ring graded by 
$B$ so that $L_R(E)$ is $B$-graded von Neumann regular?}\\
\indent \emph{Conversely, if $L_R(E)$ is $B$-graded von Neumann regular, is $R$ 
von Neumann regular?}\\
\indent Theorem~\ref{theoremmain} answers these questions in the affirmative, thus
generalizing Theorem~\ref{theoremdl}. The Brandt groupoid gradings on Leavitt path 
algebras of the desired kind are explained in Section~\ref{gleavitt}. The approach for
proving Theorem~\ref{theoremmain}, inspired by that of \cite{rh}, is similar to the
approach taken in the proof of Theorem~\ref{theoremdl}. Namely, the result is first 
established for finite graphs, and then, for arbitrary graphs by observing the Leavitt 
path algebra as a direct limit of Leavitt path algebras over finite graphs.\\
\indent In category theoretic terminology, Brandt groupoids are connected small 
categories all of whose morphisms are invertible, which are also known simply as 
groupoids. However, in this article, as we have already pointed out, by a groupoid we 
mean a set with a binary operation, which is also known in the literature as magma. 
Let $K$ be a field and $E$ a directed graph. The Leavitt path algebra $L_K(E)$ has 
already been observed as a ring graded by a small category all of whose morphisms 
are invertible via partial actions. Namely, in \cite{gy}, the Leavitt path algebra 
$L_K(E)$ is realized as a partial skew groupoid ring 
$D(X)\star_\lambda\mathrm{G}(E),$ where $D(X)$ is a certain $K$-algebra and 
$\lambda$ is a partial action of the free path groupoid $\mathrm{G}(E)$ on $D(X).$ 
Under some restrictions, in \cite{bmp}, the $\mathrm{G}(E)$-graded von Neumann 
regularity of $L_K(E)$ is characterized in terms of $D(X).$ Namely, let $E$ be a 
connected graph and let the set of vertices $E^0$ be finite. Then, $L_K(E)$ is 
$\mathrm{G}(E)$-graded von Neumann regular if and only if $D(X)$ is 
von Neumann regular, provided that the characteristic of the field $K$ does not 
divide $|E^0|1_K.$\\
\indent One of the key notions used in obtaining Theorem~\ref{theoremdl} is the 
notion of a \emph{nearly epsilon-strongly group graded ring}, which is 
introduced in \cite{nop} as a generalization of an \emph{epsilon-strongly group 
graded ring} \cite{nop}, class of which, in turn, contains all strongly group graded 
unital rings. Let $G$ be a group with identity $e.$ According to Theorem~1.2 in 
\cite{dl}, we have that $R=\bigoplus_{g\in G}R_g$ is graded von Neumann regular if 
and only if $R$ is nearly epsilon-strongly graded and $R_e$ is von Neumann regular. 
As a corollary, one also obtains that an epsilon-strongly graded ring 
$R=\bigoplus_{g\in G}R_g$ is graded von Neumann regular if and only if $R_e$ is 
von Neumann regular (Corollary~3.11 in \cite{dl}). For results concerning rings 
graded by finite small categories all of whose morphisms are invertible, the reader is 
referred to \cite{nop1,bmp}.\\
\indent In order to prove Theorem~\ref{theoremmain}, and also, as one of the aims 
of this paper, we study the graded von Neumann regularity of rings which are not 
necessarily graded by a group or by a small category all of whose morphisms are
invertible, which is the content of Section~\ref{gradedvnr}. More precisely, we are 
interested in rings graded in the sense of the following definition, which, in particular, 
includes all the other gradings. 
\begin{definition}[\cite{ak2,ak}]\label{sgris} Let $R$
be a ring, and $S$ a partial groupoid, that is, a set with a
partial binary operation. Also, let $\{R_s\}_{s\in S}$ be a family of additive 
subgroups of $R,$ called \emph{components.} We say that
$R=\bigoplus_{s\in S}R_s$ is $S$-\emph{graded}
and $R$ \emph{induces} $S$ (or $R$ is an $S$-\emph{graded
ring inducing} $S$) if the following two conditions hold:
\begin{enumerate}
    \item[$i)$] $R_s R_t\subseteq R_{st}$ whenever $st$ is defined;
    \item[$ii)$] $R_s R_t\neq0$ implies that the product $st$ is
    defined.
\end{enumerate}
The set $H_R=\bigcup_{s\in S}R_s$ is called the \emph{homogeneous part of $R,$} 
and elements of $H_R$ are called \emph{homogeneous elements of $R.$}
\end{definition}
This definition applies to both associative and nonassociative rings. For the results on 
nonassociative rings graded by a set, we refer the reader to \cite{ek,vk,ajc} and 
references therein. Note that the associativity of an $S$-graded ring inducing $S$ 
does not imply the associativity in $S.$ As examples of $S$-graded rings inducing 
$S,$ let us mention a semidirect extension of a ring (in particular, the Dorroh 
extension), generalized matrix rings (in particular, the already given example of
$\mathbb{M}_n(K)$), every group or semigroup graded ring, see \cite{ak}, etc.\\
\indent We finish the article by applying Theorem~\ref{theoremmain} in order to 
show that the Leavitt path algebra over a unital von Neumann regular ring enjoys 
some properties which represent groupoid graded counterparts of those satisfied by 
von Neumann regular rings. In particular, if $R$ is von Neumann regular, then one 
type of the graded Jacobson radical of $L_R(E)$ is zero, and $L_R(E)$ is graded 
semiprime as a Brandt groupoid graded ring (Theorem~\ref{theoremgs}).
\section{Preliminaries}\label{preliminaries}
\subsection{Graded rings}
Let $R=\bigoplus_{s\in S}R_s$ be an $S$-graded ring inducing $S.$
\emph{The degree} $\deg(a)$ of a nonzero homogeneous element $a$ of $R$ is 
defined to be a unique $s\in S$ such that $a\in R_s.$ We define $0=\deg(0),$ and
we may without loss of generality assume that $0\in S$ since the zero element of
$R$ can be viewed as a component of $R.$ We may also assume that 
$S\setminus\{0\}=\{s\in S\ |\ R_s\neq0\}.$ Hence $R_0=0,$ the zero subring 
of $R.$ Throughout the article, we make $S$ a groupoid by putting $st=0$ for those 
pairs $(s,t)\in S\times S$ for which the product $st$ is not originally defined 
(in which case $R_sR_t=0$). Also, $s0=0s=0$ for every $s\in S.$ This is done for 
every $S$-graded ring inducing $S$ without further notice. We set 
$S^\times=S\setminus\{0\}.$ It is clear that
$R=\bigoplus_{s\in S}R_s=\bigoplus_{s\in S^\times}R_s.$
\begin{remark}\label{remarkassoc}
Note that for $s,$ $t,$ $u\in S^\times,$ if $R_sR_tR_u\neq0,$ then $(st)u=s(tu).$
In that case, as usual, we write this element as $stu.$
\end{remark}
Throughout the article, a groupoid $S$ with zero $0$ is said to be 
\emph{cancellative} if for $s,$ $t,$ $u\in S,$ each of the equalities 
$0\neq su=tu\in S$ or $0\neq us=ut\in S$ implies $s=t.$ Also, the set of
all idempotent elements of $S$ is denoted by $I(S).$ By $I(S)^\times$ we denote
the set $I(S)\setminus\{0\}.$\\
\indent If $R=\bigoplus_{s\in S}R_s$ is an $S$-graded ring inducing $S,$ then $R$ is 
said to be \emph{strongly graded} if $R_sR_t=R_{st}$ for every $s,$ $t\in S.$\\
\indent We also note that the notions of an $S$-graded ring inducing $S$ and of a 
graded ring studied in \cite{ha1,ha,agg} are equivalent.\\
\indent Let $R=\bigoplus_{s\in S}R_s$ be an $S$-graded ring inducing $S.$ A right 
(left, two-sided) ideal $I$ of $R$ is said to be \emph{homogeneous} if 
$I=\bigoplus_{s\in S}R_s\cap I.$ Also recall that if $I$ is a homogeneous ideal
(two-sided) of $R$ and $I_s=R_s\cap I,$ then $R/I=\bigoplus_{s\in S}R_s/I_s$ is an 
$S$-graded ring inducing $S$ \cite{ha,agg,ak3,ak}.\\
\indent If $R$ is an $S$-graded ring inducing $S$ and $R'$ an $S'$-graded ring
inducing $S',$ then a ring homomorphism $f: R\to R'$ is said to be 
\emph{homogeneous} \cite{ha1,ha,agg} if $f(H_R)\subseteq H_{R'}$ and if for $x,$ 
$y\in H_R$ such that $f(x),$ $f(y)\neq0,$ we have that $\deg(f(x))=\deg(f(y))$  
implies that $\deg(x)=\deg(y).$ It is easy to verify that the $S$-graded rings 
inducing $S,$ together with the homogeneous homomorphisms form a category. In 
particular, if $S$ is fixed, we denote such category by $S\operatorname{-RING}.$\\ 
\indent The category $S\operatorname{-RING}$ has arbitrary direct limits. Namely,
let $\{R_i\ |\ i\in I\}$ be a direct system of $S$-graded rings, where
$R_i=\bigoplus_{s\in S}(R_i)_s.$ Then it can be easily verified, like in the group
graded case, that $A=\underrightarrow{\lim}_iR_i$ is an $S$-graded ring with the
components $A_s=\underrightarrow{\lim}_i(R_i)_s.$\\
\indent The following lemma will be used in the proof of 
Theorem~\ref{theoremmain}. Its proof is similar to the group graded case (see
Proposition~5.2.14 in \cite{bk}) and therefore, omitted.
\begin{lemma}\label{lemmadlvnr}
Let $\{R_i\ |\ i\in I\}$ be a direct system of rings which are objects in 
$S\operatorname{-RING.}$ If $R_i$ is graded von Neumann regular as an object in
$S\operatorname{-RING}$ for every $i\in I,$ then the direct limit of 
$\{R_i\ |\ i\in I\}$ is graded von Neumann regular as an object in 
$S\operatorname{-RING}$ too.
\end{lemma}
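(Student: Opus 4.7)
The plan is to exploit the componentwise description of the direct limit in $S\operatorname{-RING}$ that is explicitly recorded in the paragraph preceding the lemma, namely $A_s=\underrightarrow{\lim}_i(R_i)_s$, and then to transport a regularity witness from some stage $R_i$ forward along the canonical homogeneous homomorphism $\phi_i\colon R_i\to A$. Concretely, I would argue: lift each homogeneous element of $A$ to some $R_i$, apply the hypothesis there, and push the equation into $A$.

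First, I would pick an arbitrary nonzero homogeneous element $a\in A_s$. By the componentwise direct-limit description, there exist $i\in I$ and $x\in(R_i)_s$ with $\phi_i(x)=a$. Since $R_i$ is graded von Neumann regular as an object of $S\operatorname{-RING}$, we have $x\in xR_ix$, so there exists $y\in R_i$, which I may take to be a finite sum of homogeneous components of $R_i$, with $x=xyx$. Applying $\phi_i$, which is a ring homomorphism sending $H_{R_i}$ into $H_A$, yields
\[
a=\phi_i(x)=\phi_i(xyx)=\phi_i(x)\,\phi_i(y)\,\phi_i(x)=a\,\phi_i(y)\,a\in aAa.
\]
Hence $a\in aAa$ for every homogeneous $a\in A$, which is exactly the graded von Neumann regularity of $A$.

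The only point that is not purely formal is the fact that every homogeneous element of $A$ is the $\phi_i$-image of a homogeneous element of some $R_i$. This is where the filtered nature of the colimit is used: the direct sum $\bigoplus_{s\in S}A_s$ is computed stage by stage as $A_s=\underrightarrow{\lim}_i(R_i)_s$, so elements of $A_s$ represent as $\phi_i(x)$ with $x\in(R_i)_s$ for some index $i$. This is the standard filtered-colimit argument that mirrors the group-graded case (Proposition~5.2.14 in \cite{bk}), and, as the author notes, its routine verification justifies omitting the proof. No substantive obstacle appears beyond this verification.
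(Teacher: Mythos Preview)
Your argument is correct and is precisely the standard filtered-colimit verification that the paper has in mind: the author omits the proof entirely, referring to Proposition~5.2.14 in \cite{bk} for the group-graded analogue, and your lift-witness-push argument is exactly that analogue transported to the $S$-graded setting via the componentwise description $A_s=\underrightarrow{\lim}_i(R_i)_s$. There is nothing to add or to compare.
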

\subsection{The graded Jacobson radical}
Throughout the article, the classical Jacobson radical of a ring $A$ is denoted as
usual by $J(A).$\\
\indent Let $R$ be an $S$-graded ring inducing $S$ and let us assume that $S$ is
cancellative. A homogeneous right ideal $I$ of $R$ is said to be a \emph{graded 
modular right ideal} \cite{ha1,ha} if there exists a homogeneous element $u\in R$ 
such that $ux-x\in I$ for every homogeneous element $x\in R.$ The cancellativity of 
$S$ gives that $\deg(u)$ is an idempotent element of $S,$ and that all such 
elements $u$ are of the same degree, which is referred to as \emph{the degree of 
$I.$}\\ 
\indent The \emph{graded Jacobson radical $J^g(R)$ of $R$} \cite{ha1,ha}
is the intersection of all graded maximal modular right ideals of $R.$ If $J^g(R)=0,$
then we say that $R$ is \emph{graded semisimple}. For the study
of other graded radicals of $S$-graded rings inducing $S,$ we refer the reader to
\cite{ak1,ak2,eig1,eig4,eig11,eig7,eig9} and references therein.\\
\indent Let $e$ be an idempotent element in $S^\times.$ There exists a one-to-one 
correspondence between the set of all graded maximal modular right ideals of $R$ of 
degree $e$ and the set of all maximal modular right ideals of the ring $R_e,$ given 
by $I\mapsto I\cap R_e,$ see \cite{ha1,ha}. As a corollary, one obtains that 
$J^g(R)=\bigoplus_{s\in S}I_s,$ where 
$I_s=\{x\in R_s\ |\ (\forall e\in I(S))\ xH_R\cap R_e\subseteq J(R_e)\}.$ It is then 
easy to verify the following statements. These represent one of the key 
ingredients we use in a characterization of specific kinds of graded von Neumann 
regular $S$-graded rings inducing $S$ (Proposition~\ref{propositiongs} and
Theorem~\ref{corollary}).
\begin{theorem}[\cite{ha1,ha}]\label{theoremj}
Let $R=\bigoplus_{s\in S}R_s$ be an $S$-graded ring inducing $S,$ where $S$ is
cancellative. Then:
\begin{itemize}
\item[$a)$] $J^g(R)\cap R_e=J(R_e)$ for all $e\in I(S);$
\item[$b)$] $J^g(R)=0,$ that is, $R$ is graded semisimple, if and only if the 
following two conditions are satisfied:
\begin{itemize}
\item[$i)$] $J(R_e)=0$ for every $e\in I(S),$ that is, each ring component of $R$ 
is semisimple;
\item[$ii)$] For every nonzero homogeneous element $x\in R$ there exists a
homogeneous element $y\in R$ such that $xy$ is a nonzero homogeneous element
of an idempotent degree;
\end{itemize}
\item[$c)$] Let $s\in S.$ If $R_s$ is not contained in the graded Jacobson 
radical $J^g(R),$ then there exist elements $e,$ $f\in I(S)$ and an element 
$s^{-1}\in S$ such that $es=sf=s,$ $fs^{-1}=s^{-1}e=s^{-1},$ $ss^{-1}=e,$ and 
$s^{-1}s=f.$
\end{itemize} 
\end{theorem}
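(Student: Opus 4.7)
The plan is to derive all three parts from the description
\[J^g(R)=\bigoplus_{s\in S}I_s,\quad I_s=\{x\in R_s\mid xH_R\cap R_e\subseteq J(R_e)\text{ for every }e\in I(S)\},\]
recalled just above the theorem, combined with the one-to-one correspondence between graded maximal modular right ideals of $R$ of degree $e$ and maximal modular right ideals of $R_e$, and with cancellativity of $S$.

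For part (a), the crucial step is to observe that any graded maximal modular right ideal $I$ of $R$ of degree $f\neq e$ contains $R_e$. Indeed, taking a modular element $u\in R_f$, for any homogeneous $x\in R_e$ the relation $ux-x\in I$ together with the fact that $ux$ lies in $R_{fe}$, a component distinct from $R_e$ (cancellativity forbids $fe=e$ when $f\neq e$), and the homogeneity of $I$ force both $x$ and $ux$ to lie in $I$ separately. Consequently only graded maximal modular right ideals of degree $e$ contribute non-trivially to $J^g(R)\cap R_e$, and the bijection identifies the resulting intersection with $J(R_e)$.

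Part (b) follows directly from the description of $J^g(R)$: since $J^g(R)=\bigoplus_s I_s$, the vanishing $J^g(R)=0$ is equivalent to $I_s=0$ for every $s$. In the forward direction (i) is (a) and (ii) is read off at any nonzero $x\in R_s\setminus I_s$ from the defining property of $I_s$. For the converse, a hypothetical nonzero $x\in I_s$ would, by (ii), admit a homogeneous $y$ with $xy$ nonzero and homogeneous of some idempotent degree $e$; but $x\in I_s$ together with $J(R_e)=0$ forces $xy\in xH_R\cap R_e\subseteq J(R_e)=0$, a contradiction.

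Part (c) is the technically involved step. From $R_s\not\subseteq J^g(R)$ pick $x\in R_s\setminus I_s$ and obtain a homogeneous $y$ of degree $t$ with $xy\in R_e\setminus J(R_e)$, where necessarily $e:=st\in I(S)$; setting $s^{-1}:=t$ gives $ss^{-1}=e$ immediately. To extract $es=s$, choose a maximal modular right ideal $M$ of $R_e$ missing $xy$ with right modular element $w\in R_e$; then $w(xy)-xy\in M$ forces $w(xy)\notin M$, so $w(xy)=(wx)y\neq 0$, hence $wx\in R_{es}$ is nonzero. Comparing degrees of $(wx)y\in R_e$ yields $(es)t=e=st$, and right cancellativity gives $es=s$. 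The symmetric argument with a maximal modular left ideal of $R_e$ missing $xy$ and a left modular element $w'$ produces $te=t$, i.e., $s^{-1}e=s^{-1}$. Finally, to secure an idempotent $f$ with $sf=s$, $s^{-1}s=f$, and $fs^{-1}=s^{-1}$, one iterates the modularity argument on the nonradical element $w(xy)\in R_e$ to extract nonzero triple products in $R_sR_tR_s$ and $R_tR_sR_t$, so that Remark~\ref{remarkassoc} applies and cancellativity forces $ts$ to be defined, idempotent, and to play the role of $f$. The principal obstacle is to guarantee these triple products do not collapse to zero, since $xy$ could a priori be nilpotent in $R_e$; the simultaneous use of right and left modular elements missing $xy$ (and products thereof, which also lie outside $J(R_e)$) is what supplies the required nonvanishing, after which all remaining identities in $S$ fall out from cancellativity.
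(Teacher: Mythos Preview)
The paper does not actually prove Theorem~\ref{theoremj}; it cites Halberstadt \cite{ha1,ha}, records the bijection $I\mapsto I\cap R_e$ and the formula $J^g(R)=\bigoplus_{s\in S}I_s$, and then simply writes ``It is then easy to verify the following statements.'' Your plan to derive everything from that description is therefore exactly the route the paper indicates, and your arguments for parts (a) and (b) are correct as written.

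Your sketch of (c), however, has a genuine soft spot. You assert that $w(xy)$ is ``nonradical'' and that ``products thereof \dots\ also lie outside $J(R_e)$''; neither is justified. All you know is $w(xy)\notin M$ and $(xy)w'\notin M'$, and from these two facts alone you cannot manufacture a nonzero element of $R_tR_s$ (e.g.\ $(yw')(wx)$ could vanish). What actually closes the gap is the simplicity of the right $R_e$-module $R_e/M$: since $xy+M\neq 0$ one checks $(xy+M)R_e=R_e/M$ (the alternative $(xy)R_e\subseteq M$ forces, via the modular identity $wr-r\in M$, that $M=R_e$), so there exists $r\in R_e$ with $(xy)r-w\in M$, whence $(xy)r(xy)\notin M$ and in particular $(xy)r(xy)\neq 0$. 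Writing $(xy)r(xy)=((xy)(rx))y=x((yr)(xy))=x(y(rx))y$ and using that $rx\in R_{es}$, $yr\in R_{te}$ are nonzero already yields $es=s$, $te=t$ by cancellation, and then $y(rx)\neq 0$ gives $R_tR_s\neq 0$, so $f:=ts$ is defined; the identities $sf=s$, $ft=t$ follow from Remark~\ref{remarkassoc} applied to the nonzero triple products $x\cdot y\cdot(rx)$ and $y\cdot(rx)\cdot y$. Iterating once more (the same $r$ gives $(xy)r(xy)r(xy)\neq 0$) produces a nonzero element of $R_fR_f$ and hence $f^2=f$ by cancellativity. So your strategy is sound, but the specific mechanism you name (left and right modular units and their products ``outside $J(R_e)$'') does not do the job; the missing idea is to pass to the simple quotient $R_e/M$ to obtain an $r$ with $(xy)r(xy)\neq 0$.
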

\begin{remark}
Note that there are graded rings $R$ in which $J(R)=0$ but $J^g(R)\neq0.$ For
instance, if $K$ is a field, the $\mathbb{N}_0$-graded polynomial ring 
$K[x]=\bigoplus_{n\in\mathbb{N}_0}Kx^n$ is such a ring, where $\mathbb{N}_0$
is the additive monoid of nonnegative integers.
\end{remark}
\subsection{Von Neumann regular rings}
Let us recall that a ring $R$ is said to be \emph{von Neumann regular} if for every 
$x\in R$ we have that $x\in xRx.$ A ring $R$ is said to be \emph{s-unital} if, for 
every $x\in R,$ there exist $e,$ $e'\in R$ such that $ex=x=xe'.$ It is clear
that every von Neumann regular ring is s-unital (Proposition~2.1 in \cite{dl}). The 
following proposition is established in \cite{dl}, and represents a generalization of a 
well-known characterization of von Neumann regularity for unital rings 
(Theorem~1.1 in \cite{krg}) to s-unital rings.
\begin{proposition}[Proposition~2.2 in \cite{dl}]\label{propositionvn}
Let $R$ be an s-unital ring. Then the following statements are equivalent:
\begin{itemize}
\item[$a)$] $R$ is von Neumann regular;
\item[$b)$] Every principal right (left) ideal of $R$ is generated by an idempotent
element;
\item[$c)$] Every finitely generated right (left) ideal of $R$ is generated by an
idempotent element.
\end{itemize}
\end{proposition}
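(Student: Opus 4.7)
The plan is to prove the standard cycle (a)~$\Rightarrow$~(b)~$\Rightarrow$~(c)~$\Rightarrow$~(a), adapting the classical Kaplansky-style argument for unital rings to the s-unital setting; the arguments for the right-ideal and left-ideal variants are symmetric, so I would write them out for the right side. The underlying observation used throughout is that, thanks to s-unitality, for every $x\in R$ there exists $e'\in R$ with $xe'=x$, whence $x\in xR$ and the principal right ideal generated by $x$ coincides with $xR$. This removes the distinction between $xR$ and $xR+\mathbb{Z}x$ that would otherwise be present in a non-unital setting, so (b) reads simply: for every $x\in R$ there is an idempotent $e$ with $xR=eR$.

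For (a)~$\Rightarrow$~(b), I would pick $y\in R$ with $x=xyx$, set $e=xy$, and verify directly that $e^{2}=xyxy=xy=e$, that $e=xy\in xR$, and that $x=ex\in eR$, yielding $xR=eR$. For (c)~$\Rightarrow$~(a), applying (c) to the singleton $\{x\}$ produces an idempotent $e$ with $xR=eR$; since $x\in xR=eR$ and $e\in xR$, there exist $s,r\in R$ with $x=es$ and $e=xr$, and then $ex=e^{2}s=es=x$ gives $x=ex=xrx\in xRx$.

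The bulk of the work is (b)~$\Rightarrow$~(c), which I would prove by induction on the number of generators, with the inductive step reducing to two generators. Given $xR+yR$, I apply (b) to get an idempotent $e$ with $xR=eR$, then apply (b) to the element $y-ey$ to get an idempotent $f$ with $(y-ey)R=fR$. Writing $f=(y-ey)s$ yields $ef=e(y-ey)s=(ey-e^{2}y)s=0$, and then a direct calculation shows that $g:=e+f-fe$ satisfies $g^{2}=g$ (the cross terms $efe$, $fef$, $fefe$ all vanish because $ef=0$), $ge=e$ and $gf=f$, so $eR+fR\subseteq gR$, while $g\in eR+fR$ gives the reverse inclusion; the decomposition $y=ey+(y-ey)$ together with $ey\in eR=xR$ then yields $gR=eR+fR=xR+yR$.

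The only potential obstacle is that the classical unital proof typically invokes $1-e$ to decompose $yR$ as $eyR+(1-e)yR$, which is unavailable here. The substitute $y=ey+(y-ey)$ does the same work, and the decisive identity $ef=0$ (which in the unital case is automatic from $f\in(1-e)yR$) must be extracted by hand from $f=(y-ey)s$; once this is in place, no global unit is ever needed, and the rest is routine verification relying only on s-unitality of the individual elements $x$, $y$, and $y-ey$ to ensure each sits inside its own right ideal.
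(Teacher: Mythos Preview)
Your argument is correct. Note, however, that the paper does not itself prove this proposition: it is quoted verbatim as Proposition~2.2 of \cite{dl} and used as a black box. That said, the paper \emph{does} prove the graded analogue (Proposition~\ref{propositionvnrg}), and the route taken there matches yours exactly: for $ii)\Rightarrow iii)$ one picks an idempotent $a$ with $xR=aR$, then an idempotent $b$ with $(y-ay)R=bR$, observes $ab=0$, sets $c=b-ba$ so that $a$ and $c$ are orthogonal idempotents with $cR=bR$, and concludes $xR+yR=(a+c)R$. Your $g=e+f-fe$ is precisely $a+c$ in that notation, and your observation that s-unitality forces $x\in xR$ (so that the principal right ideal is $xR$ rather than $xR+\mathbb{Z}x$) is exactly what replaces the unital hypothesis in the classical Goodearl/Kaplansky argument.
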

Since the Jacobson radical of a ring does not contain nonzero idempotent elements,
the following corollary is immediate.
\begin{corollary}[cf.~Corollary~1.2 in \cite{krg}]\label{corollaryj}
Let $R$ be an s-unital ring. If $R$ is von Neumann regular, then the Jacobson
radical $J(R)$ is zero.
\end{corollary}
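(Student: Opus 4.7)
The plan is to follow precisely the hint given in the excerpt just above the statement: use Proposition~\ref{propositionvn} together with the standard fact that $J(R)$ contains no nonzero idempotent. Concretely, take an arbitrary $x\in J(R)$; I want to show $x=0$.

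First I would apply Proposition~\ref{propositionvn}~$b)$ to the principal right ideal generated by $x$. Because $R$ is s-unital, there exists $e'\in R$ with $x=xe'$, so $x\in xR$, and the principal right ideal generated by $x$ is exactly $xR$. By hypothesis $R$ is von Neumann regular, hence $xR=eR$ for some idempotent $e\in R$. Since $J(R)$ is a two-sided ideal and $x\in J(R)$, we have $eR=xR\subseteq J(R)$, and in particular $e\in J(R)$.

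The next step is to verify that $e=0$. This is the classical argument: any idempotent $e$ in $J(R)$ is right quasi-regular, so there exists $y\in R$ with $e+y-ey=0$; multiplying on the left by $e$ and using $e^2=e$ yields $e+ey-ey=0$, i.e.\ $e=0$. Hence $xR=eR=0$, and since $x\in xR$ by s-unitality, we conclude $x=0$. Thus $J(R)=0$.

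The only subtlety is the appeal to s-unitality, which is used twice: once to guarantee $x\in xR$ so that Proposition~\ref{propositionvn}~$b)$ really produces an idempotent generator of the right ideal containing $x$, and once to conclude $x=0$ from $xR=0$. There is no genuine obstacle here; the corollary is immediate once Proposition~\ref{propositionvn} is in hand, which is exactly the claim of the excerpt.
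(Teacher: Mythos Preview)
Your proof is correct and follows exactly the approach the paper indicates: the paper simply states that the corollary is immediate from Proposition~\ref{propositionvn} together with the fact that the Jacobson radical contains no nonzero idempotents, and you have spelled out precisely those details. There is nothing to add.
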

\section{A Brandt groupoid graded von Neumann regularity of Leavitt path algebras}\label{gleavitt}
\indent Let us recall first the notion of the \emph{Leavitt path algebra} of a
directed graph over a unital ring.\\ 
\indent A \emph{directed graph} $E=(E^0,E^1,\mathrm{r},\mathrm{s})$ consists of 
two sets $E^0,$ $E^1$ and mappings $\mathrm{r},$ $\mathrm{s}:$ $E^1\to E^0.$ 
When there are more directed graphs observed at the same time, the mappings 
$\mathrm{r}$ and $\mathrm{s}$ of the graph $E$ are denoted by 
$\mathrm{r}_E$ and $\mathrm{s}_E,$ respectively. Elements of $E^0$ are called 
\emph{vertices} and elements of $E^1$ are called \emph{edges}. We say that $E$ 
is \emph{finite} if the cardinal number of $E^0$ is finite.\\
\indent A vertex $v$ for which $\mathrm{s}^{-1}(v)$ is empty is called a 
\emph{sink}, while a vertex $v'$ for which $\mathrm{r}^{-1}(v')$ is empty is called 
a \emph{source}. A vertex $v$ for which the cardinality of $\mathrm{s}^{-1}(v)$ is 
infinite, is called an \emph{infinite emitter}. A vertex $v,$ that is neither a sink nor 
an infinite emitter, is called a \emph{regular vertex}.\\
\indent The set of all sinks in $E$ is denoted by $\mathrm{Sink}(E),$ while the set
of all regular vertices in $E$ is denoted by $\mathrm{Reg}(E).$\\
\indent A \emph{path} $\mu$ in a graph $E$ is a sequence of edges 
$\mu=\alpha_1\dots\alpha_k$ such that 
$\mathrm{r}(\alpha_i)=\mathrm{s}(\alpha_{i+1}),$ $i=1,\dots,k-1,$
where $k\in\mathbb{N}.$ The \emph{length} of $\mu$ is equal to $k.$ We also 
put $\mathrm{s}(\mu):=\mathrm{s}(\alpha_1),$ and call it the \emph{source of 
$\mu$} and $\mathrm{r}(\mu):=\mathrm{r}(\alpha_k)$ is called the \emph{range 
of $\mu$}. Any vertex $v\in E^0$ can be observed as a trivial path of length zero 
with $\mathrm{s}(v)=\mathrm{r}(v)=v.$ By $E^k$ we denote the set of all paths of 
length $k,$ where $k\in\mathbb{N}_0,$ and we set 
$E^*:=\bigcup_{k\in\mathbb{N}_0}E^k,$ where, as usual, $\mathbb{N}_0$ denotes
the set of nonnegative integers.\\
\indent Let $E$ and $F$ be directed graphs. Recall also that by a \emph{graph 
homomorphism} $\psi: E\to F$ we mean a pair of mappings 
$(\psi^0: E^0\to F^0, \psi^1: E^1\to F^1)$ for which we have that 
$\mathrm{s}(\psi^1(\alpha))=\psi^0(\mathrm{s}(\alpha))$ and 
$\mathrm{r}(\psi^1(\alpha))=\psi^0(\mathrm{r}(\alpha))$ for every 
$\alpha\in E^1.$
\begin{definition}[Definition~1.6.2 in \cite{aasm}]
The category $\mathcal{G}$ is defined as the category with pairs $(E,X),$ where 
$E$ is a directed graph and $X\subseteq\mathrm{Reg}(E),$ as objects, and if 
$(F,Y)$ and $(E,X)$ are objects of $\mathcal{G},$ then $\psi=(\psi^0,\psi^1)$ is a 
morphism in $\mathcal{G}$ if the following conditions are satisfied:
\begin{itemize}
\item[$a)$] $\psi: F\to E$ is a graph homomorphism such that $\psi^0$ and
$\psi^1$ are injective;
\item[$b)$] $\psi^0(Y)\subseteq X;$
\item[$c)$] For every $v\in Y,$ the restriction 
$\psi^1: \mathrm{s}^{-1}_F(v)\to \mathrm{s}^{-1}_E(\psi^0(v))$ is a bijection.
\end{itemize}
A morphism $\psi$ is said to be \emph{complete} if, for every $v\in F^0,$ if
$\psi^0(v)\in X$ and $\mathrm{s}^{-1}_F(v)\neq\emptyset,$ then $v\in Y.$
\end{definition}
\begin{definition}[Definition~1.6.8 in \cite{aasm}]
Let $E=(E^0,E^1,\mathrm{r},\mathrm{s})$ be a directed graph and 
$X\subseteq\mathrm{Reg}(E).$ A subgraph $F$ of $E$ is said to be
\emph{$X$-complete} if the inclusion mapping 
$(F,\mathrm{Reg}(F)\cap X)\to(E,X)$ is a complete morphism in the category
$\mathcal{G}.$
\end{definition}
\begin{definition}\label{definitionlpa}
For a directed graph $E=(E^0,E^1,\mathrm{r},\mathrm{s}),$ a unital ring $R,$ 
and $X\subseteq\mathrm{Reg}(E),$ the \emph{Cohn path algebra of $E$ with 
respect to $X$} \cite{ag}, denoted by $C_R^X(E),$ is the free algebra 
generated by the sets $\{v\ |\ v\in E^0\},$ $\{\alpha\ |\ \alpha\in E^1\}$ and 
$\{\alpha^*\ |\ \alpha\in E^1\}$ with the coefficients in $R,$ subject to the
relations:
\begin{itemize}
\item[$1)$] $v_iv_j=\delta_{i,j}v_i$ for every $v_i,$ $v_j\in E^0;$
\item[$2)$] $\mathrm{s}(\alpha)\alpha=\alpha\mathrm{r}(\alpha)=\alpha$ and 
$\mathrm{r}(\alpha)\alpha^*=\alpha^*\mathrm{s}(\alpha)=\alpha^*$ for all 
$\alpha\in E^1;$
\item[$3)$] $\alpha^*\alpha'=\delta_{\alpha,\alpha'}\mathrm{r}(\alpha)$ for all 
$\alpha,$ $\alpha'\in E^1;$
\item[$4)$] $\sum_{\alpha\in E^1, \mathrm{s}(\alpha)=v}\alpha\alpha^*=v$ for 
every $v\in X.$
\end{itemize} 
We let $R$ commute with the set of generators of $C_R^X(E).$ If 
$X=\mathrm{Reg}(E),$ then $C_R^X(E)$ is called the \emph{Leavitt path algebra} 
\cite{aap,amp,rh} of $E$ over $R,$ and is denoted by $L_R(E).$
\end{definition}
The elements $\alpha^*,$ where $\alpha\in E^1,$ are called the 
\emph{ghost edges}. The set of all ghost edges is denoted by $(E^1)^*.$\\
\indent Clearly, $L_R(E)$ is with unity $1$ if and only if $E^0$ is finite, and in that 
case, $1=\sum_{v\in E^0}v$ (see for instance Lemma~1.2.12 in \cite{aasm}).\\
\indent If $\mu=\alpha_1\dots\alpha_k,$ where $\alpha_i\in E^1,$ is a path in $E,$
that is, if $\mu\in E^*,$ then, without further notice, we observe $\mu$ as an 
element of $C_R^X(E).$ Also, if $\mu=\alpha_1\dots\alpha_k\in E^*,$ then by 
$\mu^*$ we denote the element $\alpha_k^*\dots\alpha_1^*\in C_R^X(E).$ We 
moreover put $v^*=v$ for all $v\in E^0.$ According to the condition $3)$ of 
Definition~\ref{definitionlpa}, any word in the generators 
$\{v, \alpha, \alpha^*\ |\ v\in E^0, \alpha\in E^1\}$ in $C_R^X(E)$ can be written 
as $\mu\eta^*,$ where $\mu$ and $\eta$ are paths in $E.$ Elements of the form 
$\mu\eta^*,$ where $\mu,$ $\eta\in E^*,$ are called \emph{monomials}.
\subsection{Brandt groupoid gradings on Leavitt path algebras}
Let $E=(E^0,E^1,\mathrm{r},\mathrm{s})$ be a directed graph, $R$ a unital ring, 
$X\subseteq\mathrm{Reg}(E),$ and $C_R^X(E)$ the Cohn path algebra of $E$ with 
respect to $X$ with coefficients in $R.$\\
\indent Let $B$ be a Brandt groupoid with the set of all idempotent elements 
denoted by $I(B).$ Let us recall (see for instance \cite{cp}), a Brandt groupoid is a 
partial groupoid $B$ which satisfies the following axioms:
\begin{itemize}
\item[(B1)] If $st=u$ $(s,t,u\in B),$ then each of the three elements $s,$ $t,$ $u$
is uniquely determined by the other two;
\item[(B2)] Let $s,$ $t,$ $u\in B.$
\begin{itemize}
\item[(i)] If $st$ and $tu$ are defined, so are $(st)u$ and $s(tu),$ and 
$(st)u=s(tu);$
\item[(ii)] If $st$ and $(st)u$ are defined, so are $tu$ and $s(tu),$ and
$s(tu)=(st)u;$
\item[(iii)] If $tu$ and $s(tu)$ are defined, so are $st$ and $(st)u,$ and
$(st)u=s(tu);$
\end{itemize}
\item[(B3)] For every element $s\in B,$ there exist unique elements $e,$ $f,$
$s'\in B$ such that $es=sf=s$ and $s's=f;$
\item[(B4)] If $e,$ $f\in I(B),$ then there exists $s\in B$ such that $es=sf=s.$
\end{itemize}
\indent Note that $B$ satisfies the following condition:
\begin{itemize}
\item[(LRI)] For every $s\in B$ there exist $s^{-1}\in B$ and $e,$ $f\in I(B)$ such 
that $es=sf=s,$ $fs^{-1}=s^{-1}e=s^{-1},$ $ss^{-1}=e$ and $s^{-1}s=f.$
\end{itemize}
Let us also recall that $B$ is isomorphic to a partial groupoid $(M(G,I),\circ),$ for 
some group $G,$ and some set $I.$ Here, $M(G,I)$ consists of triples $(i,g,j)$ 
$(g\in G, i, j\in I),$ and $\circ$ is defined by $(i,g,j)\circ(k,h,l)=(i,gh,l)$ if $j=k.$
Without loss of generality, we may assume that $|I|=|E^0|.$\\
\indent Throughout this section, let $S=B\cup\{0\}$ and let us make $S$ a groupoid 
by setting $st=0$ for all $s,$ $t\in B$ for which $st$ is not defined in $B,$ and 
$0s=s0=0$ for all $s\in B.$ As it is well-known, $S$ is then a semigroup, known as 
a \emph{Brandt semigroup} (see \cite{cp}). We also write $S^\times$ instead 
of $B.$ Let $I(S)=I(B)\cup\{0\}.$ Of course, the semigroup 
$S$ also satisfies the (LRI) condition.\\
\indent Since $S$ is cancellative, note that idempotents $e$ and 
$f,$ and element $s^{-1}$ from the condition (LRI) are unique.\\
\indent We define a \emph{weight mapping} 
$w:E^*\cup\{\mu^*\ |\ \mu\in E^*\}\to B$ such that
\begin{eqnarray}\nonumber
w|_{E^0}:E^0&\to&I(B),\\
\nonumber w|_{E^1}:E^1&\to&B,\\
\nonumber w|_{(E^1)^*}:(E^1)^*&\to&B,
\end{eqnarray}  
subject to the following rules:
\begin{itemize}
\item[$w1)$] If $\alpha\in E^1,$ then $w(\alpha)$ is such that 
$w(\mathrm{s}(\alpha))w(\alpha)=w(\alpha)=w(\alpha)w(\mathrm{r}(\alpha)).$ 
\item[$w2)$] $w(\alpha^*)=(w(\alpha))^{-1}$ for every $\alpha\in E^1.$
\end{itemize}
\indent Note that the weight mapping from the example given in the Introduction, is
in lines with the listed rules.\\
\indent Now, let $w$ be a weight mapping defined with respect to the rules $w1)$
and $w2).$ For $s\in S^\times$ let
\[(C_R^X(E))_s=\{\sum_ir_i\mu_i\eta_i^*\ |\ r_i\in R, \mu_i, 
\eta_i\in E^*, \mathrm{r}(\mu_i)=\mathrm{r}(\eta_i), w(\mu_i\eta_i^*)=s\},\]
where the indicated sums are finite. Moreover, for the zero element $0$ of $S,$ 
let $(C_R^X(E))_0=\{0\}.$
\begin{definition}
Let $R=\bigoplus_{s\in T}R_s$ be a $T$-graded ring inducing $T,$ where 
$T$ is cancellative and with the property (LRI). Then an involution mapping 
$^*:R\to R,$ that is, a mapping for which $(x^*)^*=x$ for every $x\in R,$ is said to 
be \emph{anti-graded} if $(R_s)^*=R_{s^{-1}}$ for all $s\in T.$
\end{definition}
\begin{proposition}\label{propositiongrading}
The Cohn path algebra $C_R^X(E)$ with respect to $X$ is an $S$-graded ring 
$C_R^X(E)=\bigoplus_{s\in S}(C_R^X(E))_s$ with an anti-graded involution with 
respect to the given weight mapping $w.$ In particular, the Leavitt path algebra 
$L_R(E)=C_R^{\mathrm{Reg}(E)}(E)$ is an $S$-graded ring with an anti-graded 
involution. 
\end{proposition}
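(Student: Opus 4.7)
The plan is to realize $C_R^X(E)$ as a quotient of a naturally graded free algebra and show that the defining ideal is homogeneous. Let $F$ denote the free associative $R$-algebra on $E^0\cup E^1\cup(E^1)^*$. I would equip $F$ with an $S$-grading (where $S = B\cup\{0\}$) by declaring each generator $g$ homogeneous of degree $w(g)\in B$, and extending to words multiplicatively in the semigroup $S$: the word $g_1\cdots g_n$ has degree $w(g_1)\cdots w(g_n)$, which is $0$ as soon as an intermediate Brandt groupoid product is undefined. Writing $F = \bigoplus_{s\in S}F_s$ for the resulting decomposition, the Cohn path algebra is $C_R^X(E) = F/I$ with $I$ the ideal generated by the relations 1)--4) of Definition~\ref{definitionlpa}; it then suffices to show that $I$ is an $S$-homogeneous ideal and that $F_0\subseteq I$, from which the $S$-grading on $C_R^X(E)$ satisfying both conditions of Definition~\ref{sgris} follows.

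I would handle the homogeneity of $I$ relation by relation. For 1), $v_iv_j$ has degree $w(v_i)w(v_j)$, equal to $w(v_i)$ when $i=j$ (matching $v_i$) and to $0$ when $i\neq j$ (distinct idempotents of a Brandt groupoid do not compose), so in either case $v_iv_j - \delta_{i,j}v_i$ lies in a single $F_s$. Relation 2) is immediate from rule $w1$. For 3), rule $w2$ combined with the (LRI) identity $t^{-1}t = f$ applied to $t = w(\alpha)$ gives $w(\alpha^*)w(\alpha) = w(\mathrm{r}(\alpha))$ when $\alpha = \alpha'$, matching the right-hand side; when $\alpha\neq\alpha'$ the element is a single monomial and hence automatically homogeneous (possibly of degree $0$). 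For 4), each summand $\alpha\alpha^*$ has degree $w(\alpha)w(\alpha)^{-1} = w(\mathrm{s}(\alpha)) = w(v)$ by (LRI), matching $v$. The containment $F_0\subseteq I$ follows by a short case analysis: any word of undefined weight contains two consecutive generators whose weights fail to compose in $B$, and a suitable application of relation 2) (or its $*$-dual) inserts two distinct idempotent vertices between them, which then annihilate the whole word by relation 1).

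For the anti-graded involution, I would define $*$ on $F$ as the unique $R$-linear anti-homomorphism fixing each $v\in E^0$ and swapping $\alpha\leftrightarrow\alpha^*$ for each $\alpha\in E^1$. A direct inspection shows that the set of CK relations 1)--4) is closed under $*$, hence $I$ is $*$-invariant and $*$ descends to $C_R^X(E)$. For a monomial $\mu\eta^*$ with $\mu = \alpha_1\cdots\alpha_k$ and $\eta = \beta_1\cdots\beta_l$ one computes $(\mu\eta^*)^* = \eta\mu^*$, and then rule $w2$ together with the Brandt groupoid reversal identity $(t_1\cdots t_n)^{-1} = t_n^{-1}\cdots t_1^{-1}$ yields $w(\eta\mu^*) = w(\mu\eta^*)^{-1}$, so $(R_s)^*\subseteq R_{s^{-1}}$; equality follows by applying $*$ a second time. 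The Leavitt path algebra statement is the specialization $X = \mathrm{Reg}(E)$.

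The main obstacle is the containment $F_0\subseteq I$: one has to show that every Brandt groupoid composition failure among adjacent generators is already killed by the Cohn--Leavitt relations. Each failure mode --- vertex/vertex, vertex/edge, edge/edge, and their ghost analogues --- must be traced individually, and while each case is short, the bookkeeping across all nine combinations is where a slip is most likely. The remaining verifications (homogeneity of each relation and the involution identity) are straightforward Brandt groupoid computations once (LRI), $w1$, and $w2$ are in hand.
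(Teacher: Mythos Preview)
Your approach is correct and follows the same overall strategy as the paper: realize $C_R^X(E)$ as the quotient of the free $R$-algebra on $E^0\cup E^1\cup (E^1)^*$ by the ideal $I$ of Cohn--Kuntz relations, verify that each generator of $I$ is homogeneous with respect to the $S$-grading induced by $w$, and then check that the natural involution is anti-graded. Your relation-by-relation homogeneity checks and your involution computation via $(t_1\cdots t_n)^{-1}=t_n^{-1}\cdots t_1^{-1}$ are correct and match the paper's reasoning in substance.

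There is one methodological difference worth noting. You identify the containment $F_0\subseteq I$ as the main obstacle and propose to establish it by a case analysis on pairs of adjacent generators whose weights fail to compose, inserting vertices via relation 2) and annihilating via relation 1). The paper bypasses this entirely: it simply invokes the standard normal form for Cohn path algebras (every element is an $R$-linear combination of monomials $\mu\eta^*$ with $\mathrm{r}(\mu)=\mathrm{r}(\eta)$), already stated just before the proposition, and observes that every such monomial has a well-defined weight in $B$. This immediately gives $(C_R^X(E))_0=0$ without any case analysis. Your route is more self-contained, but the paper's is considerably shorter since the normal form is standard and already available; you could save yourself the nine-case bookkeeping by invoking it.
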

\begin{proof}
Let $R\langle Y\rangle$ be a free algebra over $R$ generated by the set
$Y=E^0\cup E^1\cup (E^1)^*.$ Then $R\langle Y\rangle$ is clearly an $S$-graded
ring with respect to $w.$ By definition, $C_R^X(E)$ is the factor algebra 
$(R\langle Y\rangle)/I,$ where $I$ is an ideal generated by the elements of the form:
\begin{enumerate}
\item $v_iv_j-\delta_{i,j}v_i$ for every $v_i,$ $v_j\in E^0;$
\item $\mathrm{s}(\alpha)\alpha-\alpha,$ $\alpha\mathrm{r}(\alpha)-\alpha,$  
$\mathrm{r}(\alpha)\alpha^*-\alpha^*,$ $\alpha^*\mathrm{s}(\alpha)-\alpha^*$ 
for all $\alpha\in E^1;$
\item $\alpha^*\alpha'-\delta_{\alpha,\alpha'}\mathrm{r}(\alpha)$ for all $\alpha,$ 
$\alpha'\in E^1;$
\item $v-\sum_{\alpha\in E^1, \mathrm{s}(\alpha)=v}\alpha\alpha^*$ for every 
$v\in X.$
\end{enumerate}
Now, all of these elements are homogeneous with respect to the given weight 
mapping $w.$ Therefore, $I$ is a homogeneous ideal of $R\langle Y\rangle.$ It 
follows that $C_R^X(E)$ is $S$-graded. Also, every element of $C_R^X(E)$ is a 
finite $R$-linear combination of monomials of the form $\mu\eta^*,$ where $\mu,$
$\eta\in E^*,$ and $\mathrm{r}(\mu)=\mathrm{r}(\eta).$ Hence, 
$C_R^X(E)=\bigoplus_{s\in S}(C_R^X(E))_s.$ For 
$\sum_ir_i\mu_i\eta_i^*\in C_R^X(E),$ let us define
$\left(\sum_ir_i\mu_i\eta_i^*\right)^*=\sum_ir_i\eta_i\mu_i^*.$ Then 
$^*:C_R^X(E)\to C_R^X(E)$ is an involution mapping (see also \cite{mt}).
Let $s\in S$ and $0\neq x\in(C_R^X(E))_s.$ Also, let $e\in I(S)$ be such
that $ss^{-1}=e.$ Now, $x$ is a finite sum 
$\sum_ir_i\mu_i\eta_i^*,$ where $r_i\in R,$ and $\mu_i,$ $\eta_i\in E^*$ are 
such that $\mathrm{r}(\mu_i)=\mathrm{r}(\eta_i)$ and 
$w(\mu_i\eta_i^*)=s$ for every $i.$ Then 
$x^*=\sum_ir_i\eta_i\mu_i^*.$ Since $S$ is cancellative, and 
$\mathrm{s}(\mu_i)\mu_i\eta_i^*=\mu_i\eta_i^*,$ it follows that
$w(\mathrm{s}(\mu_i))=e.$ Also,
since $(\mu_i\eta_i^*)(\eta_i\mu_i^*)=\mu_i\mu_i^*,$ by the 
definition of $w,$ we get that 
$w(\mu_i\eta_i^*)w(\eta_i\mu_i^*)=w(\mu_i)w(\mu_i^*)
=w(\mathrm{s}(\mu_i))=e.$
Hence, $w(\eta_i\mu_i^*)=s^{-1}.$ Thus, $(C_R^X(E))_s^*=(C_R^X(E))_{s^{-1}}$ 
for every $s\in S,$ that is, $C_R^X(E)$ is equipped with an anti-graded involution. 
\end{proof}
\begin{remark}
Note that since 
$C_R^X(E)=\bigoplus_{s\in B}(C_R^X(E))_s=\bigoplus_{s\in S}(C_R^X(E))_s,$ we
may observe $C_R^X(E)$ as a ring graded by a Brandt groupoid $B=S^\times.$
\end{remark}
\begin{remark}\label{remark1}
In a particular case when $S^\times=\mathbb{Z}$ is the additive group of integers, 
one obtains the most explored canonical $\mathbb{Z}$-grading of $C_R^X(E)$
(see \cite{dl}) by defining $w(v)=0$ for every $v\in E^0,$ and $w(\alpha)=1$ 
($w(\alpha^*)=-1$) for every $\alpha\in E^1.$
\end{remark}
Let us assume now that $w,$ along with $w1)$ and $w2),$ moreover satisfies the 
following rules:
\begin{itemize}
\item[$w3)$] If $e$ is an idempotent element of $B,$ then every edge, which starts 
from the vertex of weight $e,$ is of the same weight.
\item[$w4)$] If $e$ is an idempotent element of $B,$ then every edge, which ends 
in the vertex of weight $e,$ is of the same weight.
\end{itemize}
\begin{remark}\label{remarkZ}
Note that in the case of a connected graph which contains loops, we have that every
vertex is of the same weight and that every edge is of the same weight. Therefore,
in that case, if we take $B=M(\mathbb{Z},I)$ to be a Brandt groupoid over 
$\mathbb{Z},$ then we obtain the canonical $\mathbb{Z}$-grading of $L_R(E)$ by 
putting $w(v)=(i,0,i)$ for every $v\in E^0$ and $w(\alpha)=(i,1,i)$ 
($w(\alpha^*)=(i,-1,i)$) for every $\alpha\in E^1,$ for some $i\in I.$
\end{remark}
\indent Of course, there are many ways to define $w$ which satisfies the rules
$w1)$-$w4).$ For instance, let $v\in E^0$ be a vertex for which 
$\mathrm{s}^{-1}(v)\neq\emptyset.$ Take $e\in I(S)^\times$ and put $w(v)=e.$\\
\indent Case 1. $v$ belongs to a component which contains loops. Then, 
by Remark~\ref{remarkZ}, the component which contains vertex $v$ is 
such that every vertex is of the same weight $e,$ and every edge is of the same 
weight $s$ such that $es=se=s.$\\
\indent Case 2. $v$ belongs to a component which contains no loops. Let 
$e\neq f\in I(S)^\times,$ and let $s\in S$ be such that $es=sf=s.$ Then, for every 
edge $\alpha\in\mathrm{s}^{-1}(v),$ we put $w(\alpha)=s$ and 
$w(\mathrm{r}(\alpha))=f.$ Further, for each $\alpha\in\mathrm{s}^{-1}(v),$ if 
$\mathrm{s}^{-1}(\mathrm{r}(\alpha))\neq\emptyset,$ we put $w(\gamma)=t$ 
for every $\gamma\in\mathrm{s}^{-1}(\mathrm{r}(\alpha)),$ where 
$t\in S$ is such that $ft=t,$ and so on. Note that $st\neq0.$\\
\indent Of course, if we start with $f=e,$ it leads us to the weight mapping from 
the Case 1.\\
\indent If $v'$ is a vertex for which $\mathrm{r}^{-1}(v')\neq\emptyset,$ then we 
proceed analogously in the opposite direction with the given rules in mind. For 
instance, let $w(v')=e'.$ Again, if $v'$ belongs to a component with loops, then
each vertex of that component is of weight $e'$ and each edge of that component  
is of weight $s',$ where $s'$ is such that $s'e'=e's'=s'.$ Assume that $v'$ belongs
to a component without loops. Take $e'\neq f'\in I(B).$ Then, for every 
$\beta\in\mathrm{r}^{-1}(v),$ we put $w(\beta)=\bar{s},$ where $\bar{s}$ is
such that $\bar{s}e'=f'\bar{s}=\bar{s},$ etc.\\
\indent Finally, we put $w(\alpha^*)=(w(\alpha))^{-1}$ for every 
$\alpha\in E^1.$\\
\indent In view to Remark~\ref{remark1}, we introduce the following notion.
\begin{definition}\label{definitionc}
Let $S=B\cup\{0\}$ be a Brandt semigroup over the additive group of integers 
$\mathbb{Z},$ that is, $B=M(\mathbb{Z},I),$ for some set $I.$ Let 
$w:E^*\cup\{\mu^*\ |\ \mu\in E^*\}\to B$ be the weight mapping which satisfies 
the rules $w1)$-$w4),$ and it is such that the edges have the weights of the form 
$(i,1,j)\in B$ ($(j,-1,i)$ for the corresponding ghost edges) and vertices have the 
weights of the form $(i,0,i).$ Then an $S$-graded algebra $C_R^X(E),$ with respect 
to $w,$ in the sense of Proposition~\ref{propositiongrading}, is said to be 
\emph{canonically $S$-graded}.
\end{definition}
We now state the result which characterizes canonically $S$-graded von Neumann 
regularity of the Leavitt path algebras of arbitrary non-null directed graphs in terms 
of von Neumann regularity of the coefficient unital rings.
\begin{theorem}\label{theoremmain}
Let $R$ be a unital ring and let $E$ be a directed graph distinct from the null graph. 
If $S$ is a Brandt semigroup over the additive group of integers, then the Leavitt 
path algebra $L_R(E)$ is graded von Neumann regular as a canonically $S$-graded 
ring if and only if $R$ is von Neumann regular.
\end{theorem}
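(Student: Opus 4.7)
The plan is to prove both directions of the biconditional separately, using the previously established $\mathbb{Z}$-graded case (Theorem~\ref{theoremdl}) as a black box to streamline the harder (sufficiency) direction.

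For \emph{necessity}, suppose $L_R(E)$ is graded von Neumann regular as a canonically $S$-graded ring. Pick any vertex $v\in E^0$ (available because $E$ is non-null) and set $e=w(v)\in I(B).$ For $r\in R,$ the element $rv$ is homogeneous of degree $e,$ so there is a homogeneous $y\in L_R(E)$ with $rv=rv\cdot y\cdot rv.$ Cancellativity of $S,$ together with the identity $e\cdot\deg(y)\cdot e=e,$ forces $\deg(y)=e.$ Expanding $y$ in the standard monomial normal form for the Leavitt path algebra and projecting the equation onto the ``$v$-coefficient,'' one extracts an element $r'\in R$ with $r=rr'r.$ Hence $R$ is von Neumann regular.

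For \emph{sufficiency}, the key observation is that the canonical $S$-grading refines the canonical $\mathbb{Z}$-grading: any monomial $\mu\eta^*$ of $S$-weight $(i,n,j)$ has length difference $|\mu|-|\eta|=n,$ hence $\mathbb{Z}$-degree $n.$ Assume $R$ is von Neumann regular. By Theorem~\ref{theoremdl}, $L_R(E)$ is $\mathbb{Z}$-graded von Neumann regular. Given $x\in(L_R(E))_s$ with $s=(i,n,j)\in S^\times,$ the element $x$ is also $\mathbb{Z}$-homogeneous of degree $n,$ so there exists a $\mathbb{Z}$-homogeneous $y\in L_R(E)$ of $\mathbb{Z}$-degree $-n$ with $x=xyx.$ Decompose $y=\sum_t y_t$ into its $S$-homogeneous components; each index $t$ has middle coordinate $-n.$ A direct check in $S$ shows that $s\cdot t\cdot s$ is defined exactly when $t=(j,-n,i)=s^{-1};$ for every other $t,$ at least one of the products $s\cdot t$ or $(s\cdot t)\cdot s$ is undefined, so condition~$ii)$ of Definition~\ref{sgris} gives $x\cdot y_t\cdot x=0.$ The sum therefore collapses to $x=x\cdot y_{s^{-1}}\cdot x,$ with $y_{s^{-1}}\in (L_R(E))_{s^{-1}}$ the required $S$-homogeneous witness.

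The main obstacle in this plan is the coefficient-extraction step in the necessity direction: making the ``$v$-coefficient'' projection rigorous requires an honest Cohn/Leavitt normal form over a general unital coefficient ring $R,$ and care that the Leavitt relations do not glue distinct normal-form monomials together in a way that spoils the comparison. The sufficiency direction, by contrast, becomes essentially grading-theoretic bookkeeping once Theorem~\ref{theoremdl} is available. An alternative, more self-contained approach---mirroring the strategy of \cite{rh,dl} explicitly flagged in the introduction---would re-prove sufficiency from scratch by first handling finite graphs directly (where $L_R(E)$ is unital) and then extending to arbitrary $E$ by writing $L_R(E)=\varinjlim L_R(F)$ over finite $\mathrm{Reg}$-complete subgraphs $F\subseteq E$ and invoking Lemma~\ref{lemmadlvnr}.
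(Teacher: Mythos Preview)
Your proposal is correct, and the sufficiency argument takes a genuinely different route from the paper.

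For the sufficiency direction, the paper does \emph{not} invoke Theorem~\ref{theoremdl} as a black box. Instead it develops the general characterization Theorem~\ref{theoremm} (graded von Neumann regular $\Leftrightarrow$ nearly epsilon-strongly graded with von Neumann regular ring components), proves $L_R(E)$ is nearly epsilon-strongly $S$-graded (Theorem~\ref{theoremnesg}), shows each $(L_R(E))_e$ is ultramatricial and hence von Neumann regular when $E$ is finite (Lemma~\ref{lemmafdg} via Proposition~\ref{propositiond}), and then passes to arbitrary $E$ through the Cohn-path-algebra functor and direct limits (Proposition~\ref{propositionm}). Your argument bypasses all of this: the observation that the canonical $S$-grading refines the canonical $\mathbb{Z}$-grading (the middle coordinate of $w(\mu\eta^*)$ equals $|\mu|-|\eta|$), together with the Brandt-groupoid computation that $s\,t\,s\neq 0$ forces $t=s^{-1}$ when the middle coordinate of $t$ is fixed, reduces $S$-graded regularity immediately to $\mathbb{Z}$-graded regularity. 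This is considerably shorter and shows that Theorem~\ref{theoremmain} is, at the level of sufficiency, a formal consequence of Theorem~\ref{theoremdl}. What the paper's approach buys in return is self-containment---it does not logically depend on the $\mathbb{Z}$-graded case---and the general structural results (Theorem~\ref{theoremm}, Theorem~\ref{theoremnesg}, Proposition~\ref{propositiond}) which are of independent interest and are needed anyway for the applications in Theorem~\ref{theoremgs}.

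For the necessity direction your argument matches the paper's (see Lemma~\ref{lemmavnr} and its proof): embed $R$ via $r\mapsto rv$, use graded regularity to produce a homogeneous witness of idempotent degree, and extract a coefficient. The paper handles the ``main obstacle'' you flag exactly as you anticipate, by deferring the normal-form comparison to the argument of Lemma~4.5 in \cite{dl}; so your concern is well placed but already addressed in the literature the paper cites.
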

In particular, since a field $K$ is von Neumann regular, then $L_K(E)$ is 
graded von Neumann regular as a canonically $S$-graded ring. Therefore, by 
Remark~\ref{remark1}, as a corollary to Theorem~\ref{theoremmain}, one obtains 
Theorem~\ref{theoremrh}, as well as Theorem~\ref{theoremdl}.\\
\indent Also, following the setting from Remark~\ref{remarkZ}, in the 
case $E$ is a connected graph with loops, Theorem~\ref{theoremmain} 
coincides with Theorem~\ref{theoremdl}.  
\begin{remark}
As it is noted in Remark~4.7 in \cite{dl}, in case $E$ is the null graph, the Leavitt 
path algebra is the zero ring over any ring $R.$ Hence, $L_R(E)$ is then
graded von Neumann regular as an $S$-graded ring for any $R,$ which means that
Theorem~\ref{theoremmain} does not hold for the null graphs.
\end{remark}
\begin{remark}
Theorem~\ref{theoremmain} cannot be extended to arbitrary gradings by Brandt 
semigroups. Namely, let $S=M(\mathbb{Z},I)$ be a Brandt semigroup over 
$\mathbb{Z},$ and let $E=R_1$ be the graph with one vertex $v$ and one loop 
$\alpha.$ Also, let $K$ be a field, and let us put $w(v)=w(\alpha)=e=(i,0,i),$ for 
some $i\in I.$ Note that $w$ satisfies $w1)$ and $w2).$ Then, $L_K(R_1)$ is 
trivially $S$-graded in the sense of Proposition~\ref{propositiongrading}: 
$(L_K(R_1))_e=L_K(R_1),$ and $(L_K(R_1))_s=0$ for all $e\neq s\in S.$ 
However, as it is well-known, $L_K(R_1)\cong K[x,x^{-1}],$ and $K[x,x^{-1}]$ is 
not a von Neumann regular ring.
\end{remark}
\section{Groupoid graded von Neumann regular rings}\label{gradedvnr}
In this section, rings are graded in the sense of Definition~\ref{sgris}.
\begin{definition}
Let $R$ be an $S$-graded ring inducing $S.$ We say that $R$ is 
\emph{graded von Neumann regular} if $x\in xRx$ for every $x\in H_R.$
\end{definition}
The following characterization, as in the case of group graded rings, is clear, but we
include its proof for the sake of completeness.
\begin{proposition}\label{proposition1}
Let $R=\bigoplus_{s\in S}R_s$ be an $S$-graded ring inducing $S,$ and let $S$ be
cancellative. Then $R$ is graded von Neumann regular if and only if for every 
$x\in H_R$ there exists $y\in H_R$ such that $x=xyx.$
\end{proposition}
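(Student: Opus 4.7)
The forward direction is immediate: if every homogeneous $x$ admits a homogeneous $y$ with $x=xyx$, then in particular $x \in xRx$, so $R$ is graded von Neumann regular.

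For the converse, fix $x \in H_R$; the case $x = 0$ is trivial, so assume $x \neq 0$ and let $t = \deg(x) \in S^{\times}$. By hypothesis there is some $r \in R$ with $x = xrx$. Decompose $r = \sum_{s \in S} r_s$ into its homogeneous components (a finite sum), so that
\[
x \;=\; x r x \;=\; \sum_{s \in S} x r_s x.
\]
For each $s$, either $x r_s x = 0$, or else $R_t R_s R_t \neq 0$, so by the grading axioms $ts$ and $(ts)t$ are defined, and by Remark~\ref{remarkassoc} we have $(ts)t = t(st) = tst \neq 0$ with $x r_s x \in R_{tst}$.

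The plan is now to compare the $R_t$-components on both sides of the above equation using that $R$ is a direct sum of the $R_s$. The left-hand side has $R_t$-component equal to $x$, while the right-hand side contributes to $R_t$ only through those indices $s$ for which $tst = t \neq 0$. The main (and only real) point is to show such an $s$ is unique. Here I will use the cancellativity of $S$: if $s$ and $s'$ both satisfy $tst = ts't = t \neq 0$, then from $t(st) = t(s't) \neq 0$ left cancellation yields $st = s't \neq 0$, and then right cancellation yields $s = s'$.

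Therefore there is at most one index $s_0$ with $t s_0 t = t$, and comparing $R_t$-components forces $x = x r_{s_0} x$. Taking $y = r_{s_0} \in R_{s_0} \subseteq H_R$ gives the desired homogeneous element with $x = x y x$. The only subtle step is the uniqueness argument for $s_0$; once cancellativity is invoked in both sides (carefully ensuring the products involved are nonzero so that the cancellation law applies), the rest is a routine homogeneous-component comparison.
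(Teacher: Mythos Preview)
Your proof is correct and follows essentially the same approach as the paper's: decompose the witness into homogeneous pieces, use cancellativity (together with Remark~\ref{remarkassoc}) to control the degrees of the summands $xr_sx$, and then read off a single homogeneous component. The only cosmetic difference is that the paper shows distinct $s$ yield distinct degrees $tst$ (so all nonzero summands lie in different components), whereas you argue directly that at most one $s$ can satisfy $tst=t$ and compare $R_t$-components; both amount to the same cancellativity computation.
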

\begin{proof}
The `if' part is obvious by the definition of a graded von Neumann regular ring. Now, 
let us assume that $R$ is graded von Neumann regular, and let $x\in H_R.$ Without 
loss of generality, we may assume that $x\neq0.$ Since $R$ is graded von Neumann
regular, there exists $y\in R$ such that $x=xyx.$ Let $y=\sum_{s\in S}y_s$ be
a unique homogeneous decomposition of $y.$ Then $x=xyx=\sum_{s\in S}xy_sx.$
By the hypothesis, $S$ is cancellative. Therefore, if $s$ and $t$ are distinct
elements of $S$ such that $R_{\deg(x)}R_sR_{\deg(x)},$ 
$R_{\deg(x)}R_tR_{\deg(x)}\neq0,$ it follows by Remark~\ref{remarkassoc} that 
$\deg(x)s\deg(x)\neq\deg(x)t\deg(x).$ Hence, from $x=\sum_{s\in S}xy_sx$ we 
get that there exists $s\in S$ such that $x=xy_sx,$ which concludes the proof.
\end{proof}
\begin{lemma}\label{lemma1}
Let $R=\bigoplus_{s\in S}R_s$ be an $S$-graded ring inducing $S,$ and let us
suppose that $S$ is cancellative. If $R$ is graded von Neumann regular, then
$R_e$ is von Neumann regular for every $e\in I(S).$
\end{lemma}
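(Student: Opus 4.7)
The plan is to start with an element $x \in R_e$ for a fixed $e \in I(S)$, apply the graded von Neumann regularity via Proposition~\ref{proposition1} to extract a \emph{homogeneous} element $y$ with $x = xyx$, and then exploit the cancellativity of $S$ together with the idempotence of $e$ to force $\deg(y) = e$. The case $e = 0$ is immediate, since $R_0 = 0$ is trivially von Neumann regular, so I only need to treat $e \in I(S)^\times$.

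First I would take an arbitrary nonzero $x \in R_e$ and invoke Proposition~\ref{proposition1} to obtain $y \in R_t$, for some $t \in S^\times$, such that $x = xyx$. Because $xyx = x \neq 0$, the products $R_e R_t$ and $(R_e R_t)R_e$ are both nonzero, so by condition ii) of Definition~\ref{sgris} the products $et$ and $(et)e$ are defined in $S$, and $xyx$ is homogeneous of degree $(et)e$. Comparing degrees of the two sides of $x = xyx$ now gives $(et)e = e$, with both sides nonzero.

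Next I would compare this with the identity $(ee)e = e$ coming from idempotence of $e$. Thus $(et)e = (ee)e$ with both sides nonzero, and right-cancellation in $S$ yields $et = ee$. A second application of cancellativity, this time on the left, gives $t = e$. Hence $y \in R_e$, and the relation $x = xyx$ already certifies that $x \in x R_e x$, establishing the von Neumann regularity of $R_e$.

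The main delicate point is the bookkeeping of where products are actually defined in the groupoid $S$: one has to read off from Definition~\ref{sgris}~ii) that $et$ and $(et)e$ are genuinely defined (rather than being the extraneous zero element) precisely because the corresponding products of components are nonzero, and to know from Remark~\ref{remarkassoc} that we may unambiguously write $ete$. Once those observations are in place, the argument collapses to two straightforward cancellations against the idempotent $e$.
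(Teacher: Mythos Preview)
Your proof is correct and follows essentially the same route as the paper's: invoke Proposition~\ref{proposition1} to obtain a homogeneous $y$ with $x=xyx$, read off $e\deg(y)e=e$ via Remark~\ref{remarkassoc}, and then cancel against the idempotent $e$ to force $\deg(y)=e$. The only difference is cosmetic: you spell out the two cancellation steps $(et)e=(ee)e\Rightarrow et=ee\Rightarrow t=e$ explicitly, whereas the paper compresses them into a single appeal to cancellativity.
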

\begin{proof}
Let $e$ be an arbitrary idempotent element from $S^\times,$ and 
$0\neq x\in R_e.$ Since $R$ is graded von Neumann regular, 
Proposition~\ref{proposition1} implies that there exists $y\in H_R$ such that 
$x=xyx.$ Now, since $x\neq0,$ we get by Remark~\ref{remarkassoc} that 
$e=e\deg(y)e.$ By the cancellativity of $S,$ it follows that $\deg(y)=e.$ Hence, 
$R_e$ is a von Neumann regular ring.
\end{proof}
It is known from the group graded case that the reverse statement of the previous
lemma does not hold (see for instance Example~3.1 in \cite{dl}).\\
\indent The following notion generalizes the notion of a nearly epsilon-strongly group 
graded ring from \cite{no} to the case of $S$-graded rings inducing $S.$
\begin{definition}\label{definitiones}
Let $R=\bigoplus_{s\in S}R_s$ be an $S$-graded ring inducing $S,$ where $S$
is cancellative. We say that $R$ is \emph{nearly epsilon-strongly graded} if the 
following conditions are satisfied:
\begin{itemize}
\item[$i)$] $S$ satisfies $(\mathrm{LRI});$
\item[$ii)$] For every $s\in S$ and $x\in R_s$ there exist 
$\epsilon(x)\in R_sR_{s^{-1}}$ and $\epsilon'(x)\in R_{s^{-1}}R_s$ such that
$\epsilon(x)x=x=x\epsilon'(x).$
\end{itemize}
\end{definition}
\begin{remark}\label{remarknesg}
If $R=\bigoplus_{s\in S}R_s$ is nearly epsilon-strongly graded, we note that 
$R_eR_f=0$ for all distinct $e,$ $f\in I(S).$ Namely, let $e,$ $f\in I(S)^\times$ and 
$e\neq f.$ If $R_eR_f\neq0,$ then there exist $x\in R_e$ and $y\in R_f$ such that
$xy\neq0,$ and $\deg(xy)=ef.$ Since $R$ is nearly epsilon-strongly graded, there 
exists $\epsilon(x)\in R_e$ such that $\epsilon(x)x=x.$ Hence, 
$\epsilon(x)xy=xy\neq0.$ So, we get that $eef=ef.$ Since $S$ is cancellative, it 
follows that $ef=f.$ Hence, $e=f,$ a contradiction. Therefore, $R_eR_f=0$ for all 
distinct $e,$ $f\in I(S).$
\end{remark}
\begin{example}\label{examplenesg}
Let $R$ be an s-unital ring, and let $\mathbb{M}_2(R)=\left(%
\begin{array}{cc}
  R & R \\
  R & R \\
\end{array}%
\right)$ be the ring of $2\times 2$ matrices with the usual matrix addition and
multiplication over $R.$ Also, let $(\mathbb{M}_2(R))_{(i,j)}$ be the subset of 
$\mathbb{M}_2(R)$ which consists of matrices with entries from $R$ in the $(i,j)$ 
position and zeroes elsewhere, where $i,j=1,2.$ Then, like in the example from 
Section~\ref{introduction}, 
$\mathbb{M}_2(R)=\bigoplus_{i,j=1,2}(\mathbb{M}_2(R))_{(i,j)}$ is a 
strongly $S$-graded ring inducing $S,$ where $S^\times=\{(i,j)\ |\ i,j\in\{1,2\}\},$
with respect to $(i,j)(k,l)=\delta_{j,k}(i,l),$ for all $i,$ $j,$ $k,$ $l=1,2$ 
(see \cite{ak, ak2}). Namely, since $R$ is s-unital, we have
\[(\mathbb{M}_2(R))_{(i,j)}(\mathbb{M}_2(R))_{(k,l)}=
\begin{cases}(\mathbb{M}_2(R))_{(i,l)} & \quad
\mathrm{if}\quad j=k;\\
O & \quad \mathrm{otherwise,}
\end{cases}\]
for all $i,$ $j,$ $k,$ $l=1,2,$ where $O$ denotes the zero matrix. Also, $S$ is 
cancellative, satisfies (LRI), and, of course, $S^\times$ is not a group. Let 
$(i,j)\in S^\times,$ and let $X\in(\mathbb{M}_2(R))_{(i,j)}$ be a nonzero matrix 
with a nonzero entry $x_{(i,j)}\in R.$ Since $R$ is $s$-unital, there exist $e_{(i,i)},$ 
$e_{(j,j)}\in R$ such that $e_{(i,i)}x_{(i,j)}=x_{(i,j)}=x_{(i,j)}e_{(j,j)}.$ Now, let 
$\epsilon(X)\in(\mathbb{M}_2(R))_{(i,i)}=(\mathbb{M}_2(R))_{(i,j)}
(\mathbb{M}_2(R))_{(j,i)}$ be a nonzero matrix with a nonzero entry $e_{(i,i)}$ and
let $\epsilon'(X)\in(\mathbb{M}_2(R))_{(j,j)}=(\mathbb{M}_2(R))_{(j,i)}
(\mathbb{M}_2(R))_{(i,j)}$ be a nonzero matrix with a nonzero entry $e_{(j,j)}.$ 
Then $\epsilon(X)X=X=X\epsilon'(X).$ Hence, $\mathbb{M}_2(R)$ is a nearly epsilon
strongly $S$-graded ring inducing $S.$
\end{example}
The following result justifies the usage of the (LRI) condition in the first place. 
\begin{proposition}\label{proposition2}
Let $R=\bigoplus_{s\in S}R_s$ be an $S$-graded ring inducing $S,$ and let us
assume that $S$ is cancellative. If $R$ is graded von Neumann regular, then $R$ is
nearly epsilon-strongly graded.
\end{proposition}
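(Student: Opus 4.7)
The plan is to produce, for each $s\in S^\times$ and each nonzero $x\in R_s$, a single homogeneous ``reflexive'' partner $y'\in R_t$ satisfying both $xy'x=x$ and $y'xy'=y'$; this one object will simultaneously encode the (LRI) structure at $s$ and supply the local units $\epsilon(x)$, $\epsilon'(x)$ demanded by Definition~\ref{definitiones}. Proposition~\ref{proposition1} first yields a homogeneous $y\in H_R$ with $xyx=x$, and the standard substitution $y':=yxy$ upgrades this via the routine calculations $xy'x=(xyx)(yx)=xyx=x$ and $y'xy'=y(xyx)yxy=yxyxy=y(xyx)y=yxy=y'$. Since $xy'x=x\neq 0$, the element $y'$ is nonzero and therefore has a unique degree, which I denote $t\in S^\times$.

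Next I would extract (LRI) at $s$. Since $xy'x=x\neq 0$ and $y'xy'=y'\neq 0$, Remark~\ref{remarkassoc} gives the two groupoid identities $sts=s$ and $tst=t$, with every associativity bracketing used in the calculations above legitimately available. Set $e:=st$, $f:=ts$ and $s^{-1}:=t$. The element $xy'\in R_sR_t\subseteq R_e$ is nonzero (otherwise $xy'x=0$) and squares to itself, $(xy')(xy')=x(y'xy')=xy'$, so it lies in both $R_e$ and $R_{ee}$; uniqueness of the degree of a nonzero homogeneous element forces $ee=e$, that is, $e\in I(S)$. The symmetric argument on $y'x$ yields $f\in I(S)$. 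The six (LRI) identities $es=sts=s$, $sf=sts=s$, $s^{-1}e=tst=s^{-1}$, $fs^{-1}=tst=s^{-1}$, $ss^{-1}=st=e$, $s^{-1}s=ts=f$ are then formal consequences, and the case $s=0$ is trivial with $s^{-1}=e=f=0$. This establishes condition $i)$ of Definition~\ref{definitiones}.

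For condition $ii)$, I would simply take $\epsilon(x):=xy'\in R_sR_{s^{-1}}$ and $\epsilon'(x):=y'x\in R_{s^{-1}}R_s$; then $\epsilon(x)x=xy'x=x$ and $x\epsilon'(x)=xy'x=x$, with the case $x=0$ vacuous. The only real obstacle throughout is the discipline required by Remark~\ref{remarkassoc}: every inference from a nonzero triple product in $R$ to an associativity identity in $S$, and every regrouping of brackets inside $R$, must be justified by nonvanishing of the corresponding component product. At each step above this is immediate, since all intermediate expressions collapse, via $xyx=x$ or $y'xy'=y'$, to nonzero instances of $x$, $y'$, $xy'$ or $y'x$, and so no additional case analysis is required.
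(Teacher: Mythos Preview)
Your proof is correct and follows essentially the same line as the paper's: obtain a homogeneous $y$ with $xyx=x$ via Proposition~\ref{proposition1}, read off $e=st$, $f=ts$, $s^{-1}=t$, and set $\epsilon(x)=xy$, $\epsilon'(x)=yx$. The only cosmetic difference is that you first replace $y$ by the reflexive inverse $y'=yxy$ so that the identity $y'xy'=y'$ delivers the $t$-side relations $te=ft=t$ directly, whereas the paper obtains them from cancellativity applied to $s=sts$; both routes are standard and equally short.
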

\begin{proof}
We prove that the conditions $i)$ and $ii)$ of Definition~\ref{definitiones} are 
satisfied. Let $s\in S^\times$ and $0\neq x\in R_s.$ Since $R$ is graded von 
Neumann regular, by Proposition~\ref{proposition1} there exist $t\in S$ and 
$y\in R_t$ such that $x=xyx.$ Then $s=sts$ by Remark~\ref{remarkassoc}. It 
follows that $e:=st\in I(S)^\times$ and that $f:=ts\in I(S)^\times.$ Hence, $es=s$ 
and $sf=s.$ Since $S$ is cancellative, we get that $t=tst.$ Hence, $t=tst=te=ft.$ 
Therefore, $i)$ holds true for $s^{-1}:=t.$ Now, 
$\epsilon(x):=xy\in R_sR_t$ is such that $x=(xy)x=\epsilon(x)x,$ and 
$\epsilon'(x):=yx\in R_tR_s$ is such that $x=x(yx)=x\epsilon'(x).$ Therefore, $ii)$ 
holds true as well. Thus, $R$ is indeed nearly epsilon-strongly graded.
\end{proof}
We also note that the following characterization of graded von Neumann regular rings 
which are nearly epsilon-strongly $S$-graded rings inducing $S$ holds true
(cf.~Theorem~1.1 in \cite{krg}, Proposition~1 in \cite{rh} and 
Proposition~\ref{propositionvn}).
\begin{proposition}\label{propositionvnrg}
Let $R=\bigoplus_{s\in S}R_s$ be an $S$-graded ring inducing $S$ which is nearly
epsilon-strongly graded. Then the following statements are equivalent:
\begin{itemize}
\item[$i)$] $R$ is graded von Neumann regular;
\item[$ii)$] Every principal right (left) homogeneous ideal of $R$ is generated by
a homogeneous idempotent element;
\item[$iii)$] Let $I$ be a right (left) ideal of $R$ which is generated by finitely many
homogeneous elements, say $\{x_1,\dots,x_n\},$ such that for all 
$i\in\{1,\dots,n\}$ we have that $\deg(x_i)(\deg(x_i))^{-1}=e,$ for some 
$e\in I(S)^\times.$ Then $I$ is generated by a homogeneous idempotent element. 
\end{itemize}
\end{proposition}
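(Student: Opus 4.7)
The plan is to prove the cycle (i) $\Rightarrow$ (ii) $\Rightarrow$ (iii) $\Rightarrow$ (i). The implication (iii) $\Rightarrow$ (ii) is immediate by taking $n = 1$ (the hypothesis $\deg(x_1)\deg(x_1)^{-1} \in I(S)^\times$ is automatic for nonzero $x_1$ thanks to LRI), so the cycle reduces to three substantive implications. The nearly epsilon-strong hypothesis enters in two ways throughout: it gives $x \in xR \cap Rx$ for every homogeneous $x,$ so the principal right ideal generated by $x$ is exactly $xR,$ and it forces $S$ to satisfy LRI, which lets me identify the degrees of all idempotents that appear.

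For (i) $\Rightarrow$ (ii), let $x \in R_s$ be nonzero homogeneous; by Proposition~\ref{proposition1} pick a homogeneous $y$ with $x = xyx.$ Remark~\ref{remarkassoc} and cancellativity of $S$ force $\deg(y) = s^{-1},$ so $e := xy$ is homogeneous of degree $ss^{-1} \in I(S)^\times,$ and $e^2 = (xyx)y = xy = e.$ Combining $ex = xyx = x$ with $e \in xR$ yields $xR = eR.$ For (ii) $\Rightarrow$ (i), given $xR = eR$ with $e$ a homogeneous idempotent, membership $e \in xR$ lets me write $e = xr$ for some $r \in R;$ decomposing $r$ into its homogeneous components together with cancellativity of $S$ isolates a single component $y \in H_R$ with $e = xy,$ and since $x \in eR$ gives $ex = x,$ we obtain $x = xyx.$

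The main content is (ii) $\Rightarrow$ (iii), which I will prove by induction on $n.$ The base case is (ii) itself. For the induction step, the inductive hypothesis applied to $I' := \langle x_1, \dots, x_{n-1} \rangle$ gives $I' = g'R$ for a homogeneous idempotent $g',$ whose degree must be $e$ by the degree analysis already used in (i) $\Rightarrow$ (ii). Put $z := x_n - g' x_n;$ the LRI identity $e \deg(x_n) = \deg(x_n)$ makes $z$ homogeneous of degree $\deg(x_n)$ (or else $z = 0,$ in which case $x_n \in I'$ and $I = I'$ already). Applying (ii) to $z,$ write $zR = fR$ for a homogeneous idempotent $f$ of degree $e.$ Since $g'z = 0$ and $f \in zR,$ I obtain $g'f = 0,$ and then
\[
g := f + g' - fg'
\]
is a homogeneous idempotent of degree $e$ (a direct expansion of $g^2$ using $g'f = 0$ collapses to $g$). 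The identities $gg' = g'$ and $gf = f,$ together with the obvious inclusion $g \in g'R + fR,$ give $gR = g'R + fR = I.$

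The main technical hurdle will be the bookkeeping of degrees: at every step one must confirm that products such as $xy,$ $g' x_n,$ and $fg'$ lie in exactly the expected homogeneous component of $R,$ which requires careful and repeated use of LRI identities like $ss^{-1}s = s$ and $es = s$ when $e = ss^{-1}.$ Degenerate cases (such as $z = 0$ above) must be treated separately. The left-ideal version is symmetric, using $x \in Rx$ in place of $x \in xR.$
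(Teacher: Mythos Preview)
Your proof is correct and follows essentially the same route as the paper: both establish $(i)\Leftrightarrow(ii)$ by the classical $e=xy$ idempotent argument together with cancellativity, and both handle $(ii)\Rightarrow(iii)$ by induction, replacing $x_n$ by $z=x_n-g'x_n$ and combining the resulting idempotents. Your final idempotent $g=f+g'-fg'$ is literally the paper's $a+c$ with $c=b-ba$ (in the paper's notation $a\leftrightarrow g'$, $b\leftrightarrow f$), so the two arguments coincide up to presentation; if anything, your treatment of the degree bookkeeping and of why $g'f=0$ is slightly more explicit than the paper's.
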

\begin{proof}
Implications $i)\Rightarrow ii)$ and $iii)\Rightarrow i)$ are clear like in the 
non-graded case. The proof of implication $ii)\Rightarrow iii)$ is also similar to the
non-graded case but there are certain details that should be addressed. So, let 
$x$ and $y$ be nonzero homogeneous elements of $R,$ say $\deg(x)=s,$ 
$\deg(y)=t,$ and such that $ss^{-1}=tt^{-1}=e\in I(S).$ Then, $es=s$ and $et=t.$ 
Since there exist $\epsilon'(x)\in R_{s^{-1}}R_s$ and 
$\epsilon'(y)\in R_{t^{-1}}R_t$ such that $x=x\epsilon'(x)$ and $y=y\epsilon'(y),$ 
we have that $xR+yR$ is a homogeneous right ideal of $R$ generated by the set 
$\{x,y\}.$ Now, by $ii),$ there exists a homogeneous idempotent element $a\in R$ 
such that $xR=aR.$ Hence, $x=ax.$ It follows that $s=\deg(x)=\deg(a)\deg(x).$ 
Since $es=s,$ the cancellativity of $S$ implies that $\deg(a)=e.$ Further, note that 
then $y-ay\in xR+yR$ is a homogeneous element of $R,$ since $et=t.$ Now,
reasoning similarly, there exists a homogeneous idempotent element $b\in R,$ 
orthogonal to $a,$ and of degree $e,$ such that $(y-ay)R=bR.$ It follows that 
$c=b-ba$ is a homogeneous idempotent element of degree $e,$ which is also 
orthogonal to $a,$ and $cR=bR=(y-ay)R.$ Therefore, like in the non-graded case, 
$xR+yR=(a+c)R.$ This concludes the proof, since $\deg(a)=\deg(c)=e$ and $a+c$ is 
an idempotent element of $R.$
\end{proof}
Let us recall, if $R$ is an $S$-graded ring inducing $S,$ then $R$ is said to be 
\emph{graded semiprime} \cite{eig1} if for any homogeneous ideal $I$ of $R$ we 
have that $I^n\subseteq R$ implies that $I\subseteq R,$ where $n$ is a positive 
integer.
\begin{proposition}\label{propositiongs}
Let $R=\bigoplus_{s\in S}R_s$ be an $S$-graded ring inducing $S.$ If $R$ is graded 
von Neumann regular, then the following assertions hold:
\begin{itemize}
\item[$i)$] Every homogeneous right (left) ideal $I$ of $R$ is idempotent, that is, 
$I^2=I;$
\item[$ii)$] Every homogeneous two-sided ideal of $R$ is graded semiprime.
\end{itemize}
If, moreover, $R$ is nearly epsilon-strongly graded, then:
\begin{itemize} 
\item[$iii)$] The graded Jacobson radical $J^g(R)$ of $R$ is zero.
\end{itemize}
\end{proposition}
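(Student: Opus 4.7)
My overall plan is to prove $(i)$ directly from graded von Neumann regularity, reduce $(ii)$ to $(i)$ by passing to the quotient $R/I$, and, for $(iii)$, verify both conditions of Theorem~\ref{theoremj}$(b)$ in turn. The extra hypothesis that $R$ is nearly epsilon-strongly graded enters only in $(iii)$: it is there to supply the cancellativity of $S$ that Lemma~\ref{lemma1} and Theorem~\ref{theoremj} require, and to let Proposition~\ref{proposition1} produce homogeneous representatives with the expected structural properties.

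For $(i)$, I would take a homogeneous right ideal $I$ and a homogeneous element $x\in I.$ Graded von Neumann regularity yields $a\in R$ with $x=xax,$ and since $I$ is a right ideal containing $x$ one has $xa\in I,$ so $x=(xa)x\in I\cdot I=I^{2}.$ Because $I$ is the sum of its homogeneous components, this gives $I\subseteq I^{2}\subseteq I,$ hence $I=I^{2};$ the left-ideal case is symmetric. For $(ii)$, I would note that if $I$ is a homogeneous two-sided ideal, the quotient $R/I$ is again $S$-graded, and every homogeneous element of $R/I$ lifts to a homogeneous element of $R$ whose defining equation $x=xax$ descends. Thus $R/I$ is graded von Neumann regular, and applying $(i)$ to $R/I$ shows that each homogeneous ideal $\bar{K}$ of $R/I$ satisfies $\bar{K}=\bar{K}^{2},$ so by induction $\bar{K}=\bar{K}^{n}$ for every $n\geq 1.$ Consequently, any homogeneous nilpotent ideal of $R/I$ is zero, which is precisely the content of "$I$ is graded semiprime".

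For $(iii)$, I would apply Theorem~\ref{theoremj}$(b).$ Cancellativity of $S,$ built into the nearly epsilon-strongly graded hypothesis, lets Lemma~\ref{lemma1} conclude that $R_{e}$ is von Neumann regular for every $e\in I(S).$ Since every von Neumann regular ring is s-unital (Proposition~2.1 in \cite{dl}), Corollary~\ref{corollaryj} then gives $J(R_{e})=0,$ which is condition $(b)(i).$ For condition $(b)(ii),$ I take a nonzero homogeneous $x\in R_{s}$ and use Proposition~\ref{proposition1} to produce a homogeneous $y\in R_{t}$ with $x=xyx.$ Then $xy\in R_{st}$ is nonzero, for otherwise $x=(xy)x=0,$ and the computation $(st)(st)=s(tst)=st$ from the proof of Proposition~\ref{proposition2} shows that $st$ is an idempotent element of $S.$ Hence both conditions of Theorem~\ref{theoremj}$(b)$ hold, and $J^{g}(R)=0.$ The main obstacle I anticipate lies in the orchestration of $(iii)$: Theorem~\ref{theoremj}$(b)$ demands both a ring-theoretic fact about each component $R_{e}$ and an element-wise construction of a nonzero homogeneous product of idempotent degree. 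The key realization, which makes the second condition essentially free, is that the very $y$ produced by Proposition~\ref{proposition1} already forces $\deg(xy)=st$ to be idempotent; once this is noticed, no auxiliary element needs to be constructed and the proof assembles from the earlier machinery.
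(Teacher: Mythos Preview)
Your proof is correct. For parts $(i)$ and $(ii)$ your argument is essentially what the paper intends when it writes ``follow as in the non-graded case, see Corollary~1.2 in \cite{krg}''; your passage to the quotient $R/I$ for $(ii)$ is the standard reduction.

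For part $(iii)$ you take a genuinely different route from the paper. The paper argues by contradiction: it supposes $0\neq x\in J^{g}(R)$, uses the nearly epsilon-strongly graded hypothesis to ensure $xR$ is a principal homogeneous right ideal, then invokes Proposition~\ref{propositionvnrg} to produce a nonzero homogeneous idempotent $y$ of degree $e$ with $xR=yR\subseteq J^{g}(R)$, and finally applies Theorem~\ref{theoremj}$(a)$ to place $y\in J^{g}(R)\cap R_{e}=J(R_{e})=0$, a contradiction. You instead verify the two conditions of Theorem~\ref{theoremj}$(b)$ directly, using only cancellativity of $S$: Lemma~\ref{lemma1} and Corollary~\ref{corollaryj} dispatch condition $(b)(i)$, and the homogeneous $y$ supplied by Proposition~\ref{proposition1} automatically makes $xy$ a nonzero element of idempotent degree, giving $(b)(ii)$. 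Your approach is a bit more economical---it does not call on Proposition~\ref{propositionvnrg} and, as you note, uses the nearly epsilon-strongly graded hypothesis only through the cancellativity of $S$ (the remaining content of that hypothesis being automatic here by Proposition~\ref{proposition2}). The paper's route, on the other hand, illustrates concretely how the idempotent-generator characterization interacts with the graded radical via Theorem~\ref{theoremj}$(a)$.
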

\begin{proof}
$i)$ and $ii)$ follow as in the non-graded case, see Corollary~1.2 in \cite{krg}. As 
for $iii),$ let us assume that there exists a nonzero homogeneous element 
$0\neq x\in J^g(R)$ of degree, say $s.$ Since $R$ is by the hypothesis nearly 
epsilon-strongly graded, there exists an element $\epsilon'(x)\in R_{s^{-1}}R_s$ 
such that $x=x\epsilon'(x).$ Then $xR$ is a homogeneous right ideal of $R$ 
generated by $x.$ By Proposition~\ref{propositionvnrg}, there exists a 
homogeneous idempotent element $0\neq y\in R$ such that $xR=yR.$ Since $y$ is
a nonzero idempotent element of $R,$ its degree is a nonzero idempotent of $S.$
Let $\deg(y)=e.$ Again, since $R$ is nearly epsilon-strongly graded, and since
$e^{-1}=e,$ by the cancellativity of $S,$ there exists 
$\epsilon'(y)\in R_{e^{-1}}R_e\subseteq R_e$ such that $y=y\epsilon'(y).$ Now, 
$x\in J^g(R),$ and so, we get that $xR=yR\subseteq J^g(R).$ Hence, $y\in J^g(R).$ 
It follows that $y\in J^g(R)\cap R_e=J(R_e),$ according to Theorem~\ref{theoremj}. 
By Lemma~\ref{lemma1}, we have that $R_e$ is von Neumann regular. It follows by 
Corollary~\ref{corollaryj} that $J(R_e)=0.$ Hence, $y=0,$ a contradiction. 
Therefore, indeed $J^g(R)=0,$ as claimed.
\end{proof}
We now state and prove the main result of this section which characterizes graded 
von Neumann regular $S$-graded rings inducing $S$ in terms of nearly 
epsilon-strongly graded rings.
\begin{theorem}\label{theoremm}
Let $R=\bigoplus_{s\in S}R_s$ be an $S$-graded ring inducing $S$ with a 
cancellative $S.$ Then $R$ is graded von Neumann regular if and only if $R$ is
nearly epsilon-strongly graded and each ring component of $R$ is von Neumann
regular.
\end{theorem}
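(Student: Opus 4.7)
The proof splits into two directions; the forward implication is straightforward, so I would dispense with it first. Assuming $R$ is graded von Neumann regular, Lemma~\ref{lemma1} immediately gives that each $R_e$, $e\in I(S)$, is von Neumann regular, while Proposition~\ref{proposition2} gives that $R$ is nearly epsilon-strongly graded.

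The substance lies in the reverse implication. Suppose $R$ is nearly epsilon-strongly graded and each $R_e$ is von Neumann regular, and fix a homogeneous $x\in R_s$ with $s\in S^\times$. By Proposition~\ref{proposition1} it suffices to produce a single homogeneous $y\in R$ with $xyx=x$, and a short degree count (using (LRI) and cancellativity of $S$) forces $y\in R_{s^{-1}}$. The plan is to write $\epsilon'(x)=\sum_{j=1}^{m}c_jd_j$ with $c_j\in R_{s^{-1}}$ and $d_j\in R_s$, so that $x=x\epsilon'(x)=\sum_j(xc_j)d_j$. Setting $\alpha_j:=xc_j\in R_sR_{s^{-1}}\subseteq R_e$, where $e:=ss^{-1}$, I would then look at the finitely generated right ideal $J=\sum_j\alpha_jR_e$ of $R_e$.

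Since $R_e$ is von Neumann regular, it is s-unital, and Proposition~\ref{propositionvn} furnishes an idempotent $q\in R_e$ with $J=qR_e$. One therefore obtains elements $r_j,u_j\in R_e$ such that $q=\sum_j\alpha_jr_j$ and $\alpha_j=qu_j$ for every $j$. I would then define
\[
y:=\sum_jc_jr_j.
\]
The (LRI) relation $s^{-1}e=s^{-1}$ ensures $c_jr_j\in R_{s^{-1}}R_e\subseteq R_{s^{-1}}$, so $y$ is homogeneous of degree $s^{-1}$. A direct calculation gives $xy=\sum_j\alpha_jr_j=q$, while $x=\sum_j\alpha_jd_j=\sum_jqu_jd_j=qv$ with $v:=\sum_ju_jd_j\in R_eR_s\subseteq R_s$; hence
\[
xyx=qx=q(qv)=q^2v=qv=x,
\]
as required.

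The main obstacle I foresee is conceptual rather than computational: the natural candidates $\epsilon(x)\in R_sR_{s^{-1}}$ and $\epsilon'(x)\in R_{s^{-1}}R_s$ are sums of products, not single products of the form $xy'$ or $y'x$, so neither directly yields a homogeneous $y$. The way around this is to bundle the first factors $xc_j$ of the terms of $\epsilon'(x)$ into a finitely generated right ideal of the component $R_e$ and to exploit von Neumann regularity of $R_e$ in order to collapse this ideal to a single idempotent generator $q$; the elements $r_j$ produced along the way can then be reassembled with the $c_j$ into a single homogeneous $y\in R_{s^{-1}}$ with $xy=q$, after which the identity $q^2=q$ closes the argument.
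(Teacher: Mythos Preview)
Your proof is correct. The overall strategy matches the paper's: Lemma~\ref{lemma1} and Proposition~\ref{proposition2} handle the forward direction, and for the converse both arguments expand $\epsilon'(x)=\sum c_jd_j$ and exploit von Neumann regularity of an idempotent component via Proposition~\ref{propositionvn} to manufacture an idempotent acting as a one-sided unit on $x$. The organization differs in a way worth noting. The paper works in $R_f$ (with $f=s^{-1}s$): it shows the left ideal $R_{s^{-1}}x$ of $R_f$ is finitely generated, obtains an idempotent $a\in R_f$ with $R_{s^{-1}}x=R_fa$, writes $a=yx$ for some $y\in R_{s^{-1}}$, and finishes with $x=xa=xyx$. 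You work instead in $R_e$ (with $e=ss^{-1}$) with the manifestly finitely generated right ideal $J=\sum_j(xc_j)R_e$, obtain an idempotent $q$ with $J=qR_e$, assemble $y=\sum_j c_jr_j\in R_{s^{-1}}$ so that $xy=q$, and close with $qx=x$. Your route is slightly more direct: the finite generation of $J$ is immediate from the finite expression for $\epsilon'(x)$, whereas the paper passes through the assertion that $R_{s^{-1}}$ is finitely generated as a left $R_f$-module, which as literally stated is stronger than what nearly epsilon-strong grading gives (the intended conclusion, that $R_{s^{-1}}x$ is finitely generated, can still be recovered by writing $wx=w\epsilon(x)x$ for $w\in R_{s^{-1}}$).
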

\begin{proof}
$(\Rightarrow)$ Let $R$ be graded von Neumann regular. Then, by 
Proposition~\ref{proposition2} we have that $R$ is nearly epsilon-strongly graded.
Also, Lemma~\ref{lemma1} implies that $R_e$ is von Neumann regular for every
$e\in I(S).$\\
\indent $(\Leftarrow)$ Let us assume that $R$ is nearly epsilon-strongly graded
and that $R_e$ is a von Neumann regular ring for every idempotent element 
$e\in S.$ We may use the approach of the proof of Theorem~3 in \cite{hy} (see 
also the proofs of Lemma~3.9 and Proposition~3.10 in \cite{dl}) in order to prove
that $R$ is graded von Neumann regular. Let $x\in R$ be a nonzero homogeneous 
element of $R,$ say $x\in R_s.$ Since $R$ is nearly epsilon-strongly graded, there
exist unique $s^{-1}\in S$ and unique idempotent elements $e$ and $f$ from $S$
such that $ss^{-1}=e,$ $s^{-1}s=f,$ $fs^{-1}=s^{-1}e=s^{-1},$ and $es=sf=s.$ 
Then, $R_{s^{-1}}x$ is a left ideal of $R_f.$ Indeed, since $s^{-1}s=f,$ we
get that $R_{s^{-1}}x\subseteq R_{s^{-1}}R_s\subseteq R_f.$ It is clear that 
$R_{s^{-1}}x$ is an additive subgroup of $R_f.$ Now, take 
$r_{s^{-1}}x\in R_{s^{-1}}x$ and $r_f\in R_f$ arbitrarily. Then 
$r_fr_{s^{-1}}x=(r_fr_{s^{-1}})x\in(R_fR_{s^{-1}})x\subseteq R_{fs^{-1}}x.$ 
However, $fs^{-1}=s^{-1}.$ Therefore, $r_fr_{s^{-1}}x\in R_{s^{-1}}x.$ So, 
$R_{s^{-1}}x$ is indeed a left ideal of $R_f.$ We claim that $R_{s^{-1}}x$ is 
generated by an idempotent element in $R_f.$ 
By the hypotheses, $R$ is nearly epsilon-strongly graded. Hence, there exists 
$\epsilon'(x)\in R_{s^{-1}}R_s$ such that $x\epsilon'(x)=x.$ It follows that 
$\epsilon'(x)=\sum_{i=1}^nx_iy_i,$ for some $x_i\in R_{s^{-1}}$ and $y_i\in R_s,$ 
where $n$ is some positive integer. We get 
$x=x\epsilon'(x)=\sum_{i=1}^nxx_iy_i=\sum_{i=1}^nr_iy_i,$ where 
$r_i:=xx_i\in R_sR_{s^{-1}}\subseteq R_e,$ $i=1,\dots,n.$ Note that for each $i$
there exists $\epsilon(y_i)\in R_sR_{s^{-1}}\subseteq R_e$ such that 
$y_i=\epsilon(y_i)y_i\in R_ey_i.$ Therefore, for every $t\in S^\times,$ we have 
that $R_t$ is finitely generated as a left $R_g$-module, where $g\in I(S)$ is such 
that $gt=t.$ In particular, $R_{s^{-1}}$ is finitely generated as a left $R_f$-module, 
say by $\{v_1,\dots,v_n\}\subseteq R_{s^{-1}}$ for some positive integer $n.$ 
Then $R_{s^{-1}}x=\sum_{i=1}^nR_fv_ix.$ However, $s^{-1}s=f,$ and so, 
$v_ix\in R_f,$ $i=1,\dots,n.$ Therefore, $R_{s^{-1}}x$ is a finitely generated left 
ideal of $R_f.$ Since $R_f$ is von Neumann regular, by 
Proposition~\ref{propositionvn} we get that $R_{s^{-1}}x$ is indeed generated by 
an idempotent element of $R_f.$ Hence, $R_{s^{-1}}x=R_fa,$ for some 
$a^2=a\in R_f,$ since $a=a^2\in R_fa.$ Therefore, $a=yx,$ for some 
$y\in R_{s^{-1}}.$ Moreover, we have that 
$R_sR_{s^{-1}}x=R_sR_fa\subseteq R_sa$ since $sf=s.$ Since 
$x=\epsilon(x)x\in R_sR_{s^{-1}}x,$ it follows that there exists $z\in R_s$ such 
that $x=za.$ Therefore, $xa=(za)a=za=x,$ and so, $x=xa=xyx.$ Since $x$ was 
chosen arbitrarily, $R$ is indeed graded von Neumann regular.
\end{proof}
We finish this section by characterizing graded von Neumann regular $S$-graded
rings inducing $S,$ which are \emph{epsilon-strongly graded}, thus generalizing 
Corollary~3.11 in \cite{dl} to $S$-graded rings inducing $S.$
\begin{definition}\label{definitionesg}
Let $R=\bigoplus_{s\in S}R_s$ be an $S$-graded ring inducing $S,$ where $S$ is
cancellative. Then $R$ is said to be \emph{epsilon-strongly graded} if the following 
conditions are satisfied:
\begin{itemize}
\item[$i)$] $S$ satisfies $(\mathrm{LRI});$
\item[$ii)$] For every $s\in S,$ there exists $\epsilon(s)\in R_sR_{s^{-1}}$ such
that for every $x\in R_s$ we have that $\epsilon(s)x=x=x\epsilon(s^{-1}).$
\end{itemize}
\end{definition}
\begin{corollary}
Let $R=\bigoplus_{s\in S}R_s$ be an epsilon-strongly $S$-graded ring inducing $S.$
Then $R$ is graded von Neumann regular if and only if each ring component of $R$ is 
von Neumann regular.
\end{corollary}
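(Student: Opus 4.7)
The plan is to obtain this as an immediate consequence of Theorem~\ref{theoremm}. The heart of the matter is to verify that an epsilon-strongly $S$-graded ring is automatically nearly epsilon-strongly graded; once this is established, the corollary reduces to invoking Theorem~\ref{theoremm}.

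First, I would unpack both definitions side by side. Definition~\ref{definitionesg} and Definition~\ref{definitiones} both presuppose the same cancellativity assumption on $S$ and the same (LRI) condition, so condition $i)$ of Definition~\ref{definitiones} is satisfied by hypothesis. The substantive observation is that the element $\epsilon(s)$ guaranteed in Definition~\ref{definitionesg} is a \emph{uniform} left identity for $R_s$ lying in $R_sR_{s^{-1}}$, and $\epsilon(s^{-1})$, which lies in $R_{s^{-1}}R_s$ (since $(s^{-1})^{-1}=s$), acts as a uniform right identity for $R_s$. Hence, for any $s\in S$ and $x\in R_s,$ taking $\epsilon(x):=\epsilon(s)\in R_sR_{s^{-1}}$ and $\epsilon'(x):=\epsilon(s^{-1})\in R_{s^{-1}}R_s$ gives $\epsilon(x)x=x=x\epsilon'(x),$ which is exactly condition $ii)$ of Definition~\ref{definitiones}. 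So $R$ is nearly epsilon-strongly graded.

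With this reduction in hand, the forward direction follows from Lemma~\ref{lemma1}: if $R$ is graded von Neumann regular, then $R_e$ is von Neumann regular for every $e\in I(S)$. For the reverse direction, if every component $R_e$ with $e$ idempotent is von Neumann regular, Theorem~\ref{theoremm} combined with the observation above yields that $R$ is graded von Neumann regular.

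There is no genuine obstacle here; the only point worth flagging is making sure the reader sees that the element witnessing the epsilon-strongly graded condition in the direction $x\mapsto x\epsilon(s^{-1})=x$ indeed lies in $R_{s^{-1}}R_s$, so that it plays the role of $\epsilon'(x)$ required by Definition~\ref{definitiones}. This is simply the observation that applying the definition to $s^{-1}$ in place of $s$ produces $\epsilon(s^{-1})\in R_{s^{-1}}R_{(s^{-1})^{-1}}=R_{s^{-1}}R_s$, using the uniqueness of inverses in $S$ that follows from cancellativity and (LRI).
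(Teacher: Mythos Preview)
Your proposal is correct and follows exactly the paper's approach: the paper's proof simply notes that every epsilon-strongly graded ring is obviously nearly epsilon-strongly graded and then invokes Theorem~\ref{theoremm}. Your write-up merely spells out in detail the ``obvious'' implication (which is fine), and your separate appeal to Lemma~\ref{lemma1} for the forward direction is harmless but unnecessary, since Theorem~\ref{theoremm} already handles both directions.
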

\begin{proof}
It is obvious that every epsilon-strongly graded ring is nearly epsilon-strongly graded. 
Therefore, the statement is immediate by Theorem~\ref{theoremm}.
\end{proof}
\subsection{Pseudo-unitary $S$-graded rings inducing $S$}
It is already mentioned that a unital strongly group graded ring is graded von
Neumann regular if and only if the ring component is von Neumann regular 
(Corollary~C.I.1.5.3 in \cite{noy2}). The aim of this subsection is to characterize the 
graded von Neumann regularity of strongly graded $S$-graded rings inducing $S,$ 
which are \emph{pseudo-unitary}.
\begin{definition}[\cite{ha}]\label{definition}
Let $R=\bigoplus_{s\in S}R_s$ be an $S$-graded ring inducing $S$ with a 
cancellative $S.$ We say that $R$ is \emph{pseudo-unitary} or \emph{pseudo-unital}
if the following conditions are satisfied:
\begin{itemize}
\item[$i)$] For every $e\in I(S),$ the ring $R_e$ is a ring with unity $1_e;$
\item[$ii)$] For every $x\in H_R$ there exist $e,$ $f\in I(S)$ such that 
$1_ex=x=x1_f.$
\end{itemize}
\end{definition}
\begin{remark}\label{remarku}
Note that if $R$ is a pseudo-unitary $S$-graded ring inducing $S,$ then $ef=0$ for
all distinct idempotent elements $e,$ $f\in S.$ As we have already recalled, the 
Leavitt path algebra of a graph $E$ is unital if and only if $E$ is finite. This is the 
property shared by the pseudo-unitary rings in general, that is, a pseudo-unitary ring 
is a ring with unity $1$ if and only if $I(S)$ is finite, and in that case, 
$1=\sum_{e\in I(S)}1_e,$ see \cite{ha1,ha}.\\ 
\indent Also, note that if $R$ is epsilon-strongly graded, then it is pseudo-unitary. 
This can be verified similarly to the fact that every epsilon-strongly group-graded ring 
is unital (Proposition~3.8 in \cite{dl1}). Moreover, it can be easily seen that every 
strongly graded pseudo-unitary $S$-graded ring inducing $S,$ for which the condition 
(LRI) holds, is epsilon-strongly graded. The converse does not hold as it is known 
from the group graded case (see Example~2.8 in \cite{dl}).
\end{remark}
\indent The notion of pseudo-unitaryness also served as a motivation to introduce 
the notion of a \emph{pseudo-unitary homogeneous semigroup} in \cite{eig8}. For
the study of \emph{homogeneous semigroups} \cite{eig5} in case when 
they are graded by a group or by a small category all of whose morphisms are 
invertible, we refer the reader to \cite{chr,hm}. Cayley graphs of 
homogeneous semigroups are studied in \cite{eig8,eig10}, inspired by the results 
which can be found in \cite{ak11}.\\
\indent A similar concept to pseudo-unitary rings has been used in 
\cite{amr,am,am1}. Namely, let 
$S$ be an l.i.-semigroup, that is, a semigroup with zero $0$ for which there exists a 
set $E$ of nonzero orthogonal idempotent elements such that for every 
$s\in S^\times$ there exist $e,$ $f\in E$ such that $es=sf=s.$ A ring graded by an 
l.i.-semigroup is said to be \emph{locally unital} \cite{amr,am,am1} if $R_0=0$ and if 
for every idempotent element $e\in E$ there exists an element $1_e\in R_e$ such 
that for every $s\in S^\times$ with $esf=s,$ where $e,$ $f\in E,$ and every 
$x\in R_s$ we have that $1_ex=x1_f=x.$ Therefore, 
in case $S$ is a cancellative semigroup, a pseudo-unitary ring is locally unital, and if 
$I(S)$ is moreover finite, it is unital, and vice-versa.
\begin{example}
Let $R$ be a ring with unity $1,$ and let $\mathbb{M}_2(R)$ be the ring of 
$2\times 2$ matrices with the usual matrix addition and multiplication over $R,$ and 
observe $\mathbb{M}_2(R)$ as an $S$-graded ring inducing $S$ as described in 
Example~\ref{examplenesg}. It is strongly graded since $R$ is with unity. The ring 
components are $(\mathbb{M}_2(R))_{(1,1)}$ and $(\mathbb{M}_2(R))_{(2,2)}.$ 
They are both unital with unities 
$1_{(1,1)}=\left(%
\begin{array}{cc}
  1 & 0 \\
  0 & 0 \\
\end{array}%
\right)$
and
$1_{(2,2)}=\left(%
\begin{array}{cc}
  0 & 0 \\
  0 & 1 \\
\end{array}%
\right),$
respectively. Moreover, it can be easily checked that 
$1_{(1,1)}(\mathbb{M}_2(R))_{(1,2)}=(\mathbb{M}_2(R))_{(1,2)}=
(\mathbb{M}_2(R))_{(1,2)}1_{(2,2)}$ and 
$1_{(2,2)}(\mathbb{M}_2(R))_{(2,1)}=(\mathbb{M}_2(R))_{(2,1)}=
(\mathbb{M}_2(R))_{(2,1)}1_{(1,1)}.$ So, $\mathbb{M}_2(R)$ is 
pseudo-unitary since $S$ is cancellative.\\
\indent This can be easily generalized to the full matrix ring $\mathbb{M}_n(R)$
of $n\times n$ matrices over $R,$ for a natural number $n.$ In 
particular, if $K$ is a field, then $\mathbb{M}_n(K)$ and $L_K(A_n),$ where $A_n$ 
is the oriented $n$-line graph having $n$ vertices and $n-1$ edges, are isomorphic 
as $S$-graded rings inducing $S,$ and are graded von Neumann regular (see 
Section~\ref{introduction}). Notice that the ring components of 
$L_K(A_n)$ are isomorphic to $K,$ hence, von Neumann regular, and that for every 
nonzero homogeneous element $x$ there exists a homogeneous element $y$ such 
that $xy$ is a nonzero homogeneous element of an idempotent degree. If $E$ is a 
directed graph, another example of a strongly graded von Neumann regular ring which 
is pseudo-unitary, with these two properties, is the Cohn path algebra of $E,$ that is, 
the contracted semigroup ring $K_0[S(E)]$ (see for instance \cite{hm}) where 
$S(E)$ is the graph inverse semigroup of $E$ (see \cite{ah}). We will soon prove 
that these two conditions are both necessary and sufficient for a strongly graded 
pseudo-unitary $S$-graded ring inducing $S$ to be graded von Neumann regular.
\end{example}
\begin{remark}
It is well-known that a unital ring $R=\bigoplus_{g\in G}R_g,$ graded by a group 
$G,$ is strongly graded if and only if its unity belongs to $R_gR_{g^{-1}}$ for
every $g\in G$ (see for instance Proposition~1.1.1 in \cite{noy}). In the previous
example, $1_{(i,i)}\in(\mathbb{M}_2(R))_{(i,j)}(\mathbb{M}_2(R))_{(j,i)}$ for all 
$i,j=1,2,$ and $S$ satisfies (LRI). Let $R$ be a pseudo-unitary $S$-graded ring 
inducing $S,$ and let us assume that $S$ satisfies $\mathrm{(LRI)}.$ Then we note 
that $R$ is strongly graded if and only if $1_e\in R_sR_{s^{-1}}$ for every 
$e\in I(S)^\times$ and every $s\in S$ such that $ss^{-1}=e.$ Namely, let us assume 
that for every $e\in I(S)^\times$ and every $s\in S$ such that $ss^{-1}=e,$ we 
have that $1_e\in R_sR_{s^{-1}}.$ Let $s,$ $t\in S$ be such that $R_sR_t\neq0.$ 
Also, let $e,$ $f\in I(S)^\times$ be such that $tt^{-1}=e,$ $t^{-1}t=f,$ $et=t=tf,$ 
and $ft^{-1}=t^{-1}=t^{-1}e.$ Since $0\neq R_sR_t=R_s(R_t1_f)\subseteq
R_sR_tR_f,$ it follows by Remark~\ref{remarkassoc} that $st=s(tf)=(st)f.$ Also, 
since $R_sR_t\neq0,$ we have that $R_sR_eR_t\neq0$ since 
$R_sR_t=R_s(1_eR_t)\subseteq R_sR_eR_t.$ Hence, we get that $st=s(et)=(se)t.$ 
The cancellativity of $S$ implies that $se=s.$ Hence, 
$0\neq R_s=R_s1_e\subseteq R_sR_tR_{t^{-1}}.$
Therefore, $s=se=s(tt^{-1})=(st)t^{-1}.$ It follows that 
$R_{st}=R_{st}1_f\subseteq R_{st}R_{t^{-1}}R_t\subseteq R_{(st)t^{-1}}R_t
\subseteq R_sR_t.$ Hence, $R_sR_t=R_{st}.$ The converse statement is obvious.
\end{remark}
\indent We now state and prove a characterization of graded von Neumann regular
$S$-graded rings inducing $S$ which are strongly graded and pseudo-unitary, thus
generalizing a well-known group grading counterpart, namely Corollary~C.I.1.5.3 in 
\cite{noy2}. 
\begin{theorem}\label{corollary}
Let $R=\bigoplus_{s\in S}R_s$ be a strongly graded pseudo-unitary $S$-graded ring 
inducing $S.$ Then $R$ is graded von Neumann regular if and only if the following 
conditions are satisfied:
\begin{itemize}
\item[$a)$] For every nonzero homogeneous element $x\in R$ there exists a 
homogeneous element $y\in R$ such that $xy$ is a nonzero homogeneous element 
of $R$ of an idempotent degree;
\item[$b)$] Each ring component of $R$ is von Neumann regular.
\end{itemize}
\end{theorem}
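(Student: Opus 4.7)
The plan is to reduce Theorem~\ref{corollary} to machinery already in place, in particular Lemma~\ref{lemma1}, Proposition~\ref{proposition1}, Theorem~\ref{theoremj}, Corollary~\ref{corollaryj}, Theorem~\ref{theoremm}, and the observation in Remark~\ref{remarku} that a strongly graded pseudo-unitary $S$-graded ring inducing $S$ for which (LRI) holds is epsilon-strongly graded.

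For the forward direction, condition (b) is immediate from Lemma~\ref{lemma1}. For (a), let $x\in R$ be a nonzero homogeneous element of degree $s.$ By Proposition~\ref{proposition1}, there exists a homogeneous $y\in R$ with $x=xyx;$ in particular $y\neq 0$ and $xy\neq 0,$ for otherwise $x$ would be zero. Associativity of the ring multiplication gives $(xy)^2=(xyx)y=xy,$ so $xy$ is a nonzero homogeneous idempotent. Writing $\deg(y)=t,$ we then have $\deg(xy)=st,$ and from $(xy)^2=xy\neq 0$ it follows that $(st)(st)=st,$ so $\deg(xy)$ is an idempotent of $S.$

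For the converse, the key observation is that condition (a) is precisely the second bullet of Theorem~\ref{theoremj}(b), while condition (b), combined with Corollary~\ref{corollaryj}, yields $J(R_e)=0$ for every $e\in I(S),$ which is the first bullet. Hence Theorem~\ref{theoremj}(b) gives $J^g(R)=0.$ Since the paper's convention ensures $R_s\neq 0$ for every $s\in S^\times,$ we have $R_s\not\subseteq J^g(R),$ and Theorem~\ref{theoremj}(c) supplies the element $s^{-1}$ together with idempotents $e,f\in I(S)$ satisfying the (LRI) condition for $s.$ Thus $S$ satisfies (LRI), and Remark~\ref{remarku} then implies that $R$ is epsilon-strongly graded, in particular nearly epsilon-strongly graded. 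Combining this with (b), Theorem~\ref{theoremm} yields that $R$ is graded von Neumann regular.

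The main subtle point is the converse, and more specifically the recognition that hypotheses (a) and (b) are tailored to trigger $J^g(R)=0$ via Theorem~\ref{theoremj}(b), which in turn hands us (LRI) via part (c) of the same theorem. Once (LRI) is in hand, the pseudo-unitary and strongly graded assumptions upgrade the ring to being epsilon-strongly graded via Remark~\ref{remarku}, and Theorem~\ref{theoremm} closes the loop; no further computation is required.
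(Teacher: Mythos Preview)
Your proof is correct. The converse direction is essentially identical to the paper's: both use conditions (a) and (b) to force $J^g(R)=0$ via Theorem~\ref{theoremj}(b), then extract (LRI) from Theorem~\ref{theoremj}(c), then pass to (nearly) epsilon-strongly graded, and conclude with Theorem~\ref{theoremm}.

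The forward direction differs. The paper derives (a) by first invoking Theorem~\ref{theoremm} to obtain (LRI), then using pseudo-unitarity and strong grading to write $x=x1_f=\sum_i(xx_i)y_i$ with $1_f=\sum_i x_iy_i\in R_{s^{-1}}R_s$, and noting that some $xx_i\in R_{ss^{-1}}$ must be nonzero. Your argument is more direct and more elementary: from $x=xyx$ you observe that $xy$ is a nonzero homogeneous idempotent, hence automatically of idempotent degree in $S$. This bypasses the pseudo-unitary and strongly graded hypotheses entirely for the forward implication (they are only needed for the converse), and in fact shows that condition (a) follows from graded von Neumann regularity alone for any $S$-graded ring inducing $S$ with cancellative $S$. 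The paper's route, by contrast, illustrates how the structural hypotheses produce the witnessing element, which is thematically closer to the classical strongly-group-graded argument it is generalizing.
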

\begin{proof}
$(\Rightarrow)$ Let $R$ be graded von Neumann regular. Then we get $b)$ by
Theorem~\ref{theoremm}. Moreover, by the same theorem we get that $R$ is nearly 
epsilon-strongly graded. Therefore, $S$ satisfies (LRI). Let $x\neq0$ be a 
homogeneous element of $R,$ say $x\in R_s.$ Also, let $s^{-1}\in S$ and 
$e, f\in I(S)$ be such that $ss^{-1}=e,$ $s^{-1}s=f,$ $es=sf=s$ and 
$fs^{-1}=s^{-1}e=s^{-1}.$ Then, since $R$ is pseudo-unitary,
$x=x1_f.$ By the hypothesis, $R$ is strongly graded. Therefore, 
$1_f\in R_{s^{-1}}R_s.$ It follows that $1_f=\sum_{i=1}^nx_iy_i,$ for some
$x_i\in R_{s^{-1}},$ $y_i\in R_s$ and a positive integer $n.$ Now,
$x=x1_f=\sum_{i=1}^n(xx_i)y_i.$ Since $x\neq0,$ we get that there exists $i$
such that $xx_i\neq0.$ On the other hand, $xx_i\in R_sR_{s^{-1}}=R_e,$ that is,
$xx_i$ is a nonzero homogeneous element of an idempotent degree. Hence, $a)$ 
holds true as well.\\
\indent $(\Leftarrow)$ Let us assume that $a)$ and $b)$ hold. Since $R$ is by the
hypothesis pseudo-unitary, for every $e\in I(S)$ we have that $R_e$ is a ring with 
unity $1_e.$ According to Corollary~\ref{corollaryj}, the condition $b)$
implies that $J(R_e)=0$ for every idempotent element $e\in S.$ This, together
with $a)$ implies that $J^g(R)=0,$ according to Theorem~\ref{theoremj}$b)$. Take 
$s\in S^\times.$ Then $R_s$ is not contained in $J^g(R).$ Hence, by 
Theorem~\ref{theoremj}$c),$ we have that there exist elements $e,$ $f\in I(S)$ 
and an element $s^{-1}\in S$ such that $es=sf=s,$ $fs^{-1}=s^{-1}e=s^{-1},$ 
$ss^{-1}=e,$ and $s^{-1}s=f.$ Therefore, $S$ satisfies (LRI).
It follows that $1_ex=x=x1_f$ for every $x\in R_s.$ However, $R$ is strongly 
graded. Hence, $1_e\in R_e=R_sR_{s^{-1}}$ and $1_f\in R_f=R_{s^{-1}}R_s.$ 
Therefore, $ii)$ of Definition~\ref{definitiones} holds true as well, which implies that 
$R$ is nearly epsilon-strongly graded (as already concluded in 
Remark~\ref{remarku}). Thus, $R$ is graded von Neumann regular by 
Theorem~\ref{theoremm}.
\end{proof}
\section{Proof of Theorem~\ref{theoremmain}}\label{Leavitt}
Throughout this section, we keep the notation and agreements set in 
Section~\ref{gleavitt}. So, $R$ is a unital ring, 
$E=(E^0,E^1,\mathrm{r},\mathrm{s})$ a directed graph, 
$S=M(\mathbb{Z},I)\cup\{0\}$ a Brandt semigroup over $\mathbb{Z}$ (although 
some statements in this section hold true for a Brandt semigroup over an arbitrary 
group), and the Cohn path algebra $C_R^X(E)$ with respect to 
$X\subseteq\mathrm{Reg}(E)$, in particular, the Leavitt path algebra $L_R(E),$ is 
observed as a canonically $S$-graded ring with respect to the weight mapping 
$w:E^*\cup\{\mu^*\ |\ \mu\in E^*\}\to S^\times,$ in the sense of 
Proposition~\ref{propositiongrading} and Definition~\ref{definitionc}. Let us recall 
that $L_R(E)$ is equipped with an anti-graded involution mapping $^*$ that sends 
each $x=\sum_{i=1}^nr_i\mu_i\eta^*_i\in L_R(E)$ to
$x^*=\sum_{i=1}^nr_i\eta_i\mu^*_i\in L_R(E)$ (see the proof of 
Proposition~\ref{propositiongrading}).\\
\indent Following the approach of both \cite{rh} and \cite{dl}, we prove 
Theorem~\ref{theoremmain} by establishing it first for the special case of a finite 
graph. The case of an arbitrary graph is handled by observing $L_R(E)$ as 
a direct limit of Leavitt path algebras over finite graphs (similarly to \cite{rh} and 
\cite{dl}).\\
\indent One of the key steps in the proof is to establish that $L_R(E)$ is nearly 
epsilon-strongly graded.
\subsection{Nearly epsilon-strongly groupoid graded Leavitt path algebras}
\indent Theorem~4.2 in \cite{no} asserts that any group graded Leavitt path 
algebra is nearly epsilon-strongly graded. The approach of preordering 
the set of all monomials $\mu\eta^*,$ where $\mu,$ $\eta\in E^*$ and 
$\mathrm{r}(\mu)=\mathrm{r}(\eta),$ taken in \cite{no}, works perfectly fine in 
proving that the Leavitt path algebra, observed as an $S$-graded ring, is nearly 
epsilon-strongly graded in the sense of Definition~\ref{definitionesg}. We include the
key steps for the readers' convenience.
\begin{definition}[cf.~Definition~4.3 in \cite{no}]
Let $X$ denote the set of all monomials $\mu\eta^*,$ where $\mu,$ 
$\eta\in E^*$ and $\mathrm{r}(\mu)=\mathrm{r}(\eta).$ Take $s\in S$ and put 
$X_s=\{x\in X\ |\ w(x)=s\}.$ Suppose that $\mu,$ $\eta,$ $\zeta,$ 
$\theta\in E^*$ are such that $\mu\eta^*,$ $\zeta\theta^*\in X_s.$ Then we put
$\mu\eta^*\leq\zeta\theta^*$ if and only if $\mu$ is the initial subpath of $\zeta.$ 
Also, we define $\mu\eta^*\sim\zeta\theta^*$ if and only if $\mu=\zeta.$
\end{definition}
Of course, for every $s\in S,$ we have that $(L_R(E))_s$ is the 
$R$-linear span of $X_s.$\\
\indent It can be easily verified that the following statements hold 
(cf.~Proposition~4.2 in \cite{no}):
\begin{itemize}
\item[$a)$] The relation $\leq$ is a preorder on $X_s;$
\item[$b)$] The relation $\sim$ is an equivalence relation on $X_s;$
\item[$c)$] The quotient relation $\preceq$ on the factor set $X_s/\sim,$ induced
by $\leq,$ is a partial order.
\end{itemize}
Let $x=\mu\eta^*\in X_s,$ and let $e\in I(S)$ be such that $ss^{-1}=e.$ According
to the proof of Proposition~\ref{propositiongrading}, we have that $w(xx^*)=e.$
Having this in mind, the proofs of the next two results can be carried out as in 
\cite{no} and are omitted.
\begin{proposition}[cf.~Proposition~4.3 in \cite{no}]
Let $s\in S$ and $e\in I(S)$ be such that $ss^{-1}=e.$ Then, the mapping 
$\mathcal{N}_s: X_s/\sim\to(L_R(E))_e,$
given by $\mathcal{N}_s([x]_\sim)=xx^*$ $([x]_\sim\in X_s/\sim),$ is well
defined.
\end{proposition}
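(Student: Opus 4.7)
The plan is to establish well-definedness in two pieces: first that $xx^*$ indeed lands in $(L_R(E))_e,$ and second that the value depends only on the $\sim$-class of $x.$ Both reduce to a direct computation using the defining relations of $L_R(E)$ and the anti-graded involution established in Proposition~\ref{propositiongrading}.

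For the codomain claim, I would argue as follows. Take $x\in X_s,$ so $w(x)=s,$ meaning $x\in (L_R(E))_s.$ By the anti-graded property of the involution, $x^*\in(L_R(E))_{s^{-1}}.$ Since $L_R(E)$ is $S$-graded,
\[
xx^*\in (L_R(E))_s(L_R(E))_{s^{-1}}\subseteq (L_R(E))_{ss^{-1}}=(L_R(E))_e,
\]
which handles membership.

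For the independence of representatives, write $x=\mu\eta^*$ with $\mu,\eta\in E^*$ and $\mathrm{r}(\mu)=\mathrm{r}(\eta).$ Then $x^*=\eta\mu^*$ and
\[
xx^*=\mu\eta^*\eta\mu^*.
\]
The key simplification is $\eta^*\eta=\mathrm{r}(\eta),$ which I would prove by induction on the length of $\eta,$ using repeatedly relation~$3)$ of Definition~\ref{definitionlpa} (i.e., $\alpha^*\alpha=\mathrm{r}(\alpha)$) together with relation~$2)$ to absorb the resulting range vertices into the adjacent edges; for the base case $\eta\in E^0$ one invokes $v^*=v$ and $vv=v.$ Using $\mathrm{r}(\eta)=\mathrm{r}(\mu)$ and $\mu\,\mathrm{r}(\mu)=\mu,$ the expression collapses to $xx^*=\mu\mu^*.$ Consequently, if $\mu\eta^*\sim\zeta\theta^*,$ that is $\mu=\zeta,$ then
\[
(\mu\eta^*)(\mu\eta^*)^*=\mu\mu^*=\zeta\zeta^*=(\zeta\theta^*)(\zeta\theta^*)^*,
\]
so $\mathcal{N}_s$ is well defined.

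The only genuine ingredient here is the identity $\eta^*\eta=\mathrm{r}(\eta),$ but this is a standard inductive verification from the defining relations and is not a real obstacle. No grading-theoretic input beyond the anti-graded property of $^*$ and the basic axioms of an $S$-graded ring inducing $S$ is needed, which is why, as the authors note, the proof parallels Proposition~4.3 in \cite{no} and can safely be omitted in the main text.
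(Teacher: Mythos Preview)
Your proposal is correct and follows essentially the same approach as the paper: the paper omits the proof but notes just before the statement that $w(xx^*)=e$ (from the proof of Proposition~\ref{propositiongrading}), and then defers to Proposition~4.3 in \cite{no}, whose argument is precisely the computation $xx^*=\mu\eta^*\eta\mu^*=\mu\mu^*$ you spell out. There is nothing to add.
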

\begin{proposition}[cf.~Proposition~4.4 in \cite{no}]\label{lemmaa}
Let $s\in S$ and $x,$ $y\in X_s.$ Then the following statements hold:
\begin{itemize}
\item[$a)$] If $[x]_\sim\preceq[y]_\sim,$ then $\mathcal{N}_s([x]_\sim)y=y;$
\item[$b)$] If $[x]_\sim\npreceq[y]_\sim$ and $[y]_\sim\npreceq[x]_\sim,$
then $\mathcal{N}_s([x]_\sim)y=0.$
\end{itemize}
\end{proposition}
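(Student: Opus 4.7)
The plan is to reduce both statements to direct manipulations in the Cohn path algebra using the relations of Definition~\ref{definitionlpa}, after first rewriting $\mathcal{N}_s([x]_\sim)$ in a convenient form. Write $x=\mu\eta^*$ and $y=\zeta\theta^*$ with $\mathrm{r}(\mu)=\mathrm{r}(\eta)$ and $\mathrm{r}(\zeta)=\mathrm{r}(\theta).$ Using Cohn relations $2)$ and $3)$ inductively on the length of $\eta,$ one verifies that $\eta^*\eta=\mathrm{r}(\eta)=\mathrm{r}(\mu),$ and therefore
\[
\mathcal{N}_s([x]_\sim)=xx^*=\mu\eta^*\eta\mu^*=\mu\,\mathrm{r}(\mu)\,\mu^*=\mu\mu^*.
\]
This identity does most of the work for both parts.

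For part $a),$ from $[x]_\sim\preceq[y]_\sim$ I would pick a representative of $[x]_\sim$ (use $\mu$ itself) and, by the definition of the preorder, write $\zeta=\mu\zeta'$ for some path $\zeta'$ in $E^*$ with $\mathrm{s}(\zeta')=\mathrm{r}(\mu)$ (possibly $\zeta'$ being the trivial path $\mathrm{r}(\mu)$ in case $\mu=\zeta$). Then I compute
\[
\mathcal{N}_s([x]_\sim)\,y=\mu\mu^*\,\mu\zeta'\theta^*=\mu\,(\mu^*\mu)\,\zeta'\theta^*=\mu\,\mathrm{r}(\mu)\,\zeta'\theta^*=\mu\zeta'\theta^*=\zeta\theta^*=y,
\]
where the third equality again uses $\mu^*\mu=\mathrm{r}(\mu),$ and $\mathrm{r}(\mu)\zeta'=\mathrm{s}(\zeta')\zeta'=\zeta'$ by Cohn relation $2).$

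Part $b)$ is the more delicate step, and I expect this to be the main obstacle, because one has to show $\mu\mu^*\zeta\theta^*=0,$ which boils down to proving $\mu^*\zeta=0$ under the hypothesis that neither $\mu$ is an initial subpath of $\zeta$ nor $\zeta$ is an initial subpath of $\mu.$ Writing $\mu=\alpha_1\cdots\alpha_k$ and $\zeta=\beta_1\cdots\beta_m,$ the incomparability hypothesis forces the existence of a smallest index $i\leq\min\{k,m\}$ with $\alpha_j=\beta_j$ for $j<i$ and $\alpha_i\neq\beta_i$ (if one of the paths were an initial segment of the other, we would be back in case $a)$; the vertex cases $k=0$ or $m=0$ are handled similarly using $\alpha^*\,v=\delta_{\mathrm{s}(\alpha),v}\alpha^*$ and the fact that a vertex of $E^0$ coincides with a path of length zero, which would again place us in case $a)$ if sources matched). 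Iterated application of Cohn relation $3),$ together with $\mathrm{r}(\alpha_j)=\mathrm{s}(\alpha_{j+1})$ and Cohn relation $2),$ telescopes the product
\[
\alpha_k^*\cdots\alpha_1^*\beta_1\cdots\beta_m=\alpha_k^*\cdots\alpha_i^*\beta_i\cdots\beta_m,
\]
and the final factor $\alpha_i^*\beta_i=\delta_{\alpha_i,\beta_i}\mathrm{r}(\alpha_i)=0$ closes out the argument. Multiplying on the left by $\mu$ and on the right by $\theta^*$ then yields $\mathcal{N}_s([x]_\sim)y=0,$ completing the proof.
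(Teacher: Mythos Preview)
Your argument is correct. The paper actually omits the proof of this proposition, deferring to Proposition~4.4 in \cite{no}; your computation---reducing $\mathcal{N}_s([x]_\sim)$ to $\mu\mu^*$ via $\eta^*\eta=\mathrm{r}(\eta)$, then using that $\mu$ is an initial subpath of $\zeta$ for part~$a)$ and the first differing edge together with relation~$3)$ for part~$b)$---is exactly the standard argument one finds there, and the edge cases you flag (vertices, differing sources) are handled as you indicate.
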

\begin{theorem}\label{theoremnesg}
Let $R$ be a unital ring, and let $E$ be a directed graph. Then the Leavitt path 
algebra $L_R(E)$ is nearly epsilon-strongly graded as an $S$-graded ring.
\end{theorem}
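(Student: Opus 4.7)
The plan is to verify the two conditions of Definition~\ref{definitiones}. Condition $i)$ is immediate: $S$ is a Brandt semigroup, which has been explicitly noted to satisfy (LRI). Thus the main content is condition $ii)$, and for this the preorder machinery developed in the preceding definition and in the two cited propositions (analogues of Propositions~4.3 and~4.4 in \cite{no}) is tailor-made.

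I would proceed as follows. Fix $s \in S^\times$ and a nonzero $x \in (L_R(E))_s$. Write $x$ as a finite $R$-linear combination $x = \sum_{i=1}^n r_i x_i$ with $x_i \in X_s$, and consider the finite set $M = \{[x_1]_\sim, \ldots, [x_n]_\sim\} \subseteq X_s/\sim$. Let $[m_1]_\sim, \ldots, [m_k]_\sim$ be the minimal elements of $M$ under $\preceq$, and define
\[
\epsilon(x) := \sum_{j=1}^{k} \mathcal{N}_s([m_j]_\sim) = \sum_{j=1}^{k} m_j m_j^*.
\]
By the proof of Proposition~\ref{propositiongrading}, each $m_j m_j^*$ lies in $(L_R(E))_s (L_R(E))_{s^{-1}}$, hence so does $\epsilon(x)$. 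To verify $\epsilon(x) x = x$, I would fix $i$ and analyse $\mathcal{N}_s([m_j]_\sim) x_i$ case by case using Proposition~\ref{lemmaa}: by minimality of the $[m_j]_\sim$ in $M$, no strict inequality $[x_i]_\sim \prec [m_j]_\sim$ holds, so Proposition~\ref{lemmaa}$(b)$ forces $\mathcal{N}_s([m_j]_\sim) x_i = 0$ whenever $[m_j]_\sim \not\preceq [x_i]_\sim$. On the other hand, there is always at least one minimal $[m_{j(i)}]_\sim$ with $[m_{j(i)}]_\sim \preceq [x_i]_\sim$, and Proposition~\ref{lemmaa}$(a)$ gives $\mathcal{N}_s([m_{j(i)}]_\sim) x_i = x_i$.

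The step I expect to require the most care is showing that this $j(i)$ is \emph{unique}, so that the sum does not over-count $x_i$. The point is a structural feature of the preorder: if $m = \mu\eta^*$ and $m' = \mu'\eta'^*$ are two representatives with $[m]_\sim, [m']_\sim \preceq [x_i]_\sim$, then both $\mu$ and $\mu'$ are initial subpaths of the left part of $x_i$, and initial subpaths of a fixed path are linearly ordered by the subpath relation. Hence $[m]_\sim$ and $[m']_\sim$ are comparable, and two minimal elements that are comparable must coincide. Consequently $\epsilon(x) x_i = \mathcal{N}_s([m_{j(i)}]_\sim) x_i = x_i$ for every $i$, and summing gives $\epsilon(x) x = x$.

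Finally, to produce $\epsilon'(x) \in (L_R(E))_{s^{-1}}(L_R(E))_s$ with $x\epsilon'(x) = x$, I would appeal to the anti-graded involution $^\ast$ established in Proposition~\ref{propositiongrading}. Since $x^* \in (L_R(E))_{s^{-1}}$, the construction above applied to $x^*$ yields $\epsilon(x^*) \in (L_R(E))_{s^{-1}}(L_R(E))_s$ satisfying $\epsilon(x^*) x^* = x^*$. Setting $\epsilon'(x) := (\epsilon(x^*))^*$ and taking $^\ast$ on both sides of that identity gives $x \epsilon'(x) = x$, while the anti-graded property ensures $\epsilon'(x) \in ((L_R(E))_{s^{-1}}(L_R(E))_s)^* = (L_R(E))_{s^{-1}}(L_R(E))_s$. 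This completes the verification of Definition~\ref{definitiones}, proving that $L_R(E)$ is nearly epsilon-strongly $S$-graded.
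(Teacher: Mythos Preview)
Your proposal is correct and follows essentially the same route as the paper: both select the $\preceq$-minimal classes among the monomials appearing in $x$, set $\epsilon(x)$ to be the sum of the corresponding $\mathcal{N}_s$-values, invoke Proposition~\ref{lemmaa} to obtain $\epsilon(x)x=x$, and then handle the right-hand $\epsilon'(x)$ via the anti-graded involution applied to $x^*$. The paper additionally records that each $\epsilon_s(x)$ is $\ast$-invariant (so your $\epsilon'(x)=(\epsilon(x^*))^*$ actually equals $\epsilon(x^*)$), and you supply more detail on the uniqueness of the minimal class below a given $[x_i]_\sim$, which the paper simply defers to \cite{no}.
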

\begin{proof}
We may follow the technique presented in the proof of Theorem~4.2 in \cite{no}.
Let $s\in S$ and $x\in(L_R(E))_s.$ Then 
$x=\sum_{i=1}^nr_i\mu_i\eta^*_i$ for some
natural number $n,$ and $r_i\in R,$ $\mu_i\eta^*_i\in X_s.$ Since $n$ is finite,
there exists a subset $\{i_1,\dots,i_k\}$ of $\{1,\dots,n\}$ such that
$\{[\mu_{i_j}\eta^*_{i_j}]\}_{j=1}^k$ is the set of minimal elements of the set
$\{[\mu_i\eta^*_i]\}_{i=1}^n$ with respect to $\preceq.$ Let
$\epsilon_s(x):=\sum_{j=1}^k\mathcal{N}_s(\mu_{i_j}\eta^*_{i_j}).$ Then
$\epsilon_s(x)\in(L_R(E))_s(L_R(E))_{s^{-1}}.$ Like in the
proof of Theorem~4.2 in \cite{no}, with the help of Proposition~\ref{lemmaa}, we
get that $\epsilon_s(x)x=x.$ Note that $\epsilon_s(x)^*=\epsilon_s(x).$ 
Since $x^*\in(L_R(E))_{s^{-1}},$ by what we have just established, there exists
$\epsilon_{s^{-1}}(x^*)\in (L_R(E))_{s^{-1}}(L_R(E))_s$ such that 
$\epsilon_{s^{-1}}(x^*)^*=\epsilon_{s^{-1}}(x^*)$ and
$\epsilon_{s^{-1}}(x^*)x^*=x^*.$ Now, since $R$ commutes with the set of
generators of $L_R(E),$ we have that
$x\epsilon_{s^{-1}}(x^*)=(\epsilon_{s^{-1}}(x^*)^*x^*)^*
=(\epsilon_{s^{-1}}(x^*)x^*)^*=(x^*)^*=x.$ Hence, $L_R(E)$ is indeed nearly
epsilon-strongly graded as an $S$-graded ring.
\end{proof}
\begin{remark}
We note that in the case of a finite graph $E,$ the Leavitt path algebra $L_R(E)$ is 
epsilon-strongly graded as an $S$-graded ring (cf.~Theorem~4.1 in \cite{no}).
\end{remark}
\subsection{Case of a finite graph}
In this subsection we prove that Theorem~\ref{theoremmain} holds true for finite
directed graphs. To this end, we describe the ring components of the $S$-graded
Leavitt path algebras of finite graphs and prove they are von Neumann regular if the 
coefficient ring is von Neumann regular. This is done by showing that these 
ring components are ultramatricial, that is, direct limits of matricial rings, which 
generalizes Corollary~2.1.16 in \cite{aasm}, as well as Lemma~4.4 in \cite{dl}, to
canonically $S$-graded Leavitt path algebras. 
The converse holds for an arbitrary non-null graph, that is, if the ring components are 
von Neumann regular, then the coefficient ring is von Neumann regular.\\
\indent Given an idempotent $e\in I(S)^\times,$ we want to describe nonzero 
$(L_R(E))_e.$ We achieve that by describing nonzero $(C_R^X(E))_e,$ where $X$ is 
a subset of $\mathrm{Reg}(E).$ So, let $X\subseteq\mathrm{Reg}(E),$ and let 
$e\in I(S)^\times$ be such that $(C_R^X(E))_e\neq0.$ We start with the following 
lemma.
\begin{lemma}\label{lemmae}
If $\sum_ir_i\mu_i\eta_i^*$ is an element of the $e$-component $(C_R^X(E))_e$ 
of the $S$-graded algebra $C_R^X(E),$ that is, an element of the $R$-linear span of 
$X_e=\{\mu\eta^*\ |\ \mu,\,\eta\in E^*,\, \mathrm{r}(\mu)=\mathrm{r}(\eta),\,
w(\mu\eta^*)=e\},$ then $w(\mu_i)=w(\eta_i)$ for every $i.$
\end{lemma}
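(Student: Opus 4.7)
The plan is to reduce the lemma to a term-by-term claim and then exploit the explicit Brandt-semigroup structure $B \cong M(G,I)$ recalled in Section~\ref{gleavitt}. By the definition of $(C_R^X(E))_e$ as the $R$-linear span of $X_e$, every monomial $\mu_i \eta_i^*$ appearing in the given expression already satisfies $w(\mu_i \eta_i^*) = e$ individually, so it suffices to prove the following: for any $\mu, \eta \in E^*$ with $\mathrm{r}(\mu) = \mathrm{r}(\eta)$ and $w(\mu\eta^*) = e \in I(S)^\times$, one has $w(\mu) = w(\eta)$.

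To establish the claim, I would work in the concrete model $B \cong M(G,I)$ whose elements are triples $(i,g,j)$ with $i,j \in I$ and $g \in G$, whose partial product is $(i,g,j)(k,h,l) = (i,gh,l)$ precisely when $j=k$, whose inverse is $(i,g,j)^{-1} = (j, g^{-1}, i)$, and in which the nonzero idempotents are exactly the triples $(i, 1_G, i)$ for $i \in I$. (In the canonical setup of Definition~\ref{definitionc} we have $G = \mathbb{Z}$, but this is not needed for the argument.) Writing $w(\mu) = (a, g, b)$ and $w(\eta) = (c, h, d)$, rule $w2)$ gives $w(\eta^*) = (d, h^{-1}, c)$, so the equation $w(\mu) w(\eta^*) = e$ reads
\[
(a, g, b)(d, h^{-1}, c) \;=\; (m, 1_G, m)
\]
for some $m \in I$.

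For the product on the left to be defined and nonzero we must have $b = d$, in which case it equals $(a, gh^{-1}, c)$. Matching this with $(m, 1_G, m)$ forces $a = c = m$ and $gh^{-1} = 1_G$, i.e., $g = h$. Together with $b = d$, this yields $w(\mu) = (a, g, b) = (c, h, d) = w(\eta)$, finishing the proof. There is no real obstacle beyond unwinding definitions: once one records the $M(G,I)$ description of $B$, idempotents are rigid enough that $s t^{-1}$ being a nonzero idempotent in $B$ immediately forces $s = t$.
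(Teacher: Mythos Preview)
Your argument is correct, and it takes a genuinely different route from the paper's proof.

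The paper argues inductively inside the path algebra: writing $\mu=\alpha_1\cdots\alpha_m$ and $\eta=\beta_1\cdots\beta_n$, it first shows $w(\mathrm{s}(\eta))=e$ by cancellativity, and then successively right-multiplies $\mu\eta^*$ by $\beta_1,\beta_2,\dots$ (using relation~$3)$ of Definition~\ref{definitionlpa} to cancel one ghost edge at a time) in order to peel off the $\eta^*$ part; taking weights at each step eventually yields $w(\beta_1)\cdots w(\beta_n)=w(\alpha_1)\cdots w(\alpha_m)$. Your approach instead observes at once that $w(\mu\eta^*)=w(\mu)\,w(\eta)^{-1}$ and then solves $s\,t^{-1}=e$ in the concrete model $M(G,I)$, where it is immediate that a nonzero idempotent product forces $s=t$. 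Your method is shorter and more structural: it isolates the purely groupoid-theoretic fact that $s t^{-1}\in I(B)^\times$ implies $s=t$, independently of the path-algebra relations. The paper's proof, by contrast, avoids invoking the $M(G,I)$ structure theorem and works directly with the algebra relations and cancellativity, which makes it slightly more self-contained from the path-algebra side but longer.

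One small point worth making explicit in your write-up: the identity $w(\mu\eta^*)=w(\mu)w(\eta^*)$ (and that this product is nonzero in $B$) is being used, and it holds because $w$ extends multiplicatively to words in the free algebra (this is how the $S$-grading in Proposition~\ref{propositiongrading} is defined) and because $\mathrm{r}(\mu)=\mathrm{r}(\eta)$ guarantees the right idempotent of $w(\mu)$ matches the left idempotent of $w(\eta^*)$. Likewise, rule~$w2)$ is stated only for single edges, so the extension $w(\eta^*)=w(\eta)^{-1}$ for paths deserves a one-line justification via $(st)^{-1}=t^{-1}s^{-1}$ in $B$.
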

\begin{proof}
Let $\mu\eta^*\in X_e$ with $\mu=\alpha_1\dots\alpha_m$ and 
$\eta=\beta_1\dots\beta_n,$ where $\alpha_i,$ $\beta_j\in E^1,$ $i=1,\dots,m,$ 
$j=1,\dots,n.$ Let $\mathrm{s}(\beta_1)=v.$ Then 
\[\mu\eta^*=\alpha_1\dots\alpha_m\beta_n^*\dots\beta_1^*
=\alpha_1\dots\alpha_m\beta_n^*\dots\beta_1^*v\neq0.\] Hence,
\[e=w(\mu\eta^*)=w(\mu\eta^*)w(v)=ew(v).\] Since $S$ is cancellative, it follows
that $w(v)=e.$ By the definition of the weight mapping $w,$ we have that 
$ew(\beta_1)=w(\beta_1).$ Now, 
\[\mu\eta^*\beta_1=\alpha_1\dots\alpha_m\beta_n^*\dots\beta_2^*
\mathrm{r}(\beta_1)\neq0.\]
However, as we know, $\mathrm{r}(\beta_1)=\mathrm{s}(\beta_2).$ Hence, 
\[w(\beta_1)=ew(\beta_1)=w(\alpha_1)\dots 
w(\alpha_m)w(\beta_n)^{-1}\dots w(\beta_2)^{-1}w(\mathrm{s}(\beta_2)).\]
Further,
\[\mu\eta^*\beta_1\beta_2=\alpha_1\dots\alpha_m\beta_n^*\dots\beta_3^*
\mathrm{r}(\beta_2)\neq0.\]
Since $\mathrm{r}(\beta_2)=\mathrm{s}(\beta_3),$ we get
\[w(\beta_1)w(\beta_2)=ew(\beta_1)w(\beta_2)=w(\alpha_1)\dots 
w(\alpha_m)w(\beta_n)^{-1}\dots w(\beta_3)^{-1}w(\mathrm{s}(\beta_3)).\]
Continuing this way, we eventually obtain that
\[w(\beta_1)\dots w(\beta_n)=w(\alpha_1)\dots w(\alpha_m)f,\] where
$f=w(\mathrm{r}(\beta_n))=w(\mathrm{r}(\eta))=w(\mathrm{r}(\mu))
=w(\mathrm{r}(\alpha_m)).$ By the definition of $w,$ we get that
$w(\alpha_m)f=w(\alpha_m).$ Hence,
$w(\beta_1)\dots w(\beta_n)=w(\alpha_1)\dots w(\alpha_m),$ that is,
$w(\mu)=w(\eta).$
\end{proof}
Let $S_E$ be the set of elements $s\in S^\times$ such that $w(\mu)=s$ for some 
path $\mu$ in $E,$ that is, 
\[S_E:=\{s\in S^\times\ |\ (\exists\mu\in E^*)\, w(\mu)=s\},\] and
let \[S_e:=\{s\in S_E\ |\ ss^{-1}=e\}.\] Also, for $s,$ $t\in S_e$ we put $s\leq t$
if and only if every path of weight $t$ contains an initial subpath of weight $s.$
\begin{lemma}\label{lemma2}
$S_e$ is a directed set with respect to $\leq.$
\end{lemma}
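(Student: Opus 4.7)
The plan is to prove the stronger claim that $(S_e, \leq)$ is in fact totally ordered; directedness is then automatic, since the larger of any two elements serves as a common upper bound. Reflexivity of $\leq$ (a path is an initial subpath of itself) and transitivity (initial subpaths of initial subpaths) are routine, so the real work is comparing any two members of $S_e$.

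The backbone of the argument is the observation that, under the canonical $S$-grading with rules $w1)$--$w4)$ and the cancellativity of $S$, \emph{the weight of a path is determined by its length together with the weight of its source vertex}. More precisely, for each nonnegative integer $n$, if at least one path of length $n$ begins at a vertex of weight $e$, then all such paths share a single weight $s^{(n)}_e$. I would establish this by induction on $n$: the base case $n=0$ gives weight $e$; for the step, rule $w3)$ says that every edge out of a weight-$e$ vertex carries the same weight $a_1$, so all such edges end at vertices of common weight $f_1 = a_1^{-1}a_1$, and rule $w3)$ applied to $f_1$ pins down the weight of every successor edge, and so on. Combined with the fact (proved as in Lemma~\ref{lemmae}, by applying $w$ to $\mathrm{s}(\mu)\mu = \mu$ and cancelling) that any path of weight $s \in S_e$ must start at a weight-$e$ vertex, this yields a well-defined length function $\ell: S_e \to \mathbb{N}_0$ with $\ell(s^{(n)}_e) = n$; in the canonical grading $\ell(s)$ is simply the second coordinate of $s$.

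With $\ell$ in hand, I would verify that $s \leq t$ in $S_e$ if and only if $\ell(s) \leq \ell(t)$. The forward direction is immediate since initial subpaths are shorter. For the converse, if $\ell(s) \leq \ell(t)$ and $\nu$ is any path of weight $t$, then $\nu$ begins at a weight-$e$ vertex, so its initial subpath $\nu_0$ of length $\ell(s)$ also begins there and, by the uniqueness just established, has weight $s^{(\ell(s))}_e = s$; hence $s \leq t$. Thus $(S_e, \leq)$ is order-isomorphic to a subset of $(\mathbb{N}_0, \leq)$, so it is totally ordered, and in particular directed. The only step that requires care is the induction establishing uniqueness of $s^{(n)}_e$, which is the point at which the rules $w3)$ and $w4)$ do their essential work; beyond that no genuine obstacle arises.
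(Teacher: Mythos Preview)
Your proof is correct and follows essentially the same route as the paper's: both arguments show that $(S_e,\leq)$ is in fact totally ordered, by using that every path of weight in $S_e$ starts at a weight-$e$ vertex and that, by the rules $w1)$--$w4)$, the weight of such a path is determined by its length. The paper compresses this into the single phrase ``by the very definition of the weight mapping $w$,'' whereas you make the mechanism explicit via the length function $\ell$ and the inductive uniqueness of $s^{(n)}_e$; your version is more detailed but not substantively different.
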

\begin{proof}
Obviously, $\leq$ is both reflexive and transitive. Now, let $s$ and $t$ be distinct 
elements of $S_e.$ Then $es=s$ and $et=t.$ Hence, every path of weight $s$ has a 
source of weight $e,$ and every path of weight $t$ has a source of weight $e.$ By 
the very definition of the weight mapping $w,$ it follows that either every path of 
weight $t$ contains an initial subpath of weight $s$ or every path of weight $s$ 
contains an initial subpath of weight $t.$ Equivalently, either $s\leq t$ or $t\leq s.$
Thus, $S_e$ is a directed set with respect to $\leq.$
\end{proof}
Let $e\neq s\in S_e,$ and let $P$ be a finite set of paths of $E$ of weight 
$\leq s.$ For $t\leq s,$ let $P_t$ be the set of initial paths of weight $t$ of 
elements of $P,$ and let $Q_t$ be the set of edges $\alpha$ for which there
exist paths $\mu'$ and $\mu''$ such that $w(\mu'\alpha)=t$ and 
$\mu=\mu'\alpha\mu''\in P.$ If $\mu\in P$ and $w(\mu)\geq t,$ then by 
$\mu_t$ we denote the initial subpath of $\mu$ of weight $t.$ Of course, $P_e$
is the set of the sources of paths from $P.$
\begin{definition}[cf.~Definition~2.1.11 in \cite{aasm}]
We say that $P$ is an \emph{$X$-complete subset} of $E^*$ if the following
conditions are satisfied:
\begin{enumerate}
\item All the paths in $P$ of weight less than $s$ end in a sink;
\item For every $\mu\in P,$ every $t<w(\mu)$ such that 
$\mathrm{r}(\mu_t)\in X,$ and every 
$\alpha\in\mathrm{s}^{-1}(\mathrm{r}(\mu_t)),$ we have that
$\mu_t\alpha=\eta_{tw(\alpha)},$ for some $\eta\in P;$
\item For every $\mu\in P_t\setminus E^0$ $(t<s)$ and every 
$\alpha\in Q_{tw(\alpha)}$ such that $\mathrm{r}(\mu)=\mathrm{s}(\alpha),$
we have that $\mu\alpha\in P_{tw(\alpha)}.$
\end{enumerate}
\end{definition}
\indent If $F$ is a finite $X$-complete subgraph of $E$ and $e\neq s\in S_e,$ then,
by using the way $w$ is defined, it can be shown (cf.~Proposition~2.1.12 in 
\cite{aasm}) that there exists an $X$-complete subset of $E^*$ of paths of weight 
$\leq s$ which contains all the paths of weight $s$ of $F$ as well as all the paths of 
weight $<s$ that end in a sink of $E.$ Then, following the technique presented in the 
proof of Proposition~2.1.14 in \cite{aasm}, one obtains the following result. The 
proof is similar and we omit it.
\begin{proposition}[cf.~Proposition~2.1.14 in \cite{aasm}]\label{propositiond}
Let $E$ be an arbitrary graph, $R$ a unital ring, and let 
$X\subseteq\mathrm{Reg}(E).$ Also, let $P$ be an $X$-complete finite subset of
$E^*$ consisting of paths of weight $\leq s\in S_e.$ Define $G(P)$ to be the 
$R$-linear span of monomials $\mu\eta^*,$ $\mu,$ $\eta\in P,$ such that 
$w(\mu)=w(\eta)\in S_e.$ For $e\neq t\leq s,$ define $F_t(P)$ to be the 
$R$-linear span in $C_R^X(E)$ of the elements 
$\mu\left(v-\sum_{\alpha\in Q_t,\mathrm{s}(\alpha)=v}\alpha\alpha^*\right)
\eta^*,$ where $\mu,$ $\eta\in P_{t'},$ $t'w(\alpha)=t,$ 
$\mathrm{r}(\mu)=\mathrm{r}(\eta)=v\notin X,$ and 
$Q_t\cap\mathrm{s}^{-1}(v)\neq\emptyset.$ Also, define 
\[F(P)=G(P)+\sum_{e\neq t\leq s}F_t(P).\] Then $F(P)$ is a matricial
$R$-algebra. Moreover, $(C_R^X(E))_e$ is the direct limit of the subalgebras
$F(P),$ where $P$ ranges over all the $X$-complete finite subsets of $E^*$ whose
weights belong to $S_e.$
\end{proposition}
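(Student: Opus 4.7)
The plan is to adapt the proof of Proposition~2.1.14 in \cite{aasm}, working throughout with the weight mapping $w$ instead of path length. The argument has two independent parts: first, show $F(P)$ is a matricial $R$-algebra for every admissible $P$; second, show that the family $\{F(P)\}_P$ forms a directed system whose union exhausts $(C_R^X(E))_e.$

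For the matricial structure, I would partition the set of generators of $F(P)$ into orthogonal blocks indexed by pairs $(v,t')$ and exhibit matrix units within each block. Concretely, inside $G(P),$ consider the monomials $\mu\eta^*$ with $\mathrm{r}(\mu)=\mathrm{r}(\eta)=v$ and $w(\mu)=w(\eta)=t'\in S_e.$ The Cohn relation $3)$ together with $\mathrm{s}(\mu)=e$ (from Lemma~\ref{lemmae}-type computation) yields $(\mu\eta^*)(\zeta\theta^*)=\delta_{\eta,\zeta}\mu\theta^*,$ so each such block is isomorphic to a matrix ring over $R$ whose size equals the cardinality of $\{\mu\in P:\mathrm{r}(\mu)=v,\ w(\mu)=t'\}.$ Orthogonality between distinct blocks follows from the Cohn relation $3)$ combined with $\mathrm{r}(\mu)\mathrm{r}(\zeta)=\delta_{v,v'}v.$ For each $F_t(P),$ the defect idempotent $q_v=v-\sum_{\alpha\in Q_t,\,\mathrm{s}(\alpha)=v}\alpha\alpha^*$ is a genuine (nonzero) idempotent precisely because $v\notin X$ (so the CK2 relation is not imposed), and it annihilates $\alpha\alpha^*$ for every $\alpha\in Q_t.$ The elements $\mu q_v\eta^*$ then behave like matrix units in a separate block, orthogonal to $G(P)$ and to each $F_{t''}(P)$ for $t''\neq t,$ again by a direct check using the relations. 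Assembling the blocks gives the asserted matricial decomposition of $F(P).$

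For the direct limit statement, I would first verify that $F(P)\subseteq(C_R^X(E))_e,$ which is immediate for $G(P),$ and for $F_t(P)$ follows by checking that $w(\alpha\alpha^*)=w(\alpha)w(\alpha)^{-1}$ is an idempotent that acts trivially under the weight computation, so each generator carries weight $e.$ The directed system structure is monotonicity: if $P\subseteq P'$ are both $X$-complete finite subsets with weights in $S_e,$ then $F(P)\subseteq F(P'),$ where the nontrivial point is that an element $\mu q_v^P\eta^*\in F_t(P)$ can be expressed inside $F(P')$ by distributing $q_v^P=q_v^{P'}+\sum_{\alpha\in Q_t^{P'}\setminus Q_t^P,\,\mathrm{s}(\alpha)=v}\alpha\alpha^*,$ the latter terms being absorbed into $G(P')$ or lower $F_{t''}(P').$ Lemma~\ref{lemma2} ensures that the indexing set $S_e$ is directed, and for the exhaustion, given any finite sum $x=\sum_i r_i\mu_i\eta_i^*\in(C_R^X(E))_e,$ Lemma~\ref{lemmae} yields $w(\mu_i)=w(\eta_i)\in S_e;$ then by repeatedly applying the CK2 relation $4)$ at every occurrence of a vertex in $X,$ one rewrites each $\mu_i\eta_i^*$ so that the path lengths line up at some common weight $s$ dominating all the $w(\mu_i),$ after which a straightforward closure argument produces a finite $X$-complete $P$ with $x\in F(P).$

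The main obstacle is precisely this last exhaustion step: one must carefully iterate the CK2 rewriting at regular vertices without introducing infinitely many new paths, and simultaneously keep track of the defect elements $q_v$ at non-regular vertices so that they land in the $F_t(P)$ part rather than being discarded. In the canonically $S$-graded setting there is an additional subtlety not present in the $\mathbb{Z}$-graded case treated in \cite{aasm}: the comparison $s\leq t$ in $S_e$ relies on both idempotent endpoints matching up properly, so one must verify that the CK2 rewriting at a vertex $v$ with $w(v)=f$ only introduces edges whose weight composes with $f,$ which is guaranteed by rules $w1)$--$w4).$ Once the bookkeeping is set up, the transition homomorphisms in the directed system are injective (being inclusions of subrings of $C_R^X(E)$), so the direct limit identifies with $\bigcup_P F(P)=(C_R^X(E))_e,$ completing the proof.
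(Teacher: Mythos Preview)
Your proposal is correct and follows precisely the route the paper itself indicates: the paper omits the proof entirely, stating only that it is obtained ``following the technique presented in the proof of Proposition~2.1.14 in \cite{aasm},'' and your outline is a faithful adaptation of that argument to the canonical $S$-grading. The block-matrix analysis of $G(P)$ and the $F_t(P)$ via the defect idempotents $q_v$, together with the exhaustion of $(C_R^X(E))_e$ by CK2-rewriting, is exactly what that adaptation requires; your remark that rules $w1)$--$w4)$ ensure compatibility of the rewriting with the weight is the one genuinely new bookkeeping point beyond \cite{aasm}, and you have identified it correctly.
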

\indent Recall that if $A$ is a ring, the full matrix ring over $A$ of $n\times n$ 
matrices is denoted by $\mathbb{M}_n(A).$
\begin{lemma}\label{lemmafdg}
Let $E$ be a finite directed graph, and $R$ a unital von Neumann regular ring. Then, 
for every idempotent element $e\in S,$ the component $(L_R(E))_e$ of
a canonically $S$-graded ring $L_R(E)$ is von Neumann regular.
\end{lemma}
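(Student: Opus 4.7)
The plan is to reduce the lemma to Proposition~\ref{propositiond} and then combine two standard facts: matricial algebras over a von Neumann regular ring are von Neumann regular, and direct limits preserve von Neumann regularity. The finiteness of $E$ plays no essential role beyond what is already built into the construction of $F(P)$; even for finite $E$, when cycles are present $S_e$ can be infinite, so the direct limit structure is unavoidable.

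First I would observe that if $(L_R(E))_e=0$, there is nothing to prove, so assume $(L_R(E))_e\neq 0$. Setting $X=\mathrm{Reg}(E)$ so that $C_R^X(E)=L_R(E)$, I would invoke Proposition~\ref{propositiond} to write
\[(L_R(E))_e \;=\; \underrightarrow{\lim}_{\,P}\,F(P),\]
where $P$ runs over the $X$-complete finite subsets of $E^*$ whose weights lie in $S_e$, and each $F(P)$ is a matricial $R$-algebra, i.e.\ a finite direct product of full matrix rings $\mathbb{M}_{n}(R)$.

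Next I would invoke the classical fact that $\mathbb{M}_n(R)$ is von Neumann regular whenever $R$ is, which, together with the trivial stability of von Neumann regularity under finite direct products, yields that each $F(P)$ is von Neumann regular. To conclude, I would apply the standard verification that a direct limit of von Neumann regular rings is again von Neumann regular: any $x\in(L_R(E))_e$ admits a preimage $\tilde{x}\in F(P)$ for some $P$, and a regular witness $\tilde{y}\in F(P)$ with $\tilde{x}=\tilde{x}\tilde{y}\tilde{x}$ descends to a $y\in(L_R(E))_e$ with $x=xyx$.

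There is no real obstacle: all the structural work has been carried out in Proposition~\ref{propositiond}, and the remainder of the argument is a routine assembly of well-known facts. If anything warrants a moment of care, it is merely to record that the $F(P)$ sit inside $(L_R(E))_e$ compatibly with the transition maps of the direct system, so that regularity relations transfer correctly to the limit; but this is immediate from the construction of $F(P)$ as an $R$-linear span of explicit monomial expressions in $L_R(E)$.
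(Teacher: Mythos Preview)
Your proposal is correct and follows essentially the same approach as the paper: express $(L_R(E))_e$ via Proposition~\ref{propositiond} as a direct limit of matricial $R$-algebras, observe that each is von Neumann regular because $R$ is, and conclude by preservation under direct limits. The only cosmetic difference is that the paper indexes the direct system by $s\in S_e$ via explicit subalgebras $D_s$ (with an explicit matrix decomposition), whereas you index directly by the $X$-complete finite sets $P$; both are instances of the same construction.
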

\begin{proof}
Let $e\in S\setminus\{0\}$ be an idempotent element. If 
$(L_R(E))_e=0,$ there is nothing to prove. So, let 
$(L_R(E))_e\neq0.$ Then, by Lemma~\ref{lemmae},
\[(L_R(E))_e=\{\sum_ir_i\mu_i\eta_i^*\ |\ r_i\in R, \mu_i, \eta_i\in E^*, 
\mathrm{r}(\mu_i)=\mathrm{r}(\eta_i), w(\mu_i)=w(\eta_i)\in S_e\}.\] 
According to Lemma~\ref{lemma2}, we have that $S_e$ is a directed set with 
respect to $\leq.$ Now, for every $s\in S_e$ define $D_s$ to be the set
\begin{align}\nonumber
  & \{\sum_ir_i\mu_i\eta_i^*\ |\ r_i\in R, \mu_i, \eta_i\in E^*, 
\mathrm{r}(\mu_i)=\mathrm{r}(\eta_i), w(\mu_i)=w(\eta_i)=s\}\,\cup\\
\nonumber  & \{\sum_ir_i\mu_i\eta_i^*\ |\ r_i\in R, \mu_i, \eta_i\in E^*,
\mathrm{r}(\mu_i)=\mathrm{r}(\eta_i)\in\mathrm{Sink}(E), 
w(\mu_i)=w(\eta_i)<s\}.
\end{align}
It is easy to verify that $D_s$ is an $R$-subalgebra of $(L_R(E))_e.$ On the other 
hand, similarly to Corollary~2.1.16 in \cite{aasm}, one concludes that 
\[D_s\cong\left(\bigoplus_{t<s}\bigoplus_{v\in\mathrm{Sink}(E)}
\mathbb{M}_{|P(t,v)|}(R)\right)\oplus\left(\bigoplus_{v\in E^0}
\mathbb{M}_{|P(s,v)|}(R)\right),\] where $P(s,v)$ denotes the set of all paths 
$\mu\in E^*$ for which $w(\mu)=s$ and $\mathrm{r}(\mu)=v.$
As a corollary to Proposition~\ref{propositiond}, we have that $(L_R(E))_e$
is the direct limit of the subalgebras $D_s,$ where $s$ ranges over $S_e.$ In 
particular, \[(L_R(E))_e=\bigcup_{s\in S_e}D_s.\]  
Therefore, by Proposition~5.2.14 in \cite{bk}, we get that 
$(L_R(E))_e$ is von Neumann regular.
\end{proof}
Let $E$ now be an arbitrary directed graph, distinct from the null graph, and let $R$
be a unital ring. Also, let us assume that $(L_R(E))_e$ is a von Neumann 
regular ring for every idempotent element $e\in S^\times.$ Since $E$ is distinct 
from the null graph, there exists $e\in I(S)^\times$ such that 
$(L_R(E))_e\neq0,$ that is, there exists a vertex $v\in E^0$ such that
$w(v)=e.$ Then $R$ can be embedded in $(L_R(E))_e$ via the mapping 
$x\mapsto xv$ $(x\in R).$ Let $0\neq x\in R.$ Since $(L_R(E))_e$ is von 
Neumann regular, there exists $a\in(L_R(E))_e$ such that 
$xv=(xv)a(xv).$ We have that $a$ is a finite sum $\sum_ir_i\mu_i\eta_i^*$ for 
some $r_i\in R$ and $\mu_i,$ $\eta_i\in E^*$ such that 
$w(\mu_i)=w(\eta_i)=s_i,$ $\mathrm{r}(\mu_i)=\mathrm{r}(\eta_i),$ and
$s_is_i^{-1}=e,$ according to Lemma~\ref{lemmae}. Hence,
$xv=(xv)\left(\sum_ir_i\mu_i\eta_i^*\right)(xv),$ and therefore, 
$xv=\sum_jxr_jx\mu_j\eta_j^*,$ where the sum goes over those $j$ for which
$\mathrm{s}(\mu_j)=\mathrm{s}(\eta_j)=v.$ We may now proceed exactly as in 
the proof of Lemma~4.5 in \cite{dl} in order to conclude that $x=xr_kx$ for some 
$k.$ Therefore, like in the case of the canonical $\mathbb{Z}$-grading (Lemma~4.5 
in \cite{dl}), the converse of the previous lemma holds true as well in the case of the 
canonical $S$-grading, regardless of the cardinality of $E.$
\begin{lemma}\label{lemmavnr}
Let $R$ be a unital ring, $E$ a directed graph which is not null, and let us observe
$L_R(E)$ as a canonically $S$-graded ring. If $(L_R(E))_e$ is von Neumann regular 
for every idempotent element $e\in S^\times,$ then $R$ is von Neumann regular 
too.
\end{lemma}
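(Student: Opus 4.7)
The plan is to embed $R$ as a ``corner'' inside the homogeneous component $(L_R(E))_e$ for a suitable idempotent $e$, and then pull back von Neumann regularity from the component to $R$. Since $E$ is non-null, I fix any vertex $v \in E^0$ and set $e := w(v) \in I(S)^\times$, so that $(L_R(E))_e \neq 0$. The assignment $\iota : R \to (L_R(E))_e$ given by $\iota(x) = xv$ is an injective ring homomorphism, since $v$ is part of the standard $R$-basis of $L_R(E)$ (the normal-form monomials $\mu\eta^*$ with $\mathrm{r}(\mu) = \mathrm{r}(\eta)$).

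Next, fix a nonzero $x \in R$. By hypothesis on $(L_R(E))_e$, there exists $a \in (L_R(E))_e$ with $xv = (xv)\, a\, (xv)$. Lemma~\ref{lemmae} lets me write
\[
a = \sum_i r_i \mu_i \eta_i^*, \qquad r_i \in R,\ \mu_i, \eta_i \in E^*,\ w(\mu_i) = w(\eta_i) = s_i,\ s_i s_i^{-1} = e.
\]
Using the Cohn relations from Definition~\ref{definitionlpa}, namely that $v\alpha = \alpha$ holds only when $\mathrm{s}(\alpha) = v$ and symmetrically for $\alpha^* v$, the product $(xv)\, a\, (xv)$ collapses to
\[
xv = \sum_j x r_j x \, \mu_j \eta_j^*,
\]
where $j$ ranges over those indices with $\mathrm{s}(\mu_j) = \mathrm{s}(\eta_j) = v$.

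The concluding step is to isolate a single index $k$ yielding $x = x r_k x$ with $r_k \in R$. This is exactly the extraction carried out in the proof of Lemma~4.5 of \cite{dl}: expanding both sides of the displayed identity in the canonical $R$-basis of $L_R(E)$ consisting of normal-form monomials $\mu\eta^*$, the basis element $v = v v^*$ on the left can be matched only by a summand on the right with $\mu_k = \eta_k = v$ (trivial paths at $v$), and the coefficient comparison reads $x = x r_k x$. Hence $R$ is von Neumann regular.

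The only non-routine point is this final normal-form extraction, which rests on $R$-linear independence of the canonical monomial basis of $L_R(E)$ for an arbitrary unital coefficient ring. The canonical $S$-grading plays no role here beyond enabling the initial decomposition of $a$ via Lemma~\ref{lemmae}, so the argument of \cite{dl} transfers verbatim without modification.
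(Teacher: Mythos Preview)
Your proof is correct and follows essentially the same approach as the paper: fix a vertex $v$ with $w(v)=e$, embed $R$ into $(L_R(E))_e$ via $x\mapsto xv$, use von Neumann regularity of the component to obtain $xv=(xv)a(xv)$, decompose $a$ via Lemma~\ref{lemmae}, reduce to $xv=\sum_j xr_jx\,\mu_j\eta_j^*$ with $\mathrm{s}(\mu_j)=\mathrm{s}(\eta_j)=v$, and then invoke the extraction argument of Lemma~4.5 in \cite{dl}. The paper's own proof is given in the paragraph immediately preceding the lemma statement and is step-for-step the same, including the explicit deferral of the final coefficient comparison to \cite{dl}.
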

We are now ready to prove Theorem~\ref{theoremmain} in the case of a finite graph.
\begin{theorem}\label{fdg}
Let $R$ be a unital ring and let $E$ be a finite directed graph which is not null. Then
$L_R(E)$ is graded von Neumann regular as a canonically $S$-graded ring if and only
if $R$ is von Neumann regular.
\end{theorem}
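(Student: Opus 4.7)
The plan is to assemble this theorem directly from the machinery already established in the paper, since for finite graphs both directions reduce to results proved earlier. The proof is essentially a synthesis rather than requiring new ideas, and the main task is to verify that the hypotheses of each lemma are met in our setting.

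For the forward direction ($\Rightarrow$), I would assume that $L_R(E)$ is graded von Neumann regular as a canonically $S$-graded ring. Since $S$ is a Brandt semigroup, it is cancellative, so Lemma~\ref{lemma1} applies and yields that $(L_R(E))_e$ is von Neumann regular for every $e \in I(S)$. Then, because $E$ is not null, Lemma~\ref{lemmavnr} (which is stated for arbitrary non-null graphs, not just finite ones) applies and gives that $R$ itself is von Neumann regular. Note that the finiteness of $E$ is not needed for this direction.

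For the reverse direction ($\Leftarrow$), I would assume that $R$ is von Neumann regular. The finiteness of $E$ now becomes essential: Lemma~\ref{lemmafdg} then yields that every component $(L_R(E))_e$ of the canonically $S$-graded Leavitt path algebra is von Neumann regular. Independently, Theorem~\ref{theoremnesg} guarantees that $L_R(E)$ is nearly epsilon-strongly graded as an $S$-graded ring (and this holds for any directed graph, not just finite ones). With both hypotheses of Theorem~\ref{theoremm} now in place---namely, $S$ is cancellative, $L_R(E)$ is nearly epsilon-strongly graded, and each ring component is von Neumann regular---we invoke Theorem~\ref{theoremm} to conclude that $L_R(E)$ is graded von Neumann regular.

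There is no significant obstacle here: the hard work has already been done in Lemmas~\ref{lemmafdg} and \ref{lemmavnr} (handling the structure of the idempotent-degree components via ultramatricial descriptions), in Theorem~\ref{theoremnesg} (establishing nearly epsilon-strong grading for Leavitt path algebras), and in the general characterization Theorem~\ref{theoremm}. The only subtlety worth flagging is making sure to appeal to Lemma~\ref{lemmafdg} (which requires $E$ finite) in the reverse direction, while the forward direction uses only the non-nullity of $E$; this asymmetry mirrors the corresponding asymmetry in Theorem~\ref{theoremdl} of \cite{dl} and sets the stage for the subsequent extension to arbitrary graphs via the direct limit argument (Lemma~\ref{lemmadlvnr}).
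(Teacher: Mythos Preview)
Your proposal is correct and follows essentially the same route as the paper's proof: both directions are assembled from Lemma~\ref{lemmafdg}, Lemma~\ref{lemmavnr}, Theorem~\ref{theoremnesg}, and the characterization Theorem~\ref{theoremm}. The only cosmetic difference is that in the forward direction you cite Lemma~\ref{lemma1} directly, whereas the paper invokes the ``only if'' part of Theorem~\ref{theoremm} (which itself rests on Lemma~\ref{lemma1}); the logic is identical.
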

\begin{proof}
$(\Rightarrow)$ According to the `only if' part of Theorem~\ref{theoremm}, we 
have that $\left(L_R(E)\right)_e$ is a von Neumann regular ring for every 
idempotent element $e\in S.$ The ring $R$ is then von Neumann regular by
Lemma~\ref{lemmavnr}.\\
\indent $(\Leftarrow)$ Let $R$ be von Neumann regular. Then 
Lemma~\ref{lemmafdg} implies that $(L_R(E))_e$ is von Neumann 
regular for all $e\in I(S).$ Moreover, according to Theorem~\ref{theoremnesg}, we 
have that $L_R(E)$ is nearly epsilon-strongly graded as an $S$-graded 
ring. Therefore, by the `if part' of Theorem~\ref{theoremm} we get that
$L_R(E)$ is graded von Neumann regular as an $S$-graded ring. 
\end{proof}
\subsection{Case of an arbitrary graph}
Let $E$ be a directed graph and $X\subseteq\mathrm{Reg}(E).$ Then, according to
Definition~1.5.16 in \cite{aasm}, we define a new graph $E(X)$ in the following 
way. Let $Y:=\mathrm{Reg}(E)\setminus X,$ and let us add new vertices 
$Y'=\{v'\ |\ v\in Y\}.$ The set of vertices $(E(X))^0$ is defined to be the disjoint 
union of $E^0$ and $Y',$ while the set of edges $(E(X))^1$ is defined to be the 
disjoint union of $E^1$ and 
$\{\alpha'\ |\ \alpha\in E^1, \mathrm{r}(\alpha)\in Y\},$ where 
$\alpha'$ is a new edge starting from $\mathrm{s}(\alpha)$ and ending in the new 
vertex $\mathrm{r}(\alpha)'\in Y'.$\\
\indent If $R$ is a unital ring, then, according to Proposition~4.8 in \cite{dl} 
(cf.~Theorem~1.5.8 in \cite{aasm}), $C_R^X(E)$ and $L_R(E(X))$ are 
isomorphic as $\mathbb{Z}$-graded rings. This isomorphism 
$\phi: C_R^X(E)\to L_R(E(X))$ is given by 
\[\phi(v)=\begin{cases} v+v' & \quad\mathrm{if}\quad v\in Y,\\
v & \quad\mathrm{if}\quad v\notin Y,
\end{cases}\quad\mathrm{and}\quad
\phi(\alpha)=\begin{cases} \alpha+\alpha' & \quad\mathrm{if}\quad 
r(\alpha)\in Y,\\
\alpha & \quad\mathrm{if}\quad r(\alpha)\notin Y,
\end{cases}\]
where $v\in E^0,$ $\alpha\in E^1.$
Now, let us observe $C_R^X(E)$ as a canonically $S$-graded ring. If $w$ is the 
weight mapping on $E^*\cup\{\mu^*\ |\ \mu\in E^*\}$ of this $S$-grading of 
$C_R^X(E),$ then we put $w(v')=w(v)$ for every $v'\in Y'$ and 
$w(\alpha')=w(\alpha)$ for every 
$\alpha'\in\{\alpha'\ |\ \alpha\in E^1, \mathrm{r}(\alpha)\in Y\}.$ This extends 
$w$ to $E(X)^*\cup\{\mu^*\ |\ \mu\in E(X)^*\}.$ Then $L_R(E(X))$ is a 
canonically $S$-graded ring with respect to $w,$ and the mapping $\phi$ is a 
homogeneous isomorphism with respect to this grading too. Hence, the following 
proposition holds.
\begin{proposition}\label{propositiongr}
Let $R$ be a unital ring and let $E$ be a directed graph. Then $C_R^X(E)$ and
$L_R(E(X))$ are graded isomorphic as $S$-graded rings.
\end{proposition}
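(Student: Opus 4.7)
My plan is to leverage the $\mathbb{Z}$-graded ring isomorphism $\phi : C_R^X(E) \to L_R(E(X))$ already supplied by Proposition~4.8 of \cite{dl} and simply verify that $\phi$ is in fact homogeneous with respect to the extended $S$-grading described in the paragraph preceding the statement. Since $\phi$ is already known to be a ring isomorphism, the entire content of the proposition reduces to checking that $\phi\bigl((C_R^X(E))_s\bigr)=(L_R(E(X)))_s$ for every $s\in S$.

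First I would verify homogeneity on the generating set $E^0\cup E^1\cup (E^1)^*$ of $C_R^X(E)$. For $v\in E^0\setminus Y$, $\phi(v)=v$ is homogeneous of degree $w(v)$. For $v\in Y$, $\phi(v)=v+v'$; but the extension of the weight map to $Y'$ was defined by $w(v')=w(v)$, so both summands lie in $(L_R(E(X)))_{w(v)}$ and $\phi(v)$ is itself homogeneous of degree $w(v)$. The identical argument, now using $w(\alpha')=w(\alpha)$, shows that $\phi(\alpha)$ is homogeneous of degree $w(\alpha)$ for every $\alpha\in E^1$; applying the anti-graded involution of Proposition~\ref{propositiongrading} then handles $\phi(\alpha^*)$ in degree $w(\alpha)^{-1}$.

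Next I would propagate this to arbitrary homogeneous elements. Since $\phi$ is a ring homomorphism and the products of homogeneous generators multiply via $w1)$--$w2)$ in accordance with the operation in $S$, each monomial $\mu\eta^*\in (C_R^X(E))_s$ is sent to a homogeneous element of degree $s$ in $L_R(E(X))$. This gives the inclusion $\phi\bigl((C_R^X(E))_s\bigr)\subseteq (L_R(E(X)))_s$ for every $s\in S$, and the reverse inclusion follows automatically from the surjectivity of $\phi$ together with the uniqueness of homogeneous decompositions in $L_R(E(X))$: if $x'\in (L_R(E(X)))_s$ and $x'=\phi(x)$ with $x=\sum_t x_t$ the homogeneous decomposition of $x$, then matching components forces $x'=\phi(x_s)$.

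The only subtle point in this plan is confirming that the extension of $w$ to $E(X)^{*}\cup\{\mu^*\mid\mu\in E(X)^*\}$ still satisfies rules $w1)$--$w2)$, so that $L_R(E(X))$ genuinely is a canonically $S$-graded ring in the sense of Definition~\ref{definitionc}. This is essentially bookkeeping: any added edge $\alpha'$ shares its source with $\alpha$ and has range $\mathrm{r}(\alpha)'\in Y'$ whose declared weight equals $w(\mathrm{r}(\alpha))$, so $w(\mathrm{s}(\alpha'))w(\alpha')=w(\alpha')=w(\alpha')w(\mathrm{r}(\alpha'))$ holds by the corresponding identity for $\alpha$, while the rule for $(\alpha')^*$ is imposed by definition. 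Hence there is no real obstacle; the proposition is a direct translation of the known $\mathbb{Z}$-graded isomorphism into the Brandt-groupoid graded setting once the weight extension is put in place.
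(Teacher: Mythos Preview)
Your proposal is correct and follows exactly the approach sketched in the paragraph immediately preceding the proposition in the paper: extend the weight map by $w(v')=w(v)$ and $w(\alpha')=w(\alpha)$, observe that $L_R(E(X))$ is thereby canonically $S$-graded, and check that the known isomorphism $\phi$ from \cite{dl} respects this grading on generators. The only minor omission is that Definition~\ref{definitionc} also requires rules $w3)$--$w4)$, but these follow immediately since the new vertices $v'$ are sinks and the new edges $\alpha'$ inherit their weights from $\alpha$; the paper itself does not spell this out either.
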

According to Lemma~4.9 in \cite{dl}, if $\psi=(\psi^0,\psi^1): (F,Y)\to (E,X)$ is a 
morphism in $\mathcal{G},$ and $R$ a unital ring, then there exists an induced 
$\mathbb{Z}$-graded ring homomorphism $\bar{\psi}: C_R^Y(F)\to C_R^X(E).$
This homomorphism is given by $\bar{\psi}(v)=\psi^0(v),$ 
$\bar{\psi}(\alpha)=\psi^1(\alpha)$ and $\bar{\psi}(\alpha^*)=\psi^1(\alpha)^*$ 
for all $v\in F^0$ and $\alpha\in F^1.$ Now, let $C_R^Y(F)$ be canonically 
$S$-graded, with the weight mapping $w_F.$ Also, let 
$C_R^X(E)$ be canonically $S$-graded with respect to the weight mapping 
$w_E: E^*\cup\{\mu^*\ |\ \mu\in E^*\}\to S$ such that 
$w_E(\psi^0(v))=w_F(v)$ and $w_E(\psi^1(\alpha))=w_F(\alpha)$ for all 
$v\in F^0$ and $\alpha\in F^1.$ Then, clearly, $\bar{\psi}$ is also a morphism in 
the category $S\operatorname{-RING}.$ Therefore, the following lemma holds.
\begin{lemma}
If $\psi: (F,Y)\to (E,X)$ is a morphism in $\mathcal{G},$ and if $R$ is a unital ring, 
then there exists an induced graded ring homomorphism 
$\bar{\psi}: C_R^Y(F)\to C_R^X(E)$ of $S$-graded rings.
\end{lemma}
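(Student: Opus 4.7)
The plan is to lift the $\mathbb{Z}$-graded ring homomorphism produced in Lemma~4.9 of \cite{dl} to a morphism in $S\operatorname{-RING}$, using the compatibility of the weight mappings $w_F$ and $w_E$ stipulated just before the statement. As a ring homomorphism, $\bar{\psi}$ is already constructed: one defines it on generators by $\bar{\psi}(v)=\psi^0(v)$, $\bar{\psi}(\alpha)=\psi^1(\alpha)$, $\bar{\psi}(\alpha^*)=\psi^1(\alpha)^*$ and checks that the defining relations~$1)$--$4)$ of Definition~\ref{definitionlpa} are respected. The first three follow because $\psi^0$ and $\psi^1$ are injective and $\psi$ is a graph homomorphism, while the Cuntz--Krieger relation~$4)$ for $v\in Y$ is preserved precisely because $\psi^0(v)\in X$ by condition~$b)$ of a morphism in $\mathcal{G}$ and $\psi^1$ restricts to a bijection $\mathrm{s}_F^{-1}(v)\to\mathrm{s}_E^{-1}(\psi^0(v))$ by condition~$c)$, matching $\sum_{\alpha\in F^1,\mathrm{s}(\alpha)=v}\alpha\alpha^*$ with $\sum_{\beta\in E^1,\mathrm{s}(\beta)=\psi^0(v)}\beta\beta^*=\psi^0(v)$.

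Second, I would verify that $\bar{\psi}$ is homogeneous with respect to the $S$-grading. By the chosen extension, $w_E(\psi^0(v))=w_F(v)$ and $w_E(\psi^1(\alpha))=w_F(\alpha)$, so the images of the generators are homogeneous of the same degrees; rules $w1)$ and $w2)$ then force $w_E(\psi^1(\alpha)^*)=w_F(\alpha^*)$. For a monomial $\mu\eta^*\in C_R^Y(F)$ with $\mu=\alpha_1\cdots\alpha_m$ and $\eta=\beta_1\cdots\beta_n$, its image equals $\psi^1(\alpha_1)\cdots\psi^1(\alpha_m)\psi^1(\beta_n)^*\cdots\psi^1(\beta_1)^*$, whose weight under $w_E$ is
\[
w_E(\psi^1(\alpha_1))\cdots w_E(\psi^1(\alpha_m))\,w_E(\psi^1(\beta_n))^{-1}\cdots w_E(\psi^1(\beta_1))^{-1}=w_F(\mu\eta^*).
\]
Extending $R$-linearly, $\bar{\psi}$ carries $(C_R^Y(F))_s$ into $(C_R^X(E))_s$ for every $s\in S$, so $\bar{\psi}(H_{C_R^Y(F)})\subseteq H_{C_R^X(E)}$ and the degree of every nonzero homogeneous element is preserved.

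Finally, the second condition in the definition of a homogeneous homomorphism, namely that $\deg(\bar{\psi}(x))=\deg(\bar{\psi}(y))$ forces $\deg(x)=\deg(y)$ whenever $\bar{\psi}(x)$ and $\bar{\psi}(y)$ are nonzero, is automatic once degrees are preserved. Thus $\bar{\psi}$ is a morphism in $S\operatorname{-RING}$. No substantial obstacle is anticipated; the essential ring-theoretic content is the Cuntz--Krieger verification already handled in \cite{dl}, and the remaining work is the routine bookkeeping that the weight-compatible extension of $w_F$ renders $\bar{\psi}$ degree-preserving on monomials, hence on all of $H_{C_R^Y(F)}$ by $R$-linearity.
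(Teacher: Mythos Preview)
Your proposal is correct and follows essentially the same approach as the paper: the paper invokes Lemma~4.9 in \cite{dl} to obtain the ring homomorphism $\bar{\psi}$ on generators, then observes that the compatibility conditions $w_E(\psi^0(v))=w_F(v)$ and $w_E(\psi^1(\alpha))=w_F(\alpha)$ make $\bar{\psi}$ a morphism in $S\operatorname{-RING}$. You simply unpack both steps in more detail, spelling out the verification of the Cuntz--Krieger relation via condition~$c)$ and the degree preservation on monomials, which the paper subsumes under ``clearly.''
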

Let $R$ be a unital ring. Inspired by Definition~4.10 in \cite{dl}, we define the 
$S$-\emph{Cohn path algebra functor}
$C_R: \mathcal{G}\to S\operatorname{-RING}$ by $(E,X)\mapsto C^X_R(E),$ and 
$\psi\mapsto\bar{\psi}$ for all objects $(E,X)$ of $\mathcal{G}$ and all morphisms
$\psi$ of $\mathcal{G}.$ Then, with the help of the previous lemma, one may prove 
that the following result holds by simply observing homogeneous homomorphisms
of $S$-graded rings instead of $\mathbb{Z}$-graded ring homomorphisms in the
proof of Lemma~4.11 in \cite{dl} (see also Proposition~1.6.4 in \cite{aasm}).
\begin{lemma}\label{lemmafdl}
The $S$-Cohn path algebra functor $C_R$ preserves direct limits.
\end{lemma}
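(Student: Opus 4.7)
The plan is to adapt the proof of Lemma~4.11 in \cite{dl} to the category $S\operatorname{-RING},$ replacing $\mathbb{Z}$-graded ring homomorphisms by homogeneous homomorphisms of $S$-graded rings. Concretely, consider a direct system $\{(E_i, X_i), \psi_{ij}\}_{i,j\in I}$ in $\mathcal{G}$ with direct limit $(E,X).$ For each $i,$ let $w_i$ be a weight mapping that makes $C_R^{X_i}(E_i)$ canonically $S$-graded, chosen compatibly so that for every bonding morphism $\psi_{ij}$ we have $w_j(\psi_{ij}^0(v)) = w_i(v)$ and $w_j(\psi_{ij}^1(\alpha)) = w_i(\alpha)$ for all $v\in E_i^0,$ $\alpha\in E_i^1.$ This is exactly the compatibility condition set up in the discussion preceding the previous lemma, and it guarantees that each $\bar{\psi}_{ij}\colon C_R^{X_i}(E_i)\to C_R^{X_j}(E_j)$ is a morphism in $S\operatorname{-RING}.$ The compatible family $\{w_i\}_{i\in I}$ then descends to a well-defined weight mapping $w$ on $E^*\cup\{\mu^*\ |\ \mu\in E^*\},$ making $C_R^X(E)$ into a canonically $S$-graded ring with respect to which each canonical morphism $\bar{\psi}_i\colon C_R^{X_i}(E_i)\to C_R^X(E)$ is homogeneous.

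Next, I would verify the universal property of $C_R^X(E)$ as a direct limit in $S\operatorname{-RING}.$ Given an object $A$ of $S\operatorname{-RING}$ together with a compatible family of homogeneous ring homomorphisms $f_i\colon C_R^{X_i}(E_i)\to A,$ the ungraded version of the statement, namely Lemma~4.11 in \cite{dl} (cf.~Proposition~1.6.4 in \cite{aasm}), supplies a unique ring homomorphism $f\colon C_R^X(E)\to A$ with $f\circ\bar{\psi}_i = f_i$ for every $i.$ What remains is to verify that $f$ is homogeneous in the sense of the category $S\operatorname{-RING}.$ Since every element of $C_R^X(E)$ involves only finitely many vertices and edges, all of which lie in some $E_i,$ every homogeneous element of $C_R^X(E)$ lies in the image of some $\bar{\psi}_i,$ so $f$ sends it to a homogeneous element of $A$ of the same $S$-degree (using the compatibility of the $w_i$ and the cancellativity of $S$). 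Consequently $f(H_{C_R^X(E)})\subseteq H_A$ and the degree-matching condition is satisfied, giving $f\in S\operatorname{-RING}.$

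The main obstacle is the bookkeeping surrounding the weight mappings: one must verify that the choice of $w$ on the colimit graph is unambiguous and that the resulting $S$-homogeneous decomposition of $C_R^X(E)$ agrees with $\bigoplus_{s\in S}\underrightarrow{\lim}_i(C_R^{X_i}(E_i))_s.$ This relies on the fact that each $s$-component of $C_R^X(E)$ is the $R$-linear span of monomials $\mu\eta^*$ with $w(\mu\eta^*)=s,$ together with the observation that any such monomial eventually appears in $\bar{\psi}_i(C_R^{X_i}(E_i))$ for some sufficiently large $i,$ in which its degree is preserved by construction. Once this compatibility is in place, the remainder of the argument is a routine transcription of the $\mathbb{Z}$-graded proof, where all occurrences of ``$\mathbb{Z}$-graded ring homomorphism'' are replaced by ``morphism in $S\operatorname{-RING}.$''
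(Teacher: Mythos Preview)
Your proposal is correct and follows exactly the route the paper indicates: the paper does not give a detailed proof but merely says the result follows ``by simply observing homogeneous homomorphisms of $S$-graded rings instead of $\mathbb{Z}$-graded ring homomorphisms in the proof of Lemma~4.11 in \cite{dl} (see also Proposition~1.6.4 in \cite{aasm}),'' which is precisely what you carry out. Your additional bookkeeping on the compatibility of the weight mappings and the verification that the universal morphism $f$ is homogeneous in $S\operatorname{-RING}$ fills in the details the paper leaves implicit.
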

\begin{proposition}\label{propositionm}
Let $R$ be a unital ring and let $E$ be a directed graph. If $R$ is von Neumann
regular, then $L_R(E)$ is graded von Neumann regular as an $S$-graded
ring.
\end{proposition}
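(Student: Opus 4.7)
The plan is to reduce the general case to Theorem~\ref{fdg} by realizing $L_R(E)$ as a direct limit, in the category $S\operatorname{-RING}$, of Leavitt path algebras of finite graphs, and then to invoke Lemma~\ref{lemmadlvnr}. This mirrors the strategy of \cite{rh} and \cite{dl}.

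First, I would form a directed system $\{(F_i, X_i)\}_{i\in I}$ in $\mathcal{G}$ whose direct limit is $(E, \mathrm{Reg}(E))$; a convenient choice is to let $F_i$ range over the finite subgraphs of $E$ and to set $X_i$ to be the set of those $v \in F_i^0 \cap \mathrm{Reg}(E)$ all of whose outgoing $E$-edges already lie in $F_i^1$, so that each inclusion $(F_i, X_i) \hookrightarrow (E, \mathrm{Reg}(E))$ is a complete morphism in $\mathcal{G}$. Applying the $S$-Cohn path algebra functor $C_R$ and invoking Lemma~\ref{lemmafdl}, I obtain
\[
L_R(E) \;=\; C_R^{\mathrm{Reg}(E)}(E) \;\cong\; \underrightarrow{\lim}_{\,i}\, C_R^{X_i}(F_i)
\]
as $S$-graded rings.

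Next, by Proposition~\ref{propositiongr}, each $C_R^{X_i}(F_i)$ is graded isomorphic to $L_R(F_i(X_i))$ as an $S$-graded ring. Since $F_i$ is finite and $\mathrm{Reg}(F_i)\setminus X_i$ is finite, the graph $F_i(X_i)$ is finite, and we may without loss of generality discard indices for which $F_i(X_i)$ is null (the statement is trivial if $E$ itself is null). Because $R$ is von Neumann regular, Theorem~\ref{fdg} applies to each $L_R(F_i(X_i))$, so each $C_R^{X_i}(F_i)$ is graded von Neumann regular as a canonically $S$-graded ring. Lemma~\ref{lemmadlvnr} then transports this property across the direct limit, yielding that $L_R(E)$ is graded von Neumann regular.

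The hard part, and really the only subtle bookkeeping, is verifying that the weight mapping $w$ on $E$ restricts compatibly to each $F_i$ and extends, via the recipe preceding Proposition~\ref{propositiongr}, to a weight mapping on $F_i(X_i)$, in such a way that the graded isomorphisms $C_R^{X_i}(F_i)\cong L_R(F_i(X_i))$ and the transition maps of the directed system assemble into a direct system in $S\operatorname{-RING}$ whose colimit is precisely the $S$-graded ring $L_R(E)$. Once this compatibility of weights along the system is confirmed, the three ingredients (the finite-graph case, the functorial description of the Cohn algebras, and the direct-limit stability of graded von Neumann regularity) combine immediately to finish the proof.
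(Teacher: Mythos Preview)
Your proposal is correct and follows essentially the same route as the paper's proof: realize $(E,\mathrm{Reg}(E))$ as a direct limit of pairs $(F_i,X_i)$ with $F_i$ finite (the paper cites Lemma~1.6.9 of \cite{aasm} rather than spelling out the system), apply Lemma~\ref{lemmafdl} and Proposition~\ref{propositiongr} to identify $L_R(E)$ with $\underrightarrow{\lim}_i L_R(F_i(X_i))$ in $S\operatorname{-RING}$, invoke Theorem~\ref{fdg} on each term, and conclude via Lemma~\ref{lemmadlvnr}. Your closing remark about checking compatibility of the weight mappings along the system is a point the paper handles implicitly through its setup of the $S$-Cohn path algebra functor.
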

\begin{proof}
Of course, if $E$ is the null graph, then $L_R(E)$ is $S$-graded 
von Neumann regular. So, let $E$ be distinct from the null graph. According to
Lemma~1.6.9 in \cite{aasm}, there exists a direct system $\{(F_i,Y_i)\ |\ i\in I\},$
where each $F_i$ is a finite graph, direct limit of which is $(E,\mathrm{Reg}(E)).$
By Lemma~\ref{lemmafdl}, we get that $L_R(E)$ and 
$\underrightarrow{\lim}_iC_R^{Y_i}(F_i)$ are graded isomorphic as $S$-graded 
rings. However, by Proposition~\ref{propositiongr}, each $C_R^{Y_i}(F_i)$ is graded 
isomorphic as an $S$-graded ring to $L_R(F_i(Y_i)).$ Therefore, 
$L_R(E)$ and $\underrightarrow{\lim}_iL_R(F_i(Y_i))$ are 
graded isomorphic as $S$-graded rings (cf.~Proposition~4.12 in \cite{dl} and 
Corollary~1.6.11 in \cite{aasm}). On the other hand, since the graphs $F_i$ are 
finite, by Theorem~\ref{fdg}, each $L_R(F_i(Y_i))$ is graded von 
Neumann regular as an $S$-graded ring. Therefore, $L_R(E)$ is indeed 
graded von Neumann regular as an $S$-graded ring by Lemma~\ref{lemmadlvnr}.
\end{proof}
We may now prove Theorem~\ref{theoremmain}.
\begin{proof}[Proof of Theorem~\ref{theoremmain}]
$(\Rightarrow)$ According to the `only if' part of Theorem~\ref{theoremm}, we 
have that $\left(L_R(E)\right)_e$ is a von Neumann regular ring for every 
idempotent element $e\in S.$ Hence, $R$ is von Neumann regular by 
Lemma~\ref{lemmavnr}.\\
\indent $(\Leftarrow)$ If $R$ is a von Neumann regular ring, then 
Proposition~\ref{propositionm} implies that $L_R(E)$ is graded von
Neumann regular as an $S$-graded ring.
\end{proof}
Finally, we list some properties of $S$-graded Leavitt path algebras which are
consequences of Theorem~\ref{theoremmain}, based on the already established
results on groupoid graded von Neumann regular rings.
\begin{theorem}\label{theoremgs}
Let $R$ be a unital von Neumann regular ring, $E$ a directed graph, and observe
$L_R(E)$ as a canonically $S$-graded ring. Then:
\begin{itemize}
\item[$a)$] Every principal right (left) homogeneous ideal of $L_R(E)$ is 
generated by a homogeneous idempotent element;
\item[$b)$] Let $I$ be a right (left) ideal of $L_R(E)$ which is generated by finitely 
many homogeneous elements, say $\{x_1,\dots,x_n\},$ such that for all 
$i\in\{1,\dots,n\}$ we have that $\deg(x_i)(\deg(x_i))^{-1}=e,$ for some 
$e\in I(S)^\times.$ Then $I$ is generated by a homogeneous idempotent element;
\item[$c)$] Every homogeneous right (left) ideal of $L_R(E)$ is 
idempotent;
\item[$d)$] Every two-sided homogeneous ideal of $L_R(E)$ is graded 
semiprime;
\item[$e)$] $J^g(L_R(E))=0.$
\end{itemize}
\end{theorem}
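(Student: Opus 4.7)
The plan is to recognize this theorem as a direct synthesis of two facts that have already been established for the canonically $S$-graded Leavitt path algebra, and then simply invoke the general machinery developed in Section~\ref{gradedvnr}. First, because $R$ is unital and von Neumann regular and $E$ is an arbitrary (possibly null) directed graph, Theorem~\ref{theoremmain} yields that $L_R(E)$ is graded von Neumann regular as a canonically $S$-graded ring (in the null-graph case, $L_R(E)=0$ and every claim is trivial, so I would dispose of that case at the start). Second, Theorem~\ref{theoremnesg} tells us that $L_R(E)$ is nearly epsilon-strongly $S$-graded, with no assumption on $R$ needed. Together, these place $L_R(E)$ into the hypothesis of both Proposition~\ref{propositionvnrg} and Proposition~\ref{propositiongs}.

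With that setup, the five items follow by direct citation. For part $a)$, apply Proposition~\ref{propositionvnrg}$(i)\Rightarrow(ii)$ to the graded von Neumann regular, nearly epsilon-strongly graded ring $L_R(E)$. For part $b)$, apply Proposition~\ref{propositionvnrg}$(i)\Rightarrow(iii)$ in the same way; the hypothesis that $\deg(x_i)(\deg(x_i))^{-1}=e$ is available because Theorem~\ref{theoremnesg} guarantees that $S$ satisfies (LRI), so the element $s^{-1}$ is well defined. Parts $c)$ and $d)$ are exactly Proposition~\ref{propositiongs}$(i)$ and $(ii)$, which require only graded von Neumann regularity. Finally, part $e)$ is Proposition~\ref{propositiongs}$(iii)$, which additionally requires the nearly epsilon-strongly graded hypothesis already verified.

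There is essentially no obstacle in this argument; the work has all been done in the preceding sections, and what remains is purely a matter of bookkeeping. The only mildly delicate point is to keep track of the fact that Proposition~\ref{propositiongs}$(iii)$ needs \emph{both} graded von Neumann regularity and the nearly epsilon-strong grading structure, and both are now available simultaneously only because we have combined Theorem~\ref{theoremmain} with Theorem~\ref{theoremnesg}. I would therefore structure the proof as a single short paragraph that first records these two facts and then cites the appropriate item of Proposition~\ref{propositionvnrg} or Proposition~\ref{propositiongs} for each of $a)$ through $e)$, taking care to mention the (LRI) property of $S$ once, at the point where it is first used.
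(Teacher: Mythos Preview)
Your proposal is correct and follows essentially the same approach as the paper: invoke Theorem~\ref{theoremmain} for graded von Neumann regularity and Theorem~\ref{theoremnesg} for the nearly epsilon-strong grading, then read off $a)$--$e)$ from Proposition~\ref{propositionvnrg} and Proposition~\ref{propositiongs}. Your explicit disposal of the null-graph case is a small nicety the paper glosses over (relying implicitly on Proposition~\ref{propositionm}), but otherwise the two arguments coincide.
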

\begin{proof}
Since $R$ is unital and von Neumann regular, Theorem~\ref{theoremmain} implies
that $L_R(E)$ is graded von Neumann regular as an $S$-graded ring.
Therefore, $c)$ and $d)$ follow by $i)$ and $ii)$ of Proposition~\ref{propositiongs},
respectively. Now, the Leavitt path algebra $L_R(E)$ is nearly 
epsilon-strongly graded as an $S$-graded ring according to 
Theorem~\ref{theoremnesg}. Hence, the assertions $a)$ and $b)$ follow by
Proposition~\ref{propositionvnrg}, and $e)$ follows by 
Proposition~\ref{propositiongs}$iii).$
\end{proof}
\bibliographystyle{amsplain}

\flushleft\small{Emil Ili\'{c}-Georgijevi\'{c}\\
University of Sarajevo\\Faculty of Civil Engineering\\
Patriotske lige 30, 71000 Sarajevo, Bosnia and Herzegovina\\
e-mail: emil.ilic.georgijevic@gmail.com\\
ORCID ID: 0000-0002-4667-8322
\end{document}